\theoremstyle{plain}
\newtheorem{theorem}{Theorem}[section]
\newtheorem{proposition}[theorem]{Proposition}
\newtheorem{corollary}[theorem]{Corollary}
\newtheorem{lemma}[theorem]{Lemma}
\theoremstyle{definition}
\newtheorem{remark}[theorem]{Remark}
\newtheorem{question}[theorem]{Question}
\newtheorem{example}[theorem]{Example}
\newcommand{\abs}[1]{\lvert#1\rvert}
\newcommand{\norm}[1]{\lVert#1\rVert}
\newcommand{\bigabs}[1]{\bigl\lvert#1\bigr\rvert}
\newcommand{\bignorm}[1]{\bigl\lVert#1\bigr\rVert}
\newcommand{\Bignorm}[1]{\Bigl\lVert#1\Bigr\rVert}
\newcommand{\term}[1]{{\textit{\textbf{#1}}}}   
\renewcommand{\mid}{\::\:}
\newcommand{\goeso}{\xrightarrow{\mathrm{o}}}	
\newcommand{\goesun}{\xrightarrow{\mathrm{un}}} 
\newcommand{\goesuo}{\xrightarrow{\mathrm{uo}}}	
\newcommand{\goesnorm}{\xrightarrow{\norm{\cdot}}}	
\newcommand{\goesmu}{\xrightarrow{\mu}}	
\newcommand{\goesw}{\xrightarrow{\mathrm{w}}}	
\newcommand{\goesws}{\xrightarrow{\mathrm{w}^*}}	
\newcommand{\goesae}{\xrightarrow{\mathrm{a.e.}}}	
\DeclareSymbolFont{bbold}{U}{bbold}{m}{n}
\DeclareSymbolFontAlphabet{\mathbbold}{bbold}
\def\one{\mathbbold{1}}
\DeclareMathOperator{\Range}{Range}
\DeclareMathOperator{\Span}{span}
\DeclareMathOperator{\supp}{supp}
\renewcommand{\le}{\leqslant}
\renewcommand{\ge}{\geqslant}
\begin{document}

\title[Unbounded norm topology]
{Unbounded Norm Topology\\ in Banach Lattices}

\author{M. Kandi\'c}
\address{Faculty of Mathematics and Physics, University of Ljubljana,
  Jadranska 19, 1000 Ljubljana, Slovenia}
\email{marko.kandic@fmf.uni-lj.si}

\author{M.A.A. Marabeh} \address{Department of Mathematics, Middle East Technical University, 06800 Ankara, Turkey.}
\email{mohammad.marabeh@metu.edu.tr, m.maraabeh@gmail.com}

\author{V.G. Troitsky}
\address{Department of Mathematical and Statistical Sciences,
         University of Alberta, Edmonton, AB, T6G\,2G1, Canada.}
\email{troitsky@ualberta.ca}

\thanks{The first author was supported in part by grant P1-0222 of
  Slovenian Research Agency. The second author was supported by Middle
  East Technical University grant number BAP-01-01-2016-001.  The
  third author was supported by an NSERC grant.}

\keywords{Banach lattice,
  un-convergence, uo-convergence, un-topology}
\subjclass[2010]{Primary: 46B42. Secondary: 46A40}

\date{\today}

\begin{abstract}
  A net $(x_\alpha)$ in a Banach lattice $X$ is said to un-converge to
  a vector $x$ if $\bignorm{\abs{x_\alpha-x}\wedge u}\to 0$ for every
  $u\in X_+$. In this paper, we investigate un-topology, i.e., the
  topology that corresponds to un-convergence. We show that
  un-topology agrees with the norm topology iff $X$ has a strong unit.
  Un-topology is  metrizable iff $X$ has a quasi-interior
  point. Suppose that $X$ is order continuous, then un-topology is locally
  convex iff $X$ is atomic. An order continuous Banach lattice $X$ is
  a KB-space iff its closed unit ball $B_X$ is un-complete. For a Banach
  lattice $X$, $B_X$ is un-compact iff $X$ is an atomic KB-space. We
  also study un-compact operators and the relationship between
  un-convergence and weak*-convergence.
\end{abstract}

\maketitle

\section{Introduction and preliminaries}

For a net $(x_\alpha)$ in a vector lattice $X$, we write
$x_\alpha\goeso x$ if $(x_\alpha)$ \term{converges} to $x$ \term{in
  order}. That is, there is a net $(u_\gamma)$, possibly over a
different index set, such that $u_\gamma\downarrow 0$ and for every
$\gamma$ there exists $\alpha_0$ such that
$\abs{x_\alpha-x}\le u_\gamma$ whenever $\alpha\ge\alpha_0$. We write
$x_\alpha\goesuo x$ and say that $(x_\alpha)$ \term{uo-converges} to
$x$ if $\abs{x_\alpha-x}\wedge u\goeso 0$ for every $u\in X_+$; ``uo''
stands for ``unbounded order''. For a net $(x_\alpha)$ in a normed
lattice $X$, we write $x_\alpha\goesnorm x$ if $(x_\alpha)$ converges
to $x$ in norm. We write $x_\alpha\goesun x$ and say that $(x_\alpha)$
\term{un-converges} to $x$ if $\abs{x_\alpha-x}\wedge u\goesnorm 0$
for every $u\in X_+$; ``un'' stands for ``unbounded norm''.

A variant of uo-convergence was originally introduced in
\cite{Nakano:48}, while the term ``uo-convergence'' was first coined
in \cite{DeMarr:64}. Relationships between uo, weak, and weak*
convergences were investigated in
\cite{Wickstead:77,GaoX:14,Gao:14}. Relationships between
uo-convergence and almost everywhere convergence were investigated and
applied in \cite{GaoX:14,Emelyanov:16,GTX}. We refer the
reader to \cite{GTX} for a further review of properties of
uo-convergence. Un-convergence was introduced in \cite{Troitsky:04}
and further investigated in \cite{DOT}. For unexplained terminology on
vector and Banach lattices we refer the reader to
\cite{Abramovich:02,Aliprantis:06}. All vector lattices are assumed to
be Archimedean.

Let us start by briefly going over some of the known properties of
these modes of convergence; we refer the reader to \cite{GTX,DOT} for
details. Both uo-convergence and un-convergence respect linear and
lattice operations; limits are unique. In particular,
$x_\alpha\goesuo x$ iff $\abs{x_\alpha-x}\goesuo 0$; similarly,
$x_\alpha\goesun x$ iff $\abs{x_\alpha-x}\goesun 0$. For order bounded
nets, uo-convergence agrees with order convergence while
un-convergence agrees with norm convergence. It follows that order
intervals are uo- and un-closed. For sequences in $L_p(\mu)$, where
$1\le p<\infty$ and $\mu$ is a finite measure, it is easy to see that
uo-convergence agrees with convergence almost everywhere, see, e.g.,
\cite[Example~2]{DeMarr:64}. Under the same assumptions,
un-convergence agrees with convergence in measure, see
\cite[Example~23]{Troitsky:04}. We write $L_p$ for $L_p[0,1]$.

Suppose that $X$ is a vector lattice. By \cite[Corollary~3.6]{GTX},
every disjoint sequence in $X$ is uo-null. Recall that a sublattice
$Y$ of $X$ is \term{regular} if the inclusion map preserves suprema
and infima of arbitrary subsets.  It was shown
in~\cite[Theorem~3.2]{GTX} that uo-convergence is stable under passing
to and from regular sublattices. That is, if $(y_\alpha)$ is a net in
a regular sublattice $Y$ of $X$ then $y_\alpha\goesuo 0$ in $Y$ iff
$y_\alpha\goesuo 0$ in $X$ (in fact, this property characterizes
regular sublattices). 

It is clear that if $X$ is an order continuous normed lattice then
uo-convergence implies un-convergence. Let $X$ be a Banach lattice and
$(x_n)$ a un-null sequence in $X$. Then $(x_n)$ has a uo-null
subsequence by Proposition~4.1 of~\cite{DOT}. A disjoint sequence need
not be un-null. For example, the standard unit sequence $(e_n)$ in
$\ell_\infty$ is not un-null. However, a un-null sequence has an
asymptotically disjoint subsequence. More precisely, we have the
following.

\begin{theorem}(\cite[Theorem~3.2]{DOT})\label{KP}
  Let $(x_\alpha)$ be a un-null net. There is an increasing sequence
  of indices $(\alpha_k)$ and a disjoint sequence $(d_k)$ such that
  $x_{\alpha_k}-d_k\goesnorm 0$.
\end{theorem}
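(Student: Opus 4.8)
The plan is to reduce to a positive net and then produce the disjoint sequence by explicitly subtracting off \emph{all} the pairwise overlaps.

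First, since $x_\alpha\goesun 0$ if and only if $\abs{x_\alpha}\goesun 0$, it suffices to treat a positive un-null net and to find an increasing sequence $(\alpha_k)$ together with a \emph{positive} disjoint sequence $(e_k)$ with $\norm{x_{\alpha_k}-e_k}\to 0$ (where now $(x_\alpha)$ denotes the positive net $(\abs{x_\alpha})$). Indeed, once this is done for $(\abs{x_\alpha})$, one recovers the general case by setting $d_k:=x_{\alpha_k}^+\wedge e_k-x_{\alpha_k}^-\wedge e_k$: using $x_{\alpha_k}^+\wedge x_{\alpha_k}^-=0$ one checks that $\abs{d_k}=\abs{x_{\alpha_k}}\wedge e_k\le e_k$, so $(d_k)$ inherits disjointness from $(e_k)$, and $\abs{x_{\alpha_k}-d_k}=\bigl(\abs{x_{\alpha_k}}-e_k\bigr)^+$, so $\norm{x_{\alpha_k}-d_k}\le\bignorm{\abs{x_{\alpha_k}}-e_k}\to 0$. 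So from now on the net $(x_\alpha)$ is assumed positive.

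Second, I use un-nullity to drive the overlaps to zero. Since $x_\alpha\goesun 0$ gives $\norm{x_\alpha\wedge u}\to 0$ for every $u\in X_+$, I can choose $\alpha_1\le\alpha_2\le\cdots$ recursively so that, having picked $\alpha_1,\dots,\alpha_{k-1}$, the next index satisfies $\bignorm{x_{\alpha_k}\wedge(x_{\alpha_1}+\dots+x_{\alpha_{k-1}})}<4^{-k}$ (apply the displayed fact with $u=x_{\alpha_1}+\dots+x_{\alpha_{k-1}}$ and use that the index set is directed). Then $\norm{x_{\alpha_k}\wedge x_{\alpha_j}}<4^{-k}$ for all $j<k$, and consequently $\norm{x_{\alpha_k}\wedge x_{\alpha_j}}<4^{-j}$ whenever $j>k$. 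Hence $\eta_k:=\sum_{j\ne k}\norm{x_{\alpha_k}\wedge x_{\alpha_j}}<(k-1)4^{-k}+4^{-k}/3\to 0$, and in particular the series $t_k:=\sum_{j\ne k}x_{\alpha_k}\wedge x_{\alpha_j}$ converges in norm; write $t_k$ for its sum, so $t_k\in X_+$ and $\norm{t_k}\le\eta_k$.

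Third, set $e_k:=(x_{\alpha_k}-t_k)^+$. For $j\ne k$ we have $t_k\ge x_{\alpha_k}\wedge x_{\alpha_j}$, so $x_{\alpha_k}-t_k\le x_{\alpha_k}-x_{\alpha_k}\wedge x_{\alpha_j}=(x_{\alpha_k}-x_{\alpha_j})^+$, and since the positive part is monotone, $e_k\le(x_{\alpha_k}-x_{\alpha_j})^+$; symmetrically $e_j\le(x_{\alpha_j}-x_{\alpha_k})^+=(x_{\alpha_k}-x_{\alpha_j})^-$. As $(x_{\alpha_k}-x_{\alpha_j})^+\wedge(x_{\alpha_k}-x_{\alpha_j})^-=0$, the sequence $(e_k)$ is disjoint. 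Finally $x_{\alpha_k}-t_k\le e_k\le x_{\alpha_k}$ (the right inequality because $t_k\ge 0$), so $0\le x_{\alpha_k}-e_k\le t_k$ and $\norm{x_{\alpha_k}-e_k}\le\norm{t_k}\le\eta_k\to 0$. Together with the first paragraph this finishes the proof.

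I expect the only genuine obstacle is extracting an \emph{honestly} disjoint sequence, not merely an asymptotically disjoint one, in an arbitrary Banach lattice: the naive idea of splitting off the band component of $x_{\alpha_k}$ disjoint from its predecessors is unavailable, since there need be no band projections, and even in the Dedekind completion the relevant order-null sequences need not be norm-null. What resolves this is that un-nullity lets us make the \emph{total} overlap $\sum_{j\ne k}\norm{x_{\alpha_k}\wedge x_{\alpha_j}}$ finite (indeed tending to $0$), so the infinite correction $t_k$ is a legitimate norm-convergent element of $X$, and subtracting it and passing to the positive part kills every pairwise overlap simultaneously.
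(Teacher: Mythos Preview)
The paper does not prove this statement; it merely quotes it as \cite[Theorem~3.2]{DOT}. Your argument is correct and is essentially the standard Kadec--Pe{\l}czy\'nski disjointification scheme that underlies the proof in~\cite{DOT}: pick indices so that each new term has small overlap with the sum of its predecessors, then subtract the overlaps and take positive parts to obtain genuine disjointness. Your version is a mild variant in that you sum the overlaps over \emph{all} $j\ne k$ (past and future) rather than just over $j<k$; this costs nothing extra and makes the disjointness verification symmetric and transparent. The reduction to the positive case via $d_k=x_{\alpha_k}^+\wedge e_k-x_{\alpha_k}^-\wedge e_k$ is also handled cleanly; the identities you use (e.g., $(a+b)\wedge c=a\wedge c+b\wedge c$ for $a\perp b$) are standard.
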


While uo-convergence need not be given by a topology, it was observed
in \cite{DOT} that un-convergence is topological. For every
$\varepsilon>0$ and non-zero $u\in X_+$, put
\begin{displaymath}
  V_{\varepsilon,u}=\bigl\{x\in X\mid \bignorm{\abs{x}\wedge u}<\varepsilon\bigr\}.
\end{displaymath}
The collection of all sets of this form is a base of zero
neighborhoods for a topology, and the convergence in this topology
agrees with un-convergence. We will refer to this topology
as \emph{un-topology}.

Every time a new linear topology is discovered, one is expected to ask
several natural questions: is this topology metrizable? Is it
locally-convex?  Complete? Can one characterize (relatively) compact
sets? Is this topology stronger or weaker than other known topologies?
In this paper, we study these and similar questions for
un-topology. In other words, our motivation for this paper is to
investigate topological properties of un-topology.

Throughout this paper, $X$ will be assumed to be a Banach
lattice, unless specified otherwise. We write $B_X$ for the closed
unit ball of $X$.  It was observed in~\cite{DOT} that
$x_\alpha\goesun x$ implies $\norm{x}\le\liminf\norm{x_\alpha}$. This
yields that $B_X$ is un-closed.

The following facts will be used throughout the paper.

\begin{lemma}\label{monot}
  \begin{enumerate}
  \item\label{monot-uo} If $(x_\alpha)$ is an increasing net in a
    vector lattice $X$ and $x_\alpha\goesuo x$ then
    $x_\alpha\uparrow x$;
  \item\label{monot-un} If $(x_\alpha)$ is an increasing net in a
    normed lattice $X$ and $x_\alpha\goesun x$ then
    $x_\alpha\uparrow x$ and $x_\alpha\goesnorm x$.
  \end{enumerate}
\end{lemma}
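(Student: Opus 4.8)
The plan is to prove both statements by one and the same argument, replacing uo\nobreakdash-convergence by un\nobreakdash-convergence in the second case; the only facts used are that uo\nobreakdash- and un\nobreakdash-convergence respect the lattice operations, that their limits are unique (both recalled above), and that an eventually-constant net converges, in either sense, to its eventual value. For the first statement I would first check that $x$ is an upper bound of $(x_\alpha)$. Fix an index $\beta$. Since $\wedge$ is uo\nobreakdash-continuous, $x_\alpha\wedge x_\beta\goesuo x\wedge x_\beta$ with $\alpha$ running over the whole index set; but for $\alpha\ge\beta$ we have $x_\alpha\ge x_\beta$, hence $x_\alpha\wedge x_\beta=x_\beta$, so this net is eventually constant and uo\nobreakdash-converges to $x_\beta$. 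Uniqueness of uo\nobreakdash-limits forces $x\wedge x_\beta=x_\beta$, i.e.\ $x\ge x_\beta$; since $\beta$ was arbitrary, $x$ is an upper bound. To see it is the least one, let $y$ be any upper bound of $(x_\alpha)$: then $x_\alpha\vee y=y$ for all $\alpha$, so $x_\alpha\vee y\goesuo y$, while also $x_\alpha\vee y\goesuo x\vee y$ by uo\nobreakdash-continuity of $\vee$; uniqueness gives $x\vee y=y$, that is $x\le y$. Hence $x=\sup_\alpha x_\alpha$, i.e.\ $x_\alpha\uparrow x$.

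For the second statement, running the previous paragraph verbatim with un\nobreakdash-convergence in place of uo\nobreakdash-convergence (the relevant eventually-constant nets now even converge in norm, hence un\nobreakdash-converge to their limits) gives $x_\alpha\uparrow x$. To upgrade this to norm convergence, fix an index $\alpha_0$ and put $u:=x-x_{\alpha_0}\in X_+$. For every $\alpha\ge\alpha_0$ we have $0\le x-x_\alpha\le x-x_{\alpha_0}=u$, so that $\abs{x_\alpha-x}\wedge u=(x-x_\alpha)\wedge u=x-x_\alpha$ and therefore $\norm{x-x_\alpha}=\bignorm{\abs{x_\alpha-x}\wedge u}$. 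Since $x_\alpha\goesun x$, the right-hand side tends to $0$ along the net, and hence so does the left-hand side along the tail $\alpha\ge\alpha_0$; that is, $x_\alpha\goesnorm x$.

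I do not expect a genuine obstacle. The only care needed is bookkeeping: the auxiliary nets $(x_\alpha\wedge x_\beta)_\alpha$ and $(x_\alpha\vee y)_\alpha$ are indexed by the original directed set, so ``eventually constant'' is meant relative to that set, and directedness of the index set is used to combine the fixed index $\beta$ (or $\alpha_0$) with the index supplied by the definition of convergence. With this understood, all verifications are routine.
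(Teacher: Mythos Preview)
Your proof is correct. It differs from the paper's argument in a pleasant way: the paper first reduces to the positive case, then for each $z\in X_+$ passes from $x_\alpha\wedge z\goesuo x\wedge z$ (respectively $\goesun$) to order (respectively norm) convergence via the standing fact that uo/un agree with o/norm on order bounded nets, and finally plugs in $z=x_\alpha$ to bound the net by $x$. You bypass all of this by using uniqueness of limits twice---once with $x_\alpha\wedge x_\beta$ to get $x_\beta\le x$, once with $x_\alpha\vee y$ to get $x\le y$---which yields $x_\alpha\uparrow x$ directly without the positivity reduction or any appeal to ``order bounded uo $=$ o''. For the norm convergence in~(ii) the paper again invokes ``order bounded un $=$ norm'' wholesale, whereas your choice $u=x-x_{\alpha_0}$ reproduces exactly that implication by hand for this particular net. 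Your route is slightly more self-contained; the paper's route illustrates how the general reduction principles do the work.
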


\begin{proof}
  Without loss of generality, $x_\alpha\ge 0$ for all $\alpha$;
  otherwise, pick any index $\alpha_0$ and consider the net
  $(x_\alpha-x_{\alpha_0})_{\alpha\ge\alpha_0}$, which converges to
  $x-x_{\alpha_0}$. Since lattice operations are uo- and un-continuous,
  we have $x\ge 0$.

  \eqref{monot-uo} Take any $z\in X_+$. It follows from uo-continuity
  of lattice operations that $x_\alpha\wedge z\goesuo x\wedge z$.
  Since the net $(x_\alpha\wedge z)$ is order bounded and increasing,
  this yields $x_\alpha\wedge z\goeso x\wedge z$ and, therefore
  $x_\alpha\wedge z\uparrow x\wedge z$. It follows that
  $x_\alpha\wedge z\le x$ for every $\alpha$ and every $z\in
  X_+$. Applying this with $z=x_\alpha$ we get
  $x_\alpha\le x$. Thus, the net $(x_\alpha)$ is order bounded and,
  therefore, $x_\alpha\goeso x$, hence $x_\alpha\uparrow x$.

  \eqref{monot-un} The proof is similar and uses the fact that every
  monotone norm convergent net converges in order to the same
  limit. We note that $x_\alpha\wedge z\goesnorm x\wedge z$ and,
  therefore,   $x_\alpha\wedge z\uparrow x\wedge z$ for every $z\in
  X_+$. It follows that the net $(x_\alpha)$ is order bounded, which
  yields $x_\alpha\goesnorm x$ and, therefore, $x_\alpha\uparrow x$.
\end{proof}

Recall that \cite[Question~2.14]{DOT} asks whether $x_\alpha\goesun 0$
implies that there exists an increasing sequence of indices
$(\alpha_k)$ such that $x_{\alpha_k}\goesun 0$. The following
counterexample was kindly provided to us by E.~Emelyanov.

\begin{example}\label{emelyanov}
  Let $\Omega$ be an uncountable set; let $X$ be the closed sublattice
  of $\ell_\infty(\Omega)$ consisting of all the functions with
  countable support. For $\omega\in\Omega$, we write $e_\omega$ for
  the characteristic function of $\{\omega\}$. 

  Let $\Lambda$ be the set of all countable subsets of $\Omega$,
  ordered by inclusion. For each $\alpha\in\Lambda$, pick any
  $\omega\notin\alpha$ and put $x_\alpha=e_\omega$. We claim that
  $x_\alpha\goesun 0$. Indeed, let $u\in X_+$; let $\alpha_0$ be the
  support of $u$. Then $x_\alpha\wedge u=0$ whenever
  $\alpha\ge\alpha_0$.

  On the other hand, let $(\omega_k)$ be any sequence in $\Omega$; we
  claim that the sequence $(e_{\omega_k})$ is not un-null. Indeed, put
  $\beta=\{\omega_k\mid k\in\mathbb N\}$ and let $u$ be the
  characteristic function of $\beta$. Then
  $e_{\omega_k}\wedge u=e_{\omega_k}$ for every $k$; hence it does not
  converge in norm to zero.

  In particular, if $(\alpha_k)$ is an increasing sequence of indices
  in $\Lambda$ then $(x_{\alpha_k})$ is not un-null.
\end{example}

Let $e\in X_+$. Recall that the band $B_e$ generated by $e$ is norm
closed and contains the principal ideal $I_e$; hence
$I_e\subseteq\overline{I_e}\subseteq B_e$. Recall also that
\begin{itemize}
\item $e$ is a \term{strong unit} when $I_e=X$; equivalently, for
  every $x\ge 0$ there exists $n\in\mathbb N$ such that $x\le ne$;
\item $e$ is a \term{quasi-interior point} if $\overline{I_e}=X$;
  equivalently, $x\wedge ne\goesnorm x$ for every $x\in X_+$;
\item $e$ is a \term{weak unit} if $B_e=X$;
  equivalently, $x\wedge ne\uparrow x$ for every $x\in X_+$.
\end{itemize}
In particular,
strong unit $\Rightarrow$ quasi-interior point 
$\Rightarrow$ weak unit.

\section{Strong units}

It is easy to see that each $V_{\varepsilon,u}$ is solid. It is also
absorbing, that is, for every $x\in X$ there exists $\lambda>0$ such
that $\lambda x\in V_{\varepsilon,u}$. The following lemma is a
dichotomy: it says that $V_{\varepsilon,u}$ is either ``very small''
or ``very large''.

\begin{lemma}\label{dichotomy}
  Let $\varepsilon>0$, and $0\ne u\in X_+$. Then $V_{\varepsilon,u}$
  is either contained in $[-u,u]$ or contains a non-trivial ideal.
\end{lemma}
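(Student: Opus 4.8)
The plan is as follows. If $V_{\varepsilon,u}\subseteq[-u,u]$ there is nothing to prove, so I would assume instead that some $x\in V_{\varepsilon,u}$ fails to lie in $[-u,u]$, that is, $\abs{x}\not\le u$. Since each $V_{\varepsilon,u}$ is solid, $\abs{x}\in V_{\varepsilon,u}$ as well, so after replacing $x$ by $\abs{x}$ I may assume I have $y\in X_+$ with $y\not\le u$ and $\bignorm{y\wedge u}<\varepsilon$. The claim I then want to prove is that the principal ideal $I_v$ generated by $v:=(y-u)_+$ is contained in $V_{\varepsilon,u}$; since $y\not\le u$ forces $v\ne 0$, this ideal is non-trivial, which is exactly what the lemma asks for.

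To get $I_v\subseteq V_{\varepsilon,u}$, I would again use solidity: it is enough to place every positive $z\in I_v$ in $V_{\varepsilon,u}$, and each such $z$ satisfies $z\le\lambda v$ for some $\lambda>0$, hence $z\wedge u\le\lambda v\wedge u$. So it suffices to show
\begin{displaymath}
  \bignorm{\lambda v\wedge u}<\varepsilon\qquad\text{for every }\lambda>0,
\end{displaymath}
and I propose to deduce this from the order estimate $\lambda v\wedge u\le y\wedge u$.

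For the estimate $\lambda(y-u)_+\wedge u\le y\wedge u$, I would write $u=(u-y)_+ + y\wedge u$ and apply the elementary inequality $p\wedge(q+r)\le (p\wedge q)+r$, valid whenever $r\ge 0$ (it follows from $p\wedge(q+r)-r=(p-r)\wedge q\le p\wedge q$). With $p=\lambda v$, $q=(u-y)_+$, $r=y\wedge u$ this yields
\begin{displaymath}
  \lambda v\wedge u\le\bigl(\lambda v\wedge(u-y)_+\bigr)+y\wedge u .
\end{displaymath}
Since $\lambda v=\lambda(y-u)_+$ lies in the band generated by $(y-u)_+$, which is disjoint from $(u-y)_+$, the first summand is $0$, and we are left with $\lambda v\wedge u\le y\wedge u$; consequently $\bignorm{\lambda v\wedge u}\le\bignorm{y\wedge u}<\varepsilon$, finishing the argument.

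The one real decision in this proof is the choice of the ideal, and I expect that to be the main obstacle. The obvious candidates do not work: the band of all vectors disjoint from $u$ lies inside every $V_{\varepsilon,u}$ but collapses to $\{0\}$ precisely when $u$ is a weak unit, while $I_{y\wedge u}$ need not sit inside $V_{\varepsilon,u}$ because $\lambda(y\wedge u)\wedge u$ can grow in norm as $\lambda\to\infty$. Taking $v=(y-u)_+$ avoids both difficulties, and because the computation uses only the disjointness of $(y-u)_+$ and $(u-y)_+$ rather than any band projection, the argument should go through in an arbitrary normed lattice, with no Dedekind completeness needed. One may also remark that the two alternatives are mutually exclusive, since an ideal contained in $[-u,u]$ must be $\{0\}$ by the Archimedean property.
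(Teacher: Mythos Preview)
Your proof is correct and follows essentially the same route as the paper's: both pick an $x\in V_{\varepsilon,u}\setminus[-u,u]$, replace it by its modulus, set the candidate generator equal to $(\,\abs{x}-u)^+$, and then verify that $\lambda(\,\abs{x}-u)^+\wedge u\le\abs{x}\wedge u$ for all $\lambda\ge 0$, whence solidity gives the principal ideal inside $V_{\varepsilon,u}$. The only difference is that the paper leaves the last inequality as ``an easy exercise,'' whereas you supply a clean justification via $u=(u-y)_++y\wedge u$ and the disjointness of $(y-u)_+$ and $(u-y)_+$.
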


\begin{proof}
  Suppose that $V_{\varepsilon,u}$ is not contained in $[-u,u]$. Then
  there exists $x\in V_{\varepsilon,u}$ such that
  $x\notin[-u,u]$. Replacing $x$ with $\abs{x}$, we may assume that
  $x>0$. Let $y=(x-u)^+$; then $y>0$. It is an easy exercise to show
  that
  \begin{math}
    (\lambda y)\wedge u\le x\wedge u
  \end{math}
  for every $\lambda\ge 0$; it follows that $\lambda y\in
  V_{\varepsilon,u}$. Since $V_{\varepsilon,u}$ is solid, it contains
  the principal ideal $I_y$.
\end{proof}

\begin{lemma}\label{V-bdd-su}
  If $V_{\varepsilon,u}$ is contained in $[-u,u]$ then $u$ is a strong unit.
\end{lemma}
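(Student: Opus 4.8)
The plan is to show directly that the principal ideal $I_u$ equals $X$. Fix $x\in X_+$; it suffices to produce $n\in\mathbb N$ with $x\le nu$, and then by definition $u$ is a strong unit.

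First I would observe that scaling $x$ down places it inside $V_{\varepsilon,u}$: since $(\lambda x)\wedge u\le\lambda x$ for $\lambda>0$, we have $\bignorm{(\lambda x)\wedge u}\le\lambda\norm{x}$, which is $<\varepsilon$ once $\lambda$ is small enough. (Equivalently, one may simply quote the remark preceding Lemma~\ref{dichotomy} that $V_{\varepsilon,u}$ is absorbing.) Hence $\lambda x\in V_{\varepsilon,u}$ for some $\lambda>0$.

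Now apply the hypothesis $V_{\varepsilon,u}\subseteq[-u,u]$: we get $\lambda x\in[-u,u]$, and since $x\ge 0$ this gives $\lambda x\le u$, so $x\le nu$ for any integer $n\ge 1/\lambda$. As $x\in X_+$ was arbitrary, $I_u=X$, i.e.\ $u$ is a strong unit. There is essentially no obstacle here; the only point needing (minimal) care is the passage from membership in $V_{\varepsilon,u}$ to the order bound $\lambda x\le u$, which is where positivity of $x$ is used.
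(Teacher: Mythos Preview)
Your proof is correct and follows essentially the same approach as the paper's: pick $x\in X_+$, use that $V_{\varepsilon,u}$ is absorbing to get $\lambda x\in V_{\varepsilon,u}\subseteq[-u,u]$, and conclude. The paper's version is terser (it omits the explicit verification of absorption and the choice of $n$), but the argument is identical.
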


\begin{proof}
  Let $x\in X_+$. There exists $\lambda>0$ such that $\lambda x\in
  V_{\varepsilon,u}$, hence $\lambda x\in [-u,u]$. It follows that $u$
  is a strong unit.
\end{proof}

Recall that if $e$ is a positive vector in $X$ then the principal
ideal $I_e$ equipped with the norm
\begin{displaymath}
  \norm{x}_e=\inf\bigl\{\lambda>0\mid\abs{x}\le\lambda e\bigr\}
\end{displaymath}
is lattice isometric to $C(K)$ for some compact Hausdorff space $K$,
with $e$ corresponding to the constant one function $\one$; see, e.g.,
Theorems~3.4 and~3.6 in~\cite{Abramovich:02}. If $e$ is a strong unit
in $X$ then $I_e=X$; it is easy to see that in this case
$\norm{\cdot}_e$ is equivalent to the original norm; it follows that $X$ is
lattice and norm isomorphic to $C(K)$.

It is easy to see that if $x_\alpha\goesnorm x$ then
$x_\alpha\goesun x$, so norm topology generally is stronger than
un-topology. 

\begin{theorem}\label{su}
  Let $X$ be a Banach lattice. The following are equivalent.
  \begin{enumerate}
  \item Un-topology agrees with norm topology;
  \item $X$ has a strong unit.
  \end{enumerate}
\end{theorem}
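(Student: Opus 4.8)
The plan is to prove both implications, with the forward direction being essentially trivial and the reverse direction being the substance.

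For (ii) $\Rightarrow$ (i): Suppose $X$ has a strong unit $e$. We already know the norm topology is stronger than the un-topology, so it suffices to show every norm-neighbourhood of zero contains some $V_{\varepsilon,u}$. Since $e$ is a strong unit, every $x \in X_+$ satisfies $x \le ne$ for some $n$, so in particular $\abs{x} \wedge ne = \abs{x}$ eventually; more to the point, $\norm{\cdot}_e$ is equivalent to $\norm{\cdot}$ by the remark preceding the theorem (as $X \cong C(K)$). Thus there is $M > 0$ with $\norm{x} \le M \norm{x}_e$ for all $x$. Now if $\bignorm{\abs{x} \wedge e} < \varepsilon$ for suitably small $\varepsilon$, I want to conclude $\norm{x}$ is small — but this is false in general ($x$ could be a huge multiple of a vector disjoint from nothing); so instead I should use Lemma~\ref{dichotomy} and Lemma~\ref{V-bdd-su} directly: since $e$ is a strong unit, $V_{\varepsilon,e}$ cannot contain a non-trivial ideal (else that ideal would be unbounded in $\norm{\cdot}_e$, but $V_{\varepsilon,e} \subseteq \{\norm{x}_e \le \text{something}\}$... ) — more cleanly, by the dichotomy $V_{\varepsilon,e}$ is either contained in $[-e,e]$ or contains a non-trivial ideal $I_y$; if it contains $I_y$ then $\lambda y \wedge e \to 0$ in norm is violated for $y$ with $\norm{y}_e$ bounded, contradiction. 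Hence $V_{\varepsilon,e} \subseteq [-e,e]$, which is norm-bounded, and scaling gives a base of norm-neighbourhoods.

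For (i) $\Rightarrow$ (ii): Assume the un-topology agrees with the norm topology. Then $B_X$, being a norm-neighbourhood of zero (up to scaling, the open ball is), must be a un-neighbourhood of zero, so there exist $\varepsilon > 0$ and $0 \ne u \in X_+$ with $V_{\varepsilon,u} \subseteq B_X$, or more precisely with $V_{\varepsilon,u} \subseteq rB_X$ for some $r$. In particular $V_{\varepsilon,u}$ is norm-bounded, hence cannot contain a non-trivial ideal $I_y$ (ideals are never norm-bounded since they contain $\lambda y$ for all $\lambda$). By Lemma~\ref{dichotomy}, $V_{\varepsilon,u} \subseteq [-u,u]$, and then Lemma~\ref{V-bdd-su} immediately gives that $u$ is a strong unit.

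The main obstacle is organizational rather than deep: making sure the equivalence of $\norm{\cdot}_e$ with $\norm{\cdot}$ and the scaling arguments are invoked in the right order, and correctly observing that a non-trivial ideal is never norm-bounded (which is what lets us rule out the ``large'' alternative in the dichotomy). The heavy lifting has already been done in Lemmas~\ref{dichotomy} and~\ref{V-bdd-su}; the proof of the theorem is really just assembling these two lemmas with the trivial observation that norm topology dominates un-topology and the fact that a strong unit makes $X \cong C(K)$ with equivalent norm.
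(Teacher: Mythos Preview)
Your argument for (i)$\Rightarrow$(ii) is correct and matches the paper's exactly: a basic un-neighbourhood inside $B_X$ is norm-bounded, hence cannot contain a nontrivial ideal, so \Cref{dichotomy} and \Cref{V-bdd-su} finish it.

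For (ii)$\Rightarrow$(i), however, you abandon the right argument too early. You write that from $\bignorm{\abs{x}\wedge e}<\varepsilon$ you ``want to conclude $\norm{x}$ is small --- but this is false in general.'' It is not false here: since $e$ is a strong unit, nothing nonzero is disjoint from $e$, and in fact the implication holds outright once you pass to the $C(K)$ model. This is precisely what the paper does: identify $X$ with $C(K)$ (so $e=\one$ and the norm is the sup-norm), and observe that $\bignorm{\abs{x}\wedge\one}_\infty<\varepsilon$ with $\varepsilon<1$ forces $\min\bigl(\abs{x(t)},1\bigr)<\varepsilon$ for every $t$, hence $\abs{x(t)}<\varepsilon$ for every $t$, i.e.\ $\norm{x}_\infty<\varepsilon$. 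That is the whole proof of this direction.

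Your detour through the dichotomy is both unnecessary and incomplete. The first attempted contradiction (``that ideal would be unbounded in $\norm{\cdot}_e$, but $V_{\varepsilon,e}\subseteq\{\norm{x}_e\le\text{something}\}$'') is circular: showing $V_{\varepsilon,e}$ is $\norm{\cdot}_e$-bounded \emph{is} the content you are trying to establish. The second attempt (``$\lambda y\wedge e\to 0$ in norm is violated'') does not parse as stated; to make it work you would need to argue that $\norm{(\lambda y)\wedge e}$ is eventually large, and the clean way to see that is again the sup-norm computation in $C(K)$ --- so you end up doing the paper's argument anyway, just wrapped in an extra layer. Drop the dichotomy for this direction and use the $C(K)$ sup-norm observation directly.
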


\begin{proof}
  Suppose that un-topology and norm topology agree. It follows that
  $V_{\varepsilon,u}$ is contained in $B_X$ for some $\varepsilon>0$
  and $u>0$. By \Cref{dichotomy}, we conclude that $V_{\varepsilon,u}$
  is contained in $[-u,u]$; hence $u$ is a strong unit by \Cref{V-bdd-su}.

  Suppose now that $X$ has a strong unit. Then $X$ is lattice and norm
  isomorphic to $C(K)$ for some compact Hausdorff space $K$.  Without
  loss of generality, $X=C(K)$. It follows from $x_\alpha\goesun 0$
  that $\abs{x_\alpha}\wedge\one\goesnorm 0$. Since the norm in $C(K)$
  is the $\sup$-norm, it is easy to see that $x_\alpha\goesnorm 0$.
\end{proof}

\section{Quasi-Interior points and metrizability}

Given a net $(x_\alpha)$ in a vector lattice with a weak unit $e$,
then $x_\alpha\goesuo x$ iff $\abs{x_\alpha-x}\wedge e\goeso 0$; see,
e.g., \cite[Corollary~3.5]{GTX} (this was proved in~\cite{Kaplan:97}
in the special case when the lattice is order complete). That is, it
suffices to test uo-convergence on a weak unit. Lemma~2.11
in~\cite{DOT} provides a similar statement for un-convergence and
quasi-interior points. We now prove that this property actually
characterizes quasi-interior points.

\begin{theorem}\label{qip}
  Let $e\in X_+$. The following are equivalent.
  \begin{enumerate}
  \item\label{qip-un} $e$ is a quasi-interior point;
  \item\label{qip-net} For every net $(x_\alpha)$ in $X_+$, if $x_\alpha\wedge
    e\goesnorm 0$ then $x_\alpha\goesun 0$;
  \item\label{qip-seq} For every sequence $(x_n)$ in $X_+$, if $x_n\wedge
    e\goesnorm 0$ then $x_n\goesun 0$.
  \end{enumerate}
\end{theorem}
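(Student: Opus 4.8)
The plan is to establish the cycle $\eqref{qip-un}\Rightarrow\eqref{qip-net}\Rightarrow\eqref{qip-seq}\Rightarrow\eqref{qip-un}$. The implication $\eqref{qip-net}\Rightarrow\eqref{qip-seq}$ is immediate, since every sequence is a net. For $\eqref{qip-un}\Rightarrow\eqref{qip-net}$ I would cite \cite[Lemma~2.11]{DOT}; alternatively it is a short direct argument. Given a net $(x_\alpha)$ in $X_+$ with $x_\alpha\wedge e\goesnorm 0$, fix $u\in X_+$ and $\varepsilon>0$, use that $e$ is a quasi-interior point to choose $n$ with $\norm{u-u\wedge ne}<\varepsilon$, and then observe
\begin{displaymath}
  x_\alpha\wedge u\le x_\alpha\wedge(u\wedge ne)+(u-u\wedge ne)\le n(x_\alpha\wedge e)+(u-u\wedge ne),
\end{displaymath}
so that $\norm{x_\alpha\wedge u}\le n\norm{x_\alpha\wedge e}+\varepsilon<2\varepsilon$ for $\alpha$ large; hence $x_\alpha\goesun 0$.

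The substance of the theorem is $\eqref{qip-seq}\Rightarrow\eqref{qip-un}$, which I would prove by contraposition. Suppose $e$ is not a quasi-interior point, so the closed ideal $\overline{I_e}$ is proper. Since $X_+$ spans $X$ and $\overline{I_e}$ is a closed subspace, $X_+$ cannot be contained in $\overline{I_e}$, so there is $x\in X_+$ with $\delta:=\operatorname{dist}(x,\overline{I_e})>0$. Put $x_n=(x-ne)^+=x-x\wedge ne$, a sequence in $X_+$. I claim that $x_n\wedge e\goesnorm 0$ while $(x_n)$ does not un-converge to $0$; this contradicts \eqref{qip-seq}. For the first claim the key point is the inequality $n\,(x_n\wedge e)\le x$ for every $n$: this is the scalar inequality $n\bigl((s-nt)^+\wedge t\bigr)\le s$, valid for all reals $s,t\ge 0$, and it transfers to $X$ because $x_n\wedge e$ and $x$ involve only $x$, $e$, scalars and finitely many lattice operations, all of which can be computed inside the principal ideal $I_{x+e}$, whose $C(K)$-representation (as recalled before \Cref{su}) makes the inequality pointwise obvious. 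Hence $\norm{x_n\wedge e}\le\tfrac1n\norm{x}\to 0$. For the second claim, $x-x_n=x\wedge ne\in I_e\subseteq\overline{I_e}$, so $\norm{x_n}\ge\operatorname{dist}(x,\overline{I_e})=\delta$; and since $0\le x_n\le x$ we have $x_n\wedge x=x_n$, whence $\norm{x_n\wedge x}=\norm{x_n}\ge\delta$ for all $n$, so that $x_n\not\goesun 0$ (take $u=x$ in the definition of un-convergence).

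The main obstacle is not the algebra but isolating a single vector $u$ that certifies the failure of un-convergence: knowing merely that $\norm{x_n}\not\to 0$ would not be enough. The sequence $x_n=(x-ne)^+$ is engineered precisely so that $x_n\le x$, which forces $x_n\wedge x=x_n$ and thereby converts the positive distance $\operatorname{dist}(x,\overline{I_e})$ into a uniform lower bound on $\norm{x_n\wedge x}$; the complementary fact, the inequality $n\,(x_n\wedge e)\le x$, is then routine functional calculus.
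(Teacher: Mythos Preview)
Your argument is correct, and in fact the crucial implication $\eqref{qip-seq}\Rightarrow\eqref{qip-un}$ is handled more cleanly than in the paper. Both proofs work inside a $C(K)$-representation of a principal ideal containing $x$ and $e$ and both hinge on producing a sequence that is order bounded by $x$ and satisfies an inequality of the form $n(\,\cdot\,\wedge e)\le x$. The paper argues directly: fixing $x\in X_+$, it builds auxiliary functions $z_n$ via Urysohn's lemma (squeezed between the level sets $F_{n+1}=\{x\ge(n+1)e\}$ and $O_n=\{x>ne\}$), proves $n(z_n\wedge e)\le x$ and $(x-(n+1)e)^+\le z_n$, and then uses \eqref{qip-seq} to force $z_n\goesnorm 0$, hence $(x-ne)^+\goesnorm 0$. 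You bypass Urysohn entirely by observing that the natural candidate $x_n=(x-ne)^+$ already satisfies the pointwise inequality $n\bigl((s-nt)^+\wedge t\bigr)\le s$; this single identity does the work of the paper's Claims~1--3 at once. Framing the implication contrapositively is also a nice touch: it converts the task into exhibiting one explicit sequence witnessing the failure of \eqref{qip-seq}, and the choice $u=x$ (possible because $x_n\le x$) gives a clean uniform lower bound $\norm{x_n\wedge x}=\norm{x_n}\ge\operatorname{dist}(x,\overline{I_e})$. The trade-off is that the paper's direct organization shows a bit more, namely that \eqref{qip-seq} forces $(x-ne)^+\goesnorm 0$ for \emph{every} $x\in X_+$; but for the equivalence this is not needed, and your route is shorter.
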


\begin{proof}
  The implication \eqref{qip-un}$\Rightarrow$\eqref{qip-net} was
  proved
  in~\cite[Lemma~2.11]{DOT}.
  \eqref{qip-net}$\Rightarrow$\eqref{qip-seq} is trivial. This leaves
  \eqref{qip-seq}$\Rightarrow$\eqref{qip-un}.

  Suppose \eqref{qip-seq}. Fix $x\in X_+$. We need to show that
  $x\wedge ne\goesnorm x$ or, equivalently $(x-ne)^+\goesnorm 0$ as a
  sequence of $n$. Put $u=x\vee e$. The ideal $I_u$ is lattice
  isomorphic (as a vector lattice) to $C(K)$ for some compact space
  $K$, with $u$ corresponding to $\one$. Since $x,e\in I_u$, we may
  consider $x$ and $e$ as elements of $C(K)$. Note that $x\vee e=\one$
  implies that $x$ and $e$ never vanish simultaneously. 

  For each $n\in\mathbb N$, we define
  \begin{displaymath}
    F_n=\bigl\{t\in K\mid x(t)\ge ne(t)\bigl\}
    \text{ and }
    O_n=\bigl\{t\in K\mid x(t)>ne(t)\bigl\}.
  \end{displaymath}
  Clearly, $O_n\subseteq F_n$, $O_n$ is open, and $F_n$ is closed.

  \emph{Claim 1}: $F_{n+1}\subseteq O_n$. Indeed, let $t\in
  F_{n+1}$. Then $x(t)\ge (n+1)e(t)$. If $e(t)>0$ then $x(t)>ne(t)$,
  so that $t\in O_n$. If $e(t)=0$ then $x(t)>0$, hence $t\in O_n$.

  By Urysohn's Lemma, we find $z_n\in C(K)$ such that $0\le z_n\le x$,
  $z_n$ agrees with $x$ on $F_{n+1}$ and vanishes outside of $O_n$. We can
  also view $ z_n$ as an element of $X$.

  \emph{Claim 2}: $n(z_n\wedge e)\le x$. Let $t\in K$. If $t\in O_n$
  then $n(z_n\wedge e)(t)\le ne(t)<x(t)$. If $t\notin O_n$ then
  $z_n(t)=0$, so that the inequality is satisfied trivially.

  \emph{Claim 3}: $\bigl(x-(n+1)e\bigr)^+\le z_n$. Again, let
  $t\in K$. If $t\in F_{n+1}$ then
  $\bigl(x-(n+1)e\bigr)^+\le x(t)=z_n(t)$. If $t\notin F_{n+1}$ then
  $x(t)<(n+1)e(t)$, so that $\bigl(x-(n+1)e\bigr)^+(t)=0$ and the
  inequality is satisfied trivially.

  Now, Claim 2 yields $0\le z_n\wedge e\le\frac{1}{n}x\goesnorm 0$,
  so that $z_n\wedge e\goesnorm 0$. By assumption, this yields
  $z_n\goesun 0$. Since $0\le z_n\le x$ for every $n$, the sequence
  $(z_n)$ is order bounded and, therefore, $z_n\goesnorm 0$. Now
  Claim~3 yields $\bigl(x-(n+1)e\bigr)^+\goesnorm 0$, which concludes
  the proof.
\end{proof}

\begin{theorem}\label{metriz}
  Un-topology is metrizable iff $X$ has a quasi-interior point. If $e$
  is a quasi-interior point then $d(x,y)=\bignorm{\abs{x-y}\wedge e}$
  is a metric for un-topology.
\end{theorem}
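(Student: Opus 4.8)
The plan is to handle the two directions separately; in both, the substantive content is supplied by \Cref{qip}, which allows one to test un-convergence against the single quasi-interior point $e$ (and conversely).

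\emph{``If'' direction.} Assume $e$ is a quasi-interior point and set $d(x,y)=\bignorm{\abs{x-y}\wedge e}$. First I would check that $d$ is a metric: it is visibly symmetric and translation invariant; definiteness holds because $d(x,y)=0$ forces $\abs{x-y}\wedge e=0$, and a vector disjoint from a weak unit is $0$ (a quasi-interior point is in particular a weak unit). The triangle inequality follows from the Riesz-space inequality $(a+b)\wedge c\le a\wedge c+b\wedge c$, valid for all $a,b,c\in X_+$ since it holds in $\mathbb R$, applied with $a=\abs{x-y}$, $b=\abs{y-z}$, $c=e$, followed by monotonicity and subadditivity of the norm. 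Since the open $d$-ball of radius $\varepsilon$ around $x$ is exactly $x+V_{\varepsilon,e}$, and both the $d$-topology and the un-topology are translation invariant, it suffices to compare their neighbourhood bases at $0$. Each $V_{\varepsilon,e}$ is a basic un-neighbourhood of $0$, so the $d$-topology is coarser than the un-topology. For the reverse, I would show that every basic un-neighbourhood $V_{\varepsilon,u}$ contains some $V_{\delta,e}$: otherwise, for each $n$ one could pick $x_n\in V_{1/n,e}\setminus V_{\varepsilon,u}$, and then $(\abs{x_n})$ is a sequence in $X_+$ with $\abs{x_n}\wedge e\goesnorm 0$, so part~\eqref{qip-seq} of \Cref{qip} yields $\abs{x_n}\goesun 0$, contradicting $\bignorm{\abs{x_n}\wedge u}\ge\varepsilon$. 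Hence the two topologies coincide and $d$ is a metric for un-topology.

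\emph{``Only if'' direction.} Suppose the un-topology is metrizable. Then $0$ has a countable neighbourhood base, and since the sets $V_{\varepsilon,u}$ (with $\varepsilon>0$ and $0\ne u\in X_+$) form a base at $0$, choosing one such set inside each member of the countable base produces a countable family $\{V_{\varepsilon_n,u_n}:n\in\mathbb N\}$ that is again a base at $0$; here each $u_n\ne 0$ (if $X=\{0\}$ the statement is trivial, so assume $X\ne\{0\}$). Using completeness of $X$, put $e=\sum_{n=1}^{\infty}2^{-n}\norm{u_n}^{-1}u_n$; the series converges absolutely and its sum satisfies $e\ge 2^{-n}\norm{u_n}^{-1}u_n\ge 0$ for every $n$. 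I claim $e$ is a quasi-interior point, which I would establish via the implication \eqref{qip-seq}$\Rightarrow$\eqref{qip-un} of \Cref{qip}. Let $(x_k)$ be a sequence in $X_+$ with $x_k\wedge e\goesnorm 0$. For each fixed $n$, the domination $u_n\le 2^n\norm{u_n}\,e$ gives $x_k\wedge u_n\goesnorm 0$ by a routine lattice estimate, so $x_k$ is eventually in $V_{\varepsilon_n,u_n}$. As these sets form a base at $0$, every $V_{\varepsilon,u}$ contains some $V_{\varepsilon_n,u_n}$, and therefore $x_k$ is eventually in every $V_{\varepsilon,u}$, i.e.\ $x_k\goesun 0$. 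By \Cref{qip}, $e$ is a quasi-interior point.

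\emph{Main obstacle.} There is no single hard step: \Cref{qip} is the engine, and everything else is soft. The metric axioms reduce to a standard lattice inequality; the ``only if'' direction is a first-countability argument together with the construction of $e$ as an absolutely convergent series, which is the only place where completeness of $X$ is used. The points that require a little care are that the extracted $u_n$ are nonzero and that $e$ dominates a scalar multiple of each $u_n$ (so that $x_k\wedge u_n\goesnorm 0$), and, in the other direction, that $e$ being merely a weak unit already forces $\abs{x-y}\wedge e=0\Rightarrow x=y$.
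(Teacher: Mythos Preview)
Your proof is correct and follows essentially the same route as the paper's: both directions are driven by \Cref{qip}, the metric is the same, and in the converse direction you build $e$ as an absolutely convergent series dominating a multiple of each $u_n$. The only cosmetic differences are that the paper verifies the ``if'' direction by directly equating net-convergence in $d$ with un-convergence (using the net version \eqref{qip-net} of \Cref{qip}) rather than your neighbourhood-base comparison via the sequential version \eqref{qip-seq}, and that the paper normalises with $M_n=2^n\norm{u_n}+1$ instead of $2^n\norm{u_n}$ (which also handles $u_n=0$ without a separate remark).
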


\begin{proof}
  Suppose that $e\in X_+$ is a quasi-interior point and put
  $d(x,y)=\bignorm{\abs{x-y}\wedge e}$ for $x,y\in X$. It can be
  easily verified that this defines a metric on $X$. Indeed,
  $d(x,x)=0$ and $d(x,y)=d(y,x)$ for every $x,y\in X$. If $d(x,y)=0$
  then $\abs{x-y}\wedge e=0$, hence $\abs{x-y}=0$ because $e$ is a
  weak unit, so that $x=y$. The triangle inequality follows from the
  fact that
  \begin{displaymath}
    \abs{x-z}\wedge e\le\abs{x-y}\wedge e+\abs{y-z}\wedge e.
  \end{displaymath}
  Note also that
  $x_\alpha\goesun x$ iff $d(x_\alpha,x)\to 0$ for every net
  $(x_\alpha)$ in $X$.

  Conversely, suppose that un-topology is metrizable; let $d$ be a
  metric for it. For each $n$, let $B_{\frac1n}$ be the ball of radius
  $\frac1n$ centred at zero for the metric, that is,
  \begin{displaymath}
    B_{\frac1n}=\bigl\{x\in X\mid d(x,0)\le \tfrac1n\bigr\}.
  \end{displaymath}
  Since $B_{\frac1n}$ is a neighborhood of zero for the un-topology,
  it contains $V_{\varepsilon_n,u_n}$ for some $\varepsilon_n>0$ and
  $u_n>0$. Let $M_n=2^n\norm{u_n}+1$; then the series
  $e=\sum_{n=1}^\infty\frac{u_n}{M_n}$ converges. Note that $M_n>1$
  and $u_n\le M_ne$ for every $n$. We claim that $e$ is a
  quasi-interior point.

  It suffices that \Cref{qip}\eqref{qip-net} is satisfied.
  Suppose that $x_\alpha\wedge e\goesnorm 0$ for some net $(x_\alpha)$
  in $X_+$. Fix $n$. It follows from
  \begin{displaymath}
    x_\alpha\wedge u_n\le (M_nx_\alpha)\wedge(M_ne)=M_n(x_\alpha\wedge
    e)\goesnorm 0
  \end{displaymath}
  that $x_\alpha\wedge u_n\goesnorm 0$. Then
  there exists $\alpha_0$ such that $\norm{x_\alpha\wedge
    u_n}<\varepsilon_n$ whenever $\alpha\ge\alpha_0$. Consequently,
  $x_\alpha$ is in $V_{\varepsilon_n,u_n}$ and, therefore, in
  $B_{\frac1n}$. It follows that $x_\alpha\to 0$ in the metric, hence
  $x_\alpha\goesun 0$. 
\end{proof}

Note that a linear Hausdorff topological space is metrizable iff it is
first countable, i.e., has a countable base of neighborhoods of zero,
see, e.g., \cite[pp.~49]{Kelley:63}. Therefore, Theorem~\ref{metriz}
implies, in particular, that un-topology is first countable iff
$X$ has a quasi-interior point. This should be compared with
Corollary~2.13 and Question~2.14 in~\cite{DOT} (we now know from
Example~\ref{emelyanov} that Question~2.14 has a negative answer).

\begin{proposition}\label{un-str-metr}
  Un-topology is stronger than or equal to a metric topology iff $X$
  has a weak unit.
\end{proposition}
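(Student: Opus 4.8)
The plan is to adapt the proof of \Cref{metriz}, weakening ``metrizable'' to ``stronger than or equal to a metric topology'' and, correspondingly, ``quasi-interior point'' to ``weak unit'' — so that the hierarchy strong unit $\Rightarrow$ quasi-interior point $\Rightarrow$ weak unit is matched by un-topology equals the norm topology $\Rightarrow$ un-topology is metrizable $\Rightarrow$ un-topology dominates a metric topology. I read the statement as: un-topology contains the topology induced by some metric $d$ on $X$.

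For ($\Leftarrow$), suppose $e\in X_+$ is a weak unit and put $d(x,y)=\bignorm{\abs{x-y}\wedge e}$, exactly as in \Cref{metriz}. Checking that $d$ is a metric is routine: symmetry is obvious, the triangle inequality follows from $\abs{x-z}\wedge e\le\abs{x-y}\wedge e+\abs{y-z}\wedge e$, and $d(x,y)=0$ forces $\abs{x-y}\wedge e=0$, hence $\abs{x-y}=0$ since $e$ is a weak unit. The open $d$-ball of radius $\varepsilon$ about $y$ is the translate $y+V_{\varepsilon,e}$, which is un-open; thus every basic $d$-open set is un-open, so the $d$-topology is contained in un-topology, i.e., un-topology is stronger than or equal to a metric topology.

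For ($\Rightarrow$), assume un-topology contains the topology of a metric $d$. For each $n$ the set $\bigl\{x\in X\mid d(x,0)<\tfrac1n\bigr\}$ is a zero neighborhood in the $d$-topology, hence in un-topology, so it contains some $V_{\varepsilon_n,u_n}$ with $\varepsilon_n>0$ and $u_n>0$. Following \Cref{metriz}, set $M_n=2^n\norm{u_n}+1$ and $e=\sum_{n=1}^\infty\frac{u_n}{M_n}$; then the series converges, $e\ge 0$, $M_n>1$, and $u_n\le M_ne$ for every $n$. I claim $e$ is a weak unit. Let $x\in X_+$ with $x\wedge e=0$. For each $n$, since $M_n\ge 1$ we have $\tfrac1{M_n}x\le x$, hence
\begin{displaymath}
  x\wedge u_n\le x\wedge(M_ne)=M_n\Bigl(\bigl(\tfrac1{M_n}x\bigr)\wedge e\Bigr)\le M_n(x\wedge e)=0,
\end{displaymath}
so $x\in V_{\varepsilon_n,u_n}$ and therefore $d(x,0)<\tfrac1n$. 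As this holds for all $n$, $d(x,0)=0$, whence $x=0$. Thus $e$ has trivial disjoint complement, i.e., $e$ is a weak unit.

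The argument presents no serious obstacle: it is the proof of \Cref{metriz} with the hard ``quasi-interior point'' step at the end replaced by the much easier observation that the vector $e$ we construct is a weak unit. The only points needing a moment's care are the elementary lattice inequality $x\wedge(M_ne)\le M_n(x\wedge e)$ for $M_n\ge 1$, and the insistence that $d$ be a genuine metric rather than merely a pseudometric — without the latter the statement is false, since every linear topology is stronger than the indiscrete topology, which is trivially pseudometrizable.
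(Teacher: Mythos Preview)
Your proof is correct and follows essentially the same route as the paper: both directions reuse the construction from \Cref{metriz}, replacing the quasi-interior-point verification by the easier check that the assembled vector $e$ has trivial disjoint complement. Your write-up is in fact a bit more explicit than the paper's in justifying $x\wedge u_n=0$ from $x\wedge e=0$, and your closing remark about the necessity of a genuine metric (not a pseudometric) is a worthwhile observation not made in the paper.
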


\begin{proof}
  Suppose that un-topology is stronger than or equal to a topology
  given by a metric. Construct $e$ as in the second part of the proof
  of \Cref{metriz}. We claim that $e$ is a weak unit. Suppose that
  $x\wedge e=0$. It follows that $x\wedge u_n=0$ for every $n$ and,
  therefore, $x\in V_{\varepsilon_n,u_n}$, hence $x\in
  B_{\frac1n}$. It follows that $x=0$.

  Conversely, let $e\in X_+$ be a weak unit. For $x,y\in X$, define
  $d(x,y)=\bignorm{\abs{x-y}\wedge e}$. As in the first part of the
  proof of \Cref{metriz}, this is a metric and $x_\alpha\goesun x$
  implies $d(x_\alpha,x)\to 0$.
\end{proof}

\subsection*{When is every un-null sequence norm bounded?}
If $X$ has a strong unit then, by \Cref{su}, un-topology agrees with
norm topology, hence every un-null sequence is norm null and, in
particular, norm bounded. This justifies the following question:
\emph{If every un-null sequence in $X$ is norm bounded (or even norm
  null), does this imply that $X$ has a strong unit?} The following
example shows that, in general, the answer in negative.

\begin{example}
  Let $X$ be as in \Cref{emelyanov}. Clearly, $X$ does not have a
  strong unit; it does not even have a weak unit. Yet, every un-null
  sequence in $X$ is norm null. Indeed, suppose that $x_n\goesun
  0$. Let $u$ be the characteristic function of
  $\bigcup_{n=1}^\infty\supp x_n$. By assumption, $\abs{x_n}\wedge
  u\goesnorm 0$. It follows that for every $\varepsilon\in(0,1)$ there
  exists $n_0$ such that for every $n\ge n_0$ we have
  $\bignorm{\abs{x_n}\wedge u}<\varepsilon$. It follows that
  $\norm{x_n}<\varepsilon$. 
\end{example}

However, we will see that the answer is affirmative under certain
additional assumptions.

Recall that every disjoint sequence is uo-null. Thus, if
$\dim X=\infty$, one can take any non-zero disjoint sequence, scale it
to make it norm unbounded, and thus produce a uo-null sequence which
is not norm bounded. However, this trick does not work for un-topology
because a disjoint sequence need not be un-null. Moreover, we have the
following.

\begin{proposition}\label{disj}
  The following are equivalent.
  \begin{enumerate}
  \item\label{disj-oc} $X$ is order continuous;
  \item\label{disj-seq} Every disjoint sequence in $X$ is un-null;
  \item\label{disj-net} Every disjoint net in $X$ is un-null. 
  \end{enumerate}
\end{proposition}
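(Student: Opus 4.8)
The plan is to prove the cycle $\eqref{disj-oc}\Rightarrow\eqref{disj-net}\Rightarrow\eqref{disj-seq}\Rightarrow\eqref{disj-oc}$. The implication $\eqref{disj-net}\Rightarrow\eqref{disj-seq}$ is trivial since a sequence is a net. For $\eqref{disj-oc}\Rightarrow\eqref{disj-net}$, suppose $X$ is order continuous and let $(x_\alpha)$ be a disjoint net. Recall from the preliminaries that in an order continuous normed lattice uo-convergence implies un-convergence, and that by \cite[Corollary~3.6]{GTX} every disjoint \emph{sequence} is uo-null. The subtlety is that we need the disjoint \emph{net} to be uo-null; but this is exactly \cite[Corollary~3.6]{GTX} as well (disjoint nets are uo-null in any vector lattice — each term is disjoint from the ``tail'' in an appropriate sense, or one invokes the standard fact that for a disjoint net $(x_\alpha)$ and any $u\in X_+$ one has $|x_\alpha|\wedge u\goeso 0$). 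So $x_\alpha\goesuo 0$, and order continuity gives $x_\alpha\goesun 0$.

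The substantive implication is $\eqref{disj-seq}\Rightarrow\eqref{disj-oc}$. I would argue by contraposition: assume $X$ is not order continuous and produce a disjoint sequence that is not un-null. If $X$ fails to be order continuous, then by the classical characterization (see e.g. \cite{Aliprantis:06}) $X$ contains a lattice copy of $\ell_\infty$, or — more usefully here — there is a positive disjoint sequence $(d_n)$ in $B_X$ and a positive functional (or simply an order-bounded obstruction) witnessing that some order-bounded decreasing net does not converge to zero in norm. Concretely, non-order-continuity gives a vector $u\in X_+$ and a disjoint sequence $(d_n)$ with $0\le d_n\le u$ and $\inf_n\norm{d_n}=\delta>0$: one extracts this from a decreasing net $u_\gamma\downarrow 0$ with $\norm{u_\gamma}\not\to 0$ by the standard disjointification argument inside the order interval $[0,u]$. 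Then $(d_n)$ is disjoint, but $\norm{|d_n|\wedge u}=\norm{d_n}\ge\delta$ for all $n$, so $(d_n)$ is not un-null, contradicting \eqref{disj-seq}.

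The main obstacle I anticipate is making the disjointification step in $\eqref{disj-seq}\Rightarrow\eqref{disj-oc}$ precise: extracting from ``$X$ not order continuous'' a \emph{disjoint} sequence that stays order-bounded and uniformly bounded away from zero in norm. The cleanest route is probably to invoke the known equivalence that $X$ is order continuous iff every order-bounded disjoint sequence is norm-null (this is a standard characterization of order continuity, e.g.\ in \cite{Aliprantis:06}); then non-order-continuity directly hands us an order-bounded disjoint sequence $(d_n)$, say $0\le d_n\le u$, with $\norm{d_n}\not\to 0$, and after passing to a subsequence $\norm{d_n}\ge\delta>0$. Since $|d_n|\wedge u=d_n$, this sequence is disjoint and not un-null, completing the contrapositive. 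With that characterization in hand the argument is short; the only care needed is to confirm the direction of the standard characterization being used and to cite it correctly.
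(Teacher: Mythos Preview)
Your argument is correct, and for \eqref{disj-seq}$\Rightarrow$\eqref{disj-oc} it is essentially the paper's proof in contrapositive form: the paper simply observes that \eqref{disj-seq} forces every \emph{order bounded} disjoint sequence to be norm null (since un and norm agree on order intervals) and then invokes \cite[Theorem~4.14]{Aliprantis:06}, which is exactly the characterization you propose to cite.

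The one genuine difference is in how you reach \eqref{disj-net}. You go \eqref{disj-oc}$\Rightarrow$\eqref{disj-net} directly, using that disjoint \emph{nets} are uo-null and then applying order continuity. The paper instead proves \eqref{disj-oc}$\Rightarrow$\eqref{disj-seq} (where \cite[Corollary~3.6]{GTX} applies verbatim, for sequences) and then \eqref{disj-seq}$\Rightarrow$\eqref{disj-net} by a short extraction argument: if a disjoint net $(x_\alpha)$ is not un-null, one finds $\varepsilon>0$, $u>0$, and an increasing sequence of indices $(\alpha_k)$ with $\bignorm{\abs{x_{\alpha_k}}\wedge u}>\varepsilon$, yielding a disjoint sequence that is not un-null. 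The paper's route has the small advantage of needing only the sequence case of ``disjoint $\Rightarrow$ uo-null,'' which is literally what \cite[Corollary~3.6]{GTX} states; your route needs the net version, which is true (via tail suprema in the Dedekind completion, exactly the ``each term is disjoint from the tail'' idea you gesture at) but is an extra step beyond the cited corollary. Either way the proof is short.
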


\begin{proof}
  \eqref{disj-oc}$\Rightarrow$\eqref{disj-seq} because every disjoint
  sequence is uo-null and, therefore, un-null. To show that
  \eqref{disj-seq}$\Rightarrow$\eqref{disj-oc}, note that every order
  bounded disjoint sequence is norm null and apply
  \cite[Theorem~4.14]{Aliprantis:06}.

  \eqref{disj-net}$\Rightarrow$\eqref{disj-seq} is trivial. To show
  that \eqref{disj-seq}$\Rightarrow$\eqref{disj-net}, suppose that
  there exists a disjoint net $(x_\alpha)$ which is not un-null. Then
  there exist $\varepsilon>0$ and $u\in X_+$ such that for every
  $\alpha$ there exists $\beta>\alpha$ with
  $\bignorm{\abs{x_\beta}\wedge u}>\varepsilon$. Inductively, we find
  an increasing sequence $(\alpha_k)$ of indices such that
  $\bignorm{\abs{x_{\alpha_k}}\wedge u}>\varepsilon$. Hence, the
  sequence $(x_{\alpha_k})$ is disjoint but not un-null.
\end{proof}

\begin{corollary}
  If $X$ is order continuous and every un-null sequence in $X$ is norm
  bounded then $\dim X<\infty$ (and, therefore, $X$ has a strong
  unit).
\end{corollary}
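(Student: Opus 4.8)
The plan is to argue by contradiction. Assume that $X$ is order continuous, that every un-null sequence in $X$ is norm bounded, and --- towards a contradiction --- that $\dim X=\infty$. I will manufacture a un-null sequence that fails to be norm bounded.

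The device is \Cref{disj}: since $X$ is order continuous, every disjoint sequence in $X$ is un-null. So it suffices to produce an infinite sequence $(d_n)_{n\ge 1}$ of pairwise disjoint nonzero vectors in $X$ and then rescale it. The existence of such a sequence in any infinite-dimensional vector lattice is a classical fact, which I would either quote or justify as follows: if an Archimedean vector lattice admitted no infinite family of pairwise disjoint nonzero positive vectors, then a maximal such family $\{u_1,\dots,u_m\}$ would consist of atoms (if $\dim I_{u_i}\ge 2$ for some $i$, one could split $u_i$ into two nonzero disjoint pieces below $u_i$ and enlarge the family, contradicting maximality); each $\mathbb{R}u_i=I_{u_i}$ is then a band, so their direct sum $I_{u_1+\dots+u_m}$ is a finite-dimensional ideal, hence an order-closed ideal, hence a band; by maximality $u_1+\dots+u_m$ is a weak unit, so this band is all of $X$, forcing $\dim X=m<\infty$ --- a contradiction.

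So, with $\dim X=\infty$, fix pairwise disjoint nonzero vectors $(d_n)_{n\ge 1}$ and put $e_n=\frac{n}{\norm{d_n}}d_n$. The sequence $(e_n)$ is again pairwise disjoint (positive scalings preserve disjointness), hence $e_n\goesun 0$ by \Cref{disj}; but $\norm{e_n}=n\to\infty$, so $(e_n)$ is not norm bounded, contradicting the hypothesis. Therefore $\dim X<\infty$. The parenthetical assertion is then immediate: a finite-dimensional Archimedean Banach lattice is lattice and norm isomorphic to $\mathbb{R}^m$ with the coordinatewise order, in which $(1,\dots,1)$ is a strong unit.

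The only substantive ingredient is the classical structure fact that an infinite-dimensional vector lattice contains an infinite disjoint system of nonzero vectors; granting it, the argument is merely a rescaling together with an appeal to \Cref{disj}, so I do not anticipate any serious obstacle.
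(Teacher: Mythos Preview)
Your argument is correct and follows exactly the same route as the paper's proof: assume $\dim X=\infty$, take a nonzero disjoint sequence, rescale it to be norm unbounded, and invoke \Cref{disj} to conclude it is un-null, a contradiction. The only difference is that the paper simply asserts the existence of a nonzero disjoint sequence in an infinite-dimensional Banach lattice, whereas you supply a justification; your extra detail is fine (and could be streamlined by noting that each atom generates a projection band, so the sum of the $I_{u_i}$ is itself a projection band whose disjoint complement is zero by maximality).
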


\begin{proof}
  Suppose $\dim X=\infty$. Then there exists a non-zero disjoint
  sequence in $X$. Scaling it if necessary, we may assume that
  it is not norm bounded. Yet it is un-null. A contradiction.
\end{proof}

Note that Example~2.7 in~\cite{DOT} is an example of a disjoint but
non un-null sequence in an infinite-dimensional Banach lattice which
is not order continuous and lacks a strong unit.

\begin{proposition}
  If $X$ has a quasi-interior point and every un-null sequence is norm
  bounded then $X$ has a strong unit.
\end{proposition}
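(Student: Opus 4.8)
The plan is to argue by contradiction. Assume $X$ has a quasi-interior point $e$ but no strong unit; I will produce an un-null sequence that is not norm bounded. The mechanism is the dichotomy underlying \Cref{dichotomy}: since $e$ is not a strong unit, \Cref{V-bdd-su} forbids any $V_{\varepsilon,e}$ from being contained in $[-e,e]$, so each $V_{1/n,e}$ must contain a vector lying outside $[-e,e]$, and around such a vector $V_{1/n,e}$ in fact swallows a whole ray on which $\bignorm{\,\cdot\,\wedge e}$ stays small.

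Concretely, I would first fix, for each $n$, a vector $x_n\in V_{1/n,e}$ with $x_n\notin[-e,e]$; this is possible, for otherwise $V_{1/n,e}\subseteq[-e,e]$ and $e$ would be a strong unit by \Cref{V-bdd-su}. Since $V_{1/n,e}$ is solid I may replace $x_n$ by $\abs{x_n}$ and so assume $x_n\ge 0$, $\bignorm{x_n\wedge e}<\tfrac1n$, and $x_n\not\le e$. Put $y_n=(x_n-e)^+$; then $y_n\ne 0$, and the elementary inequality $(\lambda y_n)\wedge e\le x_n\wedge e$ for every $\lambda\ge 0$ — the ``easy exercise'' from the proof of \Cref{dichotomy}, with $u=e$ — gives $\bignorm{(\lambda y_n)\wedge e}<\tfrac1n$ for all $\lambda\ge 0$. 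Setting $f_n=\tfrac{n}{\norm{y_n}}\,y_n$ we obtain $\norm{f_n}=n$ and $\bignorm{f_n\wedge e}<\tfrac1n$. Since $(f_n)$ is a sequence in $X_+$ with $f_n\wedge e\goesnorm 0$ and $e$ is a quasi-interior point, \Cref{qip} yields $f_n\goesun 0$; thus $(f_n)$ is un-null but $\norm{f_n}\to\infty$, contradicting the hypothesis. Hence $e$ is a strong unit.

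The one point that needs care — and the reason the proof cannot simply ``rescale an un-null sequence'' — is that for a fixed $x>0$ the quantity $\bignorm{(\lambda x)\wedge e}$ is nondecreasing in $\lambda$ and satisfies $\bignorm{(\lambda x)\wedge e}\ge\bignorm{x\wedge e}>0$ for $\lambda\ge 1$ (the positivity because $e$ is a weak unit), so scaling $x$ up by $n$ does not keep $\bignorm{(nx)\wedge e}$ small. Replacing $x_n$ by $(x_n-e)^+$ is exactly what removes this obstruction: after subtracting $e$, the truncation $(\lambda(x_n-e)^+)\wedge e$ is dominated by $x_n\wedge e$ uniformly in $\lambda$, which is what permits rescaling to $\norm{f_n}=n$ without enlarging $\bignorm{f_n\wedge e}$.
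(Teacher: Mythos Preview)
Your proof is correct and shares the same skeleton as the paper's: argue by contradiction, use the dichotomy of \Cref{dichotomy}/\Cref{V-bdd-su} to find rays inside neighborhoods $V_{\varepsilon,u}$, and then pick points of norm $n$ along those rays to produce an unbounded un-null sequence. The difference is in how un-nullness is verified. The paper invokes \Cref{metriz} to obtain a metric for the un-topology, finds for each metric ball $B_{1/n}$ a basic neighborhood $V_{\varepsilon_n,u_n}\subseteq B_{1/n}$, and applies the dichotomy to each of these (with varying $u_n$); the sequence is then un-null simply because $x_n\in B_{1/n}$. You instead work with the single test vector $e$ throughout and invoke \Cref{qip} to conclude that $\bignorm{f_n\wedge e}\to 0$ already forces $f_n\goesun 0$. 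Your route is slightly more direct---it bypasses metrizability and needs the dichotomy only for $u=e$---while the paper's route makes the topological content (a shrinking neighborhood base) more visible. Both ultimately rest on the same lemma and the quasi-interior-point hypothesis is used in an equivalent way, since \Cref{metriz} itself is proved via \Cref{qip}.
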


\begin{proof}
  By \Cref{metriz}, the un-topology on $X$ is metrizable. Fix such a
  metric. As before, for each $n$, let $B_{\frac1n}$ be the ball of
  radius $\frac1n$ centred at zero for the metric. For each $n$,
  $B_{\frac1n}$ contains $V_{\varepsilon_n,u_n}$ for some
  $\varepsilon_n>0$ and $u_n>0$. If
  $V_{\varepsilon_n,u_n}\subseteq[-u_n,u_n]$ for some $n$ then $u_n$
  is a strong unit by \Cref{V-bdd-su}. Otherwise, by \Cref{dichotomy},
  each $V_{\varepsilon_n,u_n}$ contains a non-trivial ideal. Pick any
  $x_n$ in this ideal with $\norm{x_n}=n$. Then the sequence $(x_n)$
  is norm unbounded; yet $x_n\in B_{\frac1n}$ for every $n$, so that
  $x_n\goesun 0$; a contradiction.
\end{proof}

\section{Un-convergence in a sublattice}

Recall that if $(y_\alpha)$ is a net in a regular sublattice $Y$ of a
vector lattice $X$ then $y_\alpha\goesuo 0$ in $Y$ iff
$y_\alpha\goesuo 0$ in $X$. The situation is very different for
un-convergence. Let $Y$ be a sublattice of a normed lattice
$X$ and $(y_\alpha)$ a net in $Y$. If $y_\alpha\goesun 0$ in $X$
then, clearly, $y_\alpha\goesun 0$ in $Y$. However, the following
examples show that the converse fails even for closed ideals or bands.

\begin{example}\label{ex:c0}
  The sequence of the standard unit vectors $(e_n)$ is un-null in
  $c_0$ but not in $\ell_\infty$, even though $c_0$ is a closed ideal in
  $\ell_\infty$. 
\end{example}

\begin{example}
  Let $X=C[-1,1]$ and $Y$ be the set of all $f\in X$ which vanish on
  $[-1,0]$. It is easy to see that $Y$ is a band (though it is not a
  projection band). Let $(f_n)$ be a sequence in $Y_+$ such
  that $\norm{f_n}=1$ and $\supp
  f_n\subseteq[\frac{1}{n+1},\frac1n]$.
  Since $X$ has a strong unit, the un-topology on $X$ agrees with the
  norm topology, hence $(f_n)$ is not un-null in $X$. However,
  it is easy to see that $(f_n)$ is un-null in $Y$.
\end{example}

Nevertheless, there are some good news. Recall that a sublattice $Y$
of a vector lattice $X$ is \term{majorizing} if for every $x\in X_+$
there exists $y\in Y_+$ with $x\le y$.

\begin{theorem}\label{sublat}
  Let $Y$ be a sublattice of a normed lattice $X$ and $(y_\alpha)$ a
  net in $Y$ such that $y_\alpha\goesun 0$ in $Y$. Each of the
  following conditions implies that $y_\alpha\goesun 0$ in $X$.
  \begin{enumerate}
  \item\label{sublat-maj} $Y$ is majorizing in $X$;
  \item\label{sublat-dense} $Y$ is norm dense in $X$;
  \item\label{sublat-projb} $Y$ is a projection band in $X$.
  \end{enumerate}
\end{theorem}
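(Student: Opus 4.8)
The plan is to verify in each case the defining condition for un-convergence, namely that $\bignorm{\abs{y_\alpha}\wedge u}\to 0$ for every $u\in X_+$. By hypothesis this already holds whenever $u\in Y_+$, so in each case it suffices to ``push'' a general $u\in X_+$ down to $Y_+$. For \eqref{sublat-maj} this is immediate: choose $v\in Y_+$ with $u\le v$ (possible since $Y$ is majorizing); then $\abs{y_\alpha}\wedge u\le\abs{y_\alpha}\wedge v$, so $\bignorm{\abs{y_\alpha}\wedge u}\le\bignorm{\abs{y_\alpha}\wedge v}\to 0$.

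For \eqref{sublat-dense}, fix $u\in X_+$ and $\varepsilon>0$. By density choose $w\in Y$ with $\norm{u-w}<\varepsilon$ and put $v=w^+$; since $Y$ is a sublattice, $v\in Y_+$, and since $u=u^+$ while $x\mapsto x^+$ is non-expansive in norm, $\norm{u-v}\le\norm{u-w}<\varepsilon$. The standard Birkhoff estimate $\bigabs{\abs{y_\alpha}\wedge u-\abs{y_\alpha}\wedge v}\le\abs{u-v}$ then gives $\bignorm{\abs{y_\alpha}\wedge u}\le\bignorm{\abs{y_\alpha}\wedge v}+\varepsilon$. As $v\in Y_+$, the first term tends to $0$, so $\limsup_\alpha\bignorm{\abs{y_\alpha}\wedge u}\le\varepsilon$; since $\varepsilon$ was arbitrary, $\abs{y_\alpha}\wedge u\goesnorm 0$.

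For \eqref{sublat-projb}, let $P$ be the band projection of $X$ onto $Y$, so that $X=Y\oplus Y^d$. Given $u\in X_+$, decompose $u=u_1+u_2$ with $u_1=Pu\in Y_+$ and $u_2=(I-P)u\in(Y^d)_+$. Since $Y$ is a band we have $\abs{y_\alpha}\in Y_+$, hence $\abs{y_\alpha}\wedge u_2=0$. I will use the elementary identity that if $a,b,c\in X_+$ with $a\wedge c=0$ then $a\wedge(b+c)=a\wedge b$: writing $d=a\wedge(b+c)$, from $0\le d\le a$ we get $d\wedge c=0$, hence $d=(d-c)^+$; and $d\le b+c$ gives $d-c\le b$, so $d\le b$; thus $d\le a\wedge b$, the reverse inequality being obvious. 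Applying this with $a=\abs{y_\alpha}$, $b=u_1$, $c=u_2$ yields $\abs{y_\alpha}\wedge u=\abs{y_\alpha}\wedge u_1$, and $\bignorm{\abs{y_\alpha}\wedge u_1}\to 0$ because $u_1\in Y_+$.

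None of the three cases is deep. The step I expect to need the most care is the disjointness identity invoked in case \eqref{sublat-projb}, but that is just the two-line computation above built on $a=a\wedge c+(a-c)^+$. A minor subtlety in case \eqref{sublat-dense} is that one must approximate the \emph{positive} vector $u$ by a \emph{positive} element of $Y$, which is why we pass to $w^+$; everything else is routine monotonicity together with Birkhoff's non-expansiveness inequalities for the lattice operations.
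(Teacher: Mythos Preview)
Your proof is correct and follows essentially the same approach as the paper's: majorize in case~\eqref{sublat-maj}, approximate $u$ by a positive element of $Y$ in case~\eqref{sublat-dense}, and use the band decomposition $u=Pu+(I-P)u$ together with disjointness in case~\eqref{sublat-projb}. The only differences are cosmetic: the paper passes once to $y_\alpha\ge 0$, uses the inequality $y_\alpha\wedge u\le y_\alpha\wedge v+\abs{u-v}$ in place of your Birkhoff estimate in~\eqref{sublat-dense}, and simply asserts the identity $y_\alpha\wedge u=y_\alpha\wedge v$ in~\eqref{sublat-projb} that you take the trouble to verify.
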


\begin{proof}
  Without loss of generality, $y_\alpha\ge 0$ for every $\alpha$.
  \eqref{sublat-maj} is straightforward. To
  prove~\eqref{sublat-dense}, take $u\in X_+$ and fix
  $\varepsilon>0$. Find $v\in Y_+$ with $\norm{u-v}<\varepsilon$. By
  assumption, $y_\alpha\wedge v\goesnorm 0$. We can find $\alpha_0$
  such that $\norm{y_\alpha\wedge v}<\varepsilon$
  whenever $\alpha\ge\alpha_0$. It follows from $u\le v+\abs{u-v}$
  that
  \begin{math}
    y_\alpha\wedge u\le y_\alpha\wedge v+\abs{u-v},
  \end{math}
  so that
  \begin{displaymath}
    \norm{y_\alpha\wedge u}\le\norm{y_\alpha\wedge v}+\norm{u-v}
    <2\varepsilon.
  \end{displaymath}
  It follows that $y_\alpha\wedge u\goesnorm 0$. Hence
  $y_\alpha\goesun 0$ in $X$.

  To prove~\eqref{sublat-projb}, let $u\in X_+$. Then $u=v+w$ for some
  positive $v\in Y$ and $w\in Y^d$. It follows from $y_\alpha\perp w$ that
  \begin{math}
    y_\alpha\wedge u=y_\alpha\wedge v\goesnorm 0.
  \end{math}
\end{proof}

Recall that every (Archimedean) vector lattice $X$ is majorizing in
its \term{order (or Dedekind) completion} $X^\delta$; see , e.g., \cite[p.~101]{Aliprantis:06}.

\begin{corollary}
  If $X$ is a normed lattice and $x_\alpha\goesun x$ in $X$ then
  $x_\alpha\goesun x$ in the order completion $X^\delta$ of $X$.
\end{corollary}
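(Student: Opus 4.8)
The plan is to read this off directly from \Cref{sublat}\eqref{sublat-maj}, with the only preliminary work being to pin down the normed-lattice structure on $X^\delta$. Here $X^\delta$ is understood as a normed lattice via the norm
$\norm{\xi}_\delta=\inf\bigl\{\norm{x}\mid x\in X,\ \abs{\xi}\le x\bigr\}$,
which is well defined (the infimum is over a non-empty set) precisely because $X$ is majorizing in $X^\delta$. One checks in a routine way that $\norm{\cdot}_\delta$ is a lattice norm on $X^\delta$, and that it restricts to the original norm on $X$: for $\xi\in X_+$ the inequality $\norm{\xi}_\delta\le\norm{\xi}$ is immediate from taking $x=\xi$, while the reverse inequality follows from monotonicity of $\norm{\cdot}$ on $X$. (Alternatively, one may simply invoke the standard fact that the order completion of a normed, resp.\ Banach, lattice is again a normed, resp.\ Banach, lattice with norm extending the original one.)

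With this in place, the proof is short. Since un-convergence respects the lattice operations and $y_\alpha\goesun 0$ iff $\abs{y_\alpha}\goesun 0$, we may replace $(x_\alpha)$ by $\bigl(\abs{x_\alpha-x}\bigr)$ and assume $x=0$ and $x_\alpha\ge 0$ for all $\alpha$. Thus $(x_\alpha)$ is a net in the sublattice $X$ of the normed lattice $X^\delta$, with $x_\alpha\goesun 0$ in $X$. By the fact recalled just above the statement, $X$ is majorizing in $X^\delta$, so \Cref{sublat}\eqref{sublat-maj} applies verbatim and yields $x_\alpha\goesun 0$ in $X^\delta$.

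I do not expect any genuine obstacle here; the substance is entirely contained in \Cref{sublat}\eqref{sublat-maj}. The one point deserving a line of care is bookkeeping about the ambient norm: when testing un-convergence in $X^\delta$ one works with $\norm{\cdot}_\delta$, so one must note that (i) $\norm{\cdot}_\delta$ agrees with $\norm{\cdot}$ on $X$, and (ii) finite infima such as $x_\alpha\wedge v$ with $v\in X_+$, which appear in the proof of \Cref{sublat}\eqref{sublat-maj} after majorizing $u\in (X^\delta)_+$ by some $v\in X_+$, are computed the same way in $X$ and in $X^\delta$ — both hold because $X$ sits in $X^\delta$ as a sublattice carrying a norm extending its own.
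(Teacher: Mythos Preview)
Your proof is correct and follows the same approach as the paper: both invoke \Cref{sublat}\eqref{sublat-maj} together with the fact that $X$ is majorizing in $X^\delta$. You are simply more explicit about the normed-lattice structure on $X^\delta$, which the paper leaves implicit.
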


\begin{corollary}
  If $X$ is a KB-space and $x_\alpha\goesun 0$ in $X$ then
  $x_\alpha\goesun 0$ in $X^{**}$.
\end{corollary}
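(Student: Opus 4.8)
The plan is to deduce this from \Cref{sublat}\eqref{sublat-projb}. The one structural input I would invoke is the classical characterization of KB-spaces inside the bidual: a Banach lattice $X$ is a KB-space if and only if the canonical image of $X$ is a band in $X^{**}$ (see, e.g., \cite[Theorem~2.4.12]{Aliprantis:06}-style references, or Meyer--Nieberg, \emph{Banach Lattices}). I would combine this with two routine facts: the bidual $X^{**}$, being the norm dual of the Banach lattice $X^*$, is a Dedekind complete Banach lattice; and in a Dedekind complete vector lattice every band is a projection band. Putting these together, $X$ is a projection band in $X^{**}$, and the canonical embedding $X\hookrightarrow X^{**}$ is an isometric lattice embedding, so un-convergence ``in $X^{**}$'' of a net originally living in $X$ is meaningful.

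Once $X$ is recognized as a projection band in $X^{**}$, the corollary is immediate by \Cref{sublat}\eqref{sublat-projb}, applied with $Y=X$ and ambient normed lattice $X^{**}$. If one prefers a self-contained presentation, one can simply unfold that argument: given $u\in X^{**}_+$, write $u=v+w$ with $v\in X_+$ and $w$ in the disjoint complement $X^d$ inside $X^{**}$; since $y_\alpha\perp w$ we get $\abs{y_\alpha}\wedge u=\abs{y_\alpha}\wedge v\goesnorm 0$, the last convergence holding because $v\in X_+$ and $y_\alpha\goesun 0$ in $X$. As $u\in X^{**}_+$ was arbitrary, $x_\alpha\goesun 0$ in $X^{**}$.

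The only genuine content here is citing the band characterization of KB-spaces; this is the step I would flag, not as a difficulty but as the place where all the work is hidden. Everything else is bookkeeping: checking Dedekind completeness of $X^{**}$, the band-to-projection-band upgrade, and that \Cref{sublat} applies verbatim. One minor point worth a sentence in the write-up is that order continuity alone (which gives only that $X$ is an \emph{ideal} of $X^{**}$) would not suffice for \Cref{sublat}\eqref{sublat-projb}; it is precisely the KB-property that promotes this ideal to a projection band, which is why the hypothesis cannot be weakened in the obvious way.
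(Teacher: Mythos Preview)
Your proposal is correct and follows essentially the same route as the paper: the paper simply cites \cite[Theorem~4.60]{Aliprantis:06} to conclude that $X$ is a projection band in $X^{**}$ and then invokes \Cref{sublat}\eqref{sublat-projb}. Your version unpacks the first step into ``$X$ is a band in $X^{**}$'' plus ``$X^{**}$ is Dedekind complete, so bands are projection bands,'' which is exactly the content of that theorem.
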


\begin{proof}
  By \cite[Theorem~4.60]{Aliprantis:06}, $X$ is a projection band in
  $X^{**}$. The conclusion now follows from
  \Cref{sublat}\eqref{sublat-projb}.
\end{proof}

\Cref{ex:c0} shows that the assumption that $X$ is a KB-space cannot
be removed.

\begin{corollary}
  Let $Y$ be a sublattice of an order continuous Banach lattice
  $X$. If $y_\alpha\goesun 0$ in $Y$ then  $y_\alpha\goesun 0$ in $X$.
\end{corollary}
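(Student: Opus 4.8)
The plan is to factor the inclusion $Y\subseteq X$ through two intermediate sublattices and apply the three parts of \Cref{sublat} in turn. Let $I$ be the ideal generated by $Y$ in $X$. Since $Y$ is a sublattice it is closed under finite lattice operations, and hence under $y\mapsto\abs y$; tracing this through the usual description of a generated ideal one obtains
\begin{displaymath}
  I=\bigl\{x\in X\mid \abs x\le y\text{ for some }y\in Y_+\bigr\},
\end{displaymath}
the point being that a finite sum $\lambda_1\abs{y_1}+\dots+\lambda_n\abs{y_n}$ is dominated by $\bigl(\sum_i\lambda_i\bigr)\cdot\bigl(\abs{y_1}\vee\dots\vee\abs{y_n}\bigr)\in Y_+$. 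In particular $Y$ is majorizing in $I$, so \Cref{sublat}\eqref{sublat-maj} already gives that $y_\alpha\goesun 0$ in $I$.

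Next I would pass to the norm closure $\overline I$, which is again an ideal. The key observation is that $\overline I$ is in fact a band in $X$: if $0\le x_\gamma\uparrow x$ in $X$ with $x_\gamma\in\overline I$, then $x-x_\gamma\downarrow 0$, so order continuity of $X$ forces $x_\gamma\goesnorm x$ and hence $x\in\overline I$. Since an order continuous Banach lattice is Dedekind complete (see, e.g., \cite{Aliprantis:06}), every band in $X$ is a projection band; in particular $\overline I$ is a projection band in $X$. Now $I$ is trivially norm dense in the Banach lattice $\overline I$, so \Cref{sublat}\eqref{sublat-dense}, applied with ambient space $\overline I$, yields $y_\alpha\goesun 0$ in $\overline I$; and finally \Cref{sublat}\eqref{sublat-projb}, applied with $\overline I$ as a sublattice of $X$, yields $y_\alpha\goesun 0$ in $X$.

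I do not expect a genuine obstacle here; the verifications (the description of $I$, and that $\overline I$ is a band) are routine. The only thing to be careful about is the order of the reductions: one must go $Y\rightsquigarrow I\rightsquigarrow\overline I\rightsquigarrow X$, invoking ``majorizing'', then ``dense'', then ``projection band'' in exactly that order, since $Y$ need be neither dense nor a projection band in $X$, and $I$ itself need not be a projection band in $X$ either. Order continuity of $X$ is used exactly once, namely to upgrade the closed ideal $\overline I$ to a projection band so that part~\eqref{sublat-projb} applies.
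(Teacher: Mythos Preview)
Your proof is correct and follows exactly the same route as the paper: factor $Y\hookrightarrow I(Y)\hookrightarrow\overline{I(Y)}\hookrightarrow X$ and apply parts \eqref{sublat-maj}, \eqref{sublat-dense}, \eqref{sublat-projb} of \Cref{sublat} in that order, using order continuity of $X$ to ensure the closed ideal $\overline{I(Y)}$ is a projection band. The paper states this more tersely, but your additional justifications (the description of $I$ and the argument that $\overline{I}$ is a band) are accurate and the overall strategy is identical.
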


\begin{proof}
  Suppose that $y_\alpha\goesun 0$ in $Y$. By
  \Cref{sublat}\eqref{sublat-maj}, $y_\alpha\goesun 0$ in the ideal
  $I(Y)$ generated by $Y$ in $X$. By
  \Cref{sublat}\eqref{sublat-dense}, $y_\alpha\goesun 0$ in the
  closure $\overline{I(Y)}$ of the ideal. Since $X$ is order
  continuous, $\overline{I(Y)}$ is a projection band in $X$.
  It now follows from \Cref{sublat}\eqref{sublat-projb} that
  $y_\alpha\goesun 0$ in $X$.
\end{proof}

\begin{question}
  Let $B$ be a band in $X$. Suppose that every net in $B$ which is
  un-null in $B$ is also un-null in $X$. Does this imply that $B$ is a
  projection band?
\end{question}

\bigskip

\begin{proposition}\label{band-closed}
  Every band in a normed lattice is un-closed.
\end{proposition}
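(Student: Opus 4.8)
The plan is to use the identity $B=B^{dd}$, which holds for every band $B$: rather than decomposing $X$ as $B\oplus B^d$ (possible only when $B$ is a projection band), I would argue that every un-limit of a net in $B$ again lies in $B^{dd}$. So suppose $(x_\alpha)$ is a net in $B$ with $x_\alpha\goesun x$ for some $x\in X$; it suffices to show that $x\perp y$ for every $y\in B^d$, since then $x\in B^{dd}=B$. Fix such a $y$. Because $x_\alpha\in B$ and $y\in B^d$, we have $x_\alpha\perp y$, that is, $\abs{x_\alpha}\wedge\abs{y}=0$, for every $\alpha$.

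Next I would test un-convergence against the single positive vector $u=\abs{y}$: from $x_\alpha\goesun x$ we get $\abs{x_\alpha-x}\wedge\abs{y}\goesnorm 0$. I would then combine this with the elementary lattice inequality
\[
  \abs{x}\wedge\abs{y}\le\abs{x-x_\alpha}\wedge\abs{y}+\abs{x_\alpha}\wedge\abs{y}=\abs{x-x_\alpha}\wedge\abs{y},
\]
which uses $\abs{x}\le\abs{x-x_\alpha}+\abs{x_\alpha}$ and $(a+b)\wedge c\le a\wedge c+b\wedge c$ for positive $a,b,c$, together with $\abs{x_\alpha}\wedge\abs{y}=0$. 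Taking norms gives $\bignorm{\abs{x}\wedge\abs{y}}\le\bignorm{\abs{x-x_\alpha}\wedge\abs{y}}\to 0$, hence $\abs{x}\wedge\abs{y}=0$, i.e.\ $x\perp y$. As $y\in B^d$ was arbitrary, this proves $x\in B$ and therefore that $B$ is un-closed. (Alternatively, one can invoke that un-convergence respects lattice operations and has unique limits: $\abs{x_\alpha}\wedge\abs{y}\goesun\abs{x}\wedge\abs{y}$, while the left-hand net is identically $0$.)

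I do not anticipate a genuine obstacle here. The one point to watch is that un-topology need not be metrizable (\Cref{metriz}), so the argument must be carried out with nets rather than sequences — but every step above is valid for nets. The only real pitfall is the temptation to write $X=B\oplus B^d$, which fails for a general band that is not a projection band; passing through $B^{dd}$ sidesteps this cleanly.
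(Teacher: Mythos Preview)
Your proof is correct and follows essentially the same route as the paper: both arguments show that a un-limit $x$ of a net in $B$ satisfies $x\perp y$ for every $y\in B^d$ and then conclude via $B=B^{dd}$. The paper's proof is the one-line version you mention as an alternative (invoking un-continuity of lattice operations to pass from $\abs{x_\alpha}\wedge z=0$ to $\abs{x}\wedge z=0$), whereas your primary argument unpacks this with the explicit inequality $\abs{x}\wedge\abs{y}\le\abs{x-x_\alpha}\wedge\abs{y}+\abs{x_\alpha}\wedge\abs{y}$; the two are equivalent in content.
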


\begin{proof}
  Let $B$ be a band and $(x_\alpha)$ a net in $B$ such that
  $x_\alpha\goesun x$. Fix $z\in B^d$. Then $\abs{x_\alpha}\wedge z=0$
  for every $\alpha$. Since lattice operations are un-continuous, we
  have $\abs{x}\wedge z=0$. It follows that $x\in B^{dd}=B$.
\end{proof}

\begin{remark}\label{pb}
  Let $B$ be a projection band a normed lattice $X$. We write
  $P_B$ for the corresponding band projection. It follows easily from
  $0\le P_B\le I$ that if $x_\alpha\goesun x$ in $X$ then
  $P_Bx_\alpha\goesun P_Bx$ both in $X$ and in $B$.
\end{remark}

\subsection*{Dense band decompositions.}
Let $X$ be a Banach lattice. By a \term{dense band
  decomposition} of $X$ we mean a family $\mathcal B$ of pairwise
disjoint projection bands in $X$ such that the linear span of all of
the bands in $\mathcal B$ is norm dense in $X$. 

\begin{lemma}
  Let $\mathcal B$ be a family of pairwise disjoint projection bands
  in a Banach lattice $X$. $\mathcal B$ is a dense band decomposition
  of $X$ iff for every $x\in X$ and every $\varepsilon>0$ there exist
  $B_1,\dots,B_n$ in $\mathcal B$ such that
  $\bignorm{x-\sum_{i=1}^nP_{B_i}x}<\varepsilon$.
\end{lemma}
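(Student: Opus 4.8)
The plan is to prove the two implications separately; the converse direction (the displayed approximation property $\Rightarrow$ ``dense band decomposition'') is immediate, so the real content is the forward implication.

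For the easy direction I would argue as follows. Assume the stated approximation property, fix $x\in X$ and $\varepsilon>0$, and choose $B_1,\dots,B_n\in\mathcal B$ with $\bignorm{x-\sum_{i=1}^nP_{B_i}x}<\varepsilon$. Since $P_{B_i}x\in B_i$ for each $i$, the vector $\sum_{i=1}^nP_{B_i}x$ lies in the linear span of $\bigcup\mathcal B$; hence that span is norm dense in $X$, so $\mathcal B$ is a dense band decomposition.

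For the forward direction, suppose $\mathcal B$ is a dense band decomposition. I would fix $x\in X$ and $\varepsilon>0$, pick $y$ in the linear span of $\bigcup\mathcal B$ with $\norm{x-y}<\varepsilon$, and, grouping together terms that lie in the same band, write $y=\sum_{i=1}^ny_i$ with $y_i\in B_i$ for pairwise distinct $B_1,\dots,B_n\in\mathcal B$. The key observation is that pairwise disjointness of the bands forces the band projections to be pairwise orthogonal, i.e.\ $P_{B_i}P_{B_j}=0$ whenever $i\neq j$, because $B_j\subseteq B_i^d=\ker P_{B_i}$; hence $P:=\sum_{i=1}^nP_{B_i}$ satisfies $Py_i=y_i$ for every $i$, and therefore $Py=y$, that is, $(I-P)y=0$. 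Consequently
\[
  \bignorm{x-\sum_{i=1}^nP_{B_i}x}=\bignorm{(I-P)x}=\bignorm{(I-P)(x-y)}\le\norm{I-P}\,\norm{x-y},
\]
and once we know $0\le P\le I$, so that $\norm{I-P}\le1$, the left-hand side is $<\varepsilon$ and we are done.

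The one step deserving a word of justification — and essentially the only obstacle — is the inequality $0\le P\le I$. Positivity is obvious; for $P\le I$ I would use that a finite supremum of pairwise disjoint positive vectors coincides with their sum, so that for any $x\in X_+$ we have $Px=\sum_{i=1}^nP_{B_i}x=\bigvee_{i=1}^nP_{B_i}x\le x$, the last inequality holding because each $P_{B_i}x\le x$. Everything else is routine bookkeeping with band projections.
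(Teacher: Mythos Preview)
Your proof is correct and follows essentially the same approach as the paper's own argument: both pick an approximating vector $y=\sum_i y_i$ with $y_i\in B_i$, set $P=\sum_i P_{B_i}$, observe $(I-P)y=0$, and conclude $\bignorm{(I-P)x}\le\norm{x-y}<\varepsilon$. The only cosmetic difference is that the paper phrases the final step via the lattice-homomorphism identity $\bigabs{(I-P)z}=(I-P)\abs{z}\le\abs{z}$ (since $I-P$ is itself a band projection), whereas you deduce $\norm{I-P}\le 1$ from $0\le P\le I$; both routes are standard and equivalent here.
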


\begin{proof}
  Suppose that $\mathcal B$ is a dense band decomposition
  of $X$. Let $x\in X$ and $\varepsilon>0$. By assumption, we can find
  distinct bands $B_1,\dots,B_n$ and vectors $x_1\in B_1,\dots,x_n\in
  B_n$ such that
  \begin{math}
    \bignorm{x-\sum_{i=1}^nx_i}<\varepsilon.
  \end{math}
  Put $Q=I-\sum_{i=1}^nP_{B_i}$. Then $Q$ is also a band projection,
  hence it is a lattice homomorphism and $0\le Q\le I$. Note also that
  $Qx_i=0$ for $i=1,\dots,n$. We have
  \begin{displaymath}
    \bigabs{x-\sum_{i=1}^nx_i}
    \ge Q\bigabs{x-\sum_{i=1}^nx_i}
    =\bigabs{Qx-\sum_{i=1}^nQx_i}
    =\bigabs{x-\sum_{i=1}^nP_{B_i}x}.
  \end{displaymath}
  It follows that $\bignorm{x-\sum_{i=1}^nP_{B_i}x}<\varepsilon$.

  The converse implication is trivial.
\end{proof}

Our definition of a disjoint band decomposition is partially motivated
by following fact.

\begin{theorem}(\cite[Proposition~1.a.9]{Lindenstrauss:79})\label{wunit-decomp}
  Every order continuous Banach lattice admits a dense band
  decomposition $\mathcal B$ such that each band in $\mathcal B$ has a
  weak unit.
\end{theorem}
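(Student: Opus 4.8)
The plan is a Zorn's lemma construction that exploits two standard consequences of order continuity of $X$: first, $X$ is Dedekind complete, so every band in $X$ is a projection band; second, the band generated by any subset $S\subseteq X$ coincides with the norm closure of the ideal generated by $S$ (equivalently, closed ideals of $X$ are bands). If $X=\{0\}$ the statement is vacuous, so assume $X\ne\{0\}$.

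I would let $\mathcal F$ be the collection of all families of pairwise disjoint nonzero bands in $X$, each of which has a weak unit, ordered by inclusion. Picking any nonzero $e\in X_+$, the principal band $\{e\}^{dd}$ has $e$ as a weak unit, so $\mathcal F\ne\emptyset$. The union of a chain in $\mathcal F$ again lies in $\mathcal F$, since any two of its distinct members already lie together in some single member of the chain and are therefore disjoint; hence Zorn's lemma yields a maximal element $\mathcal B\in\mathcal F$. Each band in $\mathcal B$ is automatically a projection band because $X$ is order continuous, so $\mathcal B$ is a family of pairwise disjoint projection bands each with a weak unit, and it only remains to prove that $\mathcal B$ is dense.

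For this, set $J:=\Span\bigcup_{B\in\mathcal B}B$. Since every $B\in\mathcal B$ is an ideal, the ideal generated by $\bigcup\mathcal B$ contains $J$; conversely $J$ is already an ideal (a finite sum of ideals is an ideal, by the Riesz decomposition property), so $J$ is precisely the ideal generated by $\bigcup\mathcal B$, and therefore, by order continuity, its norm closure $\overline J$ is exactly the band $B_0$ generated by $\bigcup\mathcal B$. It remains to see $B_0=X$. If not, then since $B_0$ is a projection band we have $X=B_0\oplus B_0^d$ with $B_0^d\ne\{0\}$, so we may choose a nonzero $f\in(B_0^d)_+$. The principal band $\{f\}^{dd}$ is contained in $B_0^d$, hence disjoint from every member of $\mathcal B$ (each of which sits inside $B_0$), is nonzero, and has weak unit $f$; thus $\mathcal B\cup\{\{f\}^{dd}\}$ is a strictly larger member of $\mathcal F$, contradicting maximality. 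Hence $B_0=\overline J=X$, i.e. $J$ is norm dense, and $\mathcal B$ is the desired dense band decomposition. I expect the only genuine content — the one place where order continuity is indispensable — to be the identification of the band generated by $\bigcup\mathcal B$ with the closure of the ideal it generates; this rests on the structure theory of order continuous Banach lattices and fails in general (for instance in $\ell_\infty$). Granting that together with the projection property, the remaining steps are routine.
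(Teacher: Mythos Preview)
Your argument is correct. The paper does not supply its own proof of this statement; it is quoted verbatim as \cite[Proposition~1.a.9]{Lindenstrauss:79} and used as a black box. The Zorn's lemma construction you give---take a maximal pairwise disjoint family of principal bands, then use order continuity to identify the closure of the ideal they generate with the band they generate and derive a contradiction from maximality if this is proper---is precisely the standard proof one finds in that reference, so there is nothing to compare.

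One small stylistic remark: your closing sentence slightly undersells the role of order continuity. You flag the ``closed ideal $=$ band'' identification as the only essential use, but you also rely on Dedekind completeness (hence the projection property) earlier, and that too comes from order continuity. Both uses are genuine; neither is deeper than the other.
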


It is easy to see that if $X$ is an order continuous Banach lattice
and $\mathcal B$ is a pairwise disjoint collection of bands such that
$x=\sup\{P_{B}x\mid B\in\mathcal B\}$ for every $x\in X_+$ then
$\mathcal B$ is a dense band decomposition.

\begin{theorem}\label{band-decomp}
  Suppose that $\mathcal B$ is a dense band decomposition of a Banach
  lattice $X$. Then $x_\alpha\goesun x$ in $X$ iff $P_Bx_\alpha\goesun
  P_Bx$ in $B$ for each $B\in\mathcal B$. 
\end{theorem}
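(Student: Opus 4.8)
The plan is to reduce to the case $x=0$ with all $x_\alpha\ge 0$, dispose of the forward implication in one line, and then prove the reverse implication by approximating an arbitrary positive test vector by its projection onto finitely many of the bands in $\mathcal B$.

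First I would record two reductions. Since each $P_B$ is linear, $P_Bx_\alpha\goesun P_Bx$ in $B$ is the same as $P_B(x_\alpha-x)\goesun 0$ in $B$, and $x_\alpha\goesun x$ in $X$ is the same as $x_\alpha-x\goesun 0$ in $X$; so we may assume $x=0$. Since each $P_B$ is a lattice homomorphism and un-convergence respects lattice operations, $P_Bx_\alpha\goesun 0$ in $B$ is equivalent to $P_B\abs{x_\alpha}=\abs{P_Bx_\alpha}\goesun 0$ in $B$, and likewise in $X$; so we may also assume $x_\alpha\ge 0$ for every $\alpha$. With these reductions in place, the forward implication is immediate from \Cref{pb}: if $x_\alpha\goesun 0$ in $X$ then $P_Bx_\alpha\goesun 0$ in $B$ for every projection band $B$, in particular for every $B\in\mathcal B$.

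For the reverse implication, I would assume $x_\alpha\ge 0$ and $P_Bx_\alpha\goesun 0$ in $B$ for every $B\in\mathcal B$, fix $u\in X_+$ and $\varepsilon>0$, and show $\norm{x_\alpha\wedge u}\to 0$. By the lemma characterising dense band decompositions proved above, there are $B_1,\dots,B_n\in\mathcal B$ with $\norm{u-Qu}<\varepsilon$, where $Qu:=\sum_{i=1}^nP_{B_i}u$. The vectors $P_{B_i}u$ are pairwise disjoint and satisfy $0\le P_{B_i}u\le u$, so $Qu=\bigvee_{i=1}^nP_{B_i}u\le u$ and hence $u-Qu\ge 0$. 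Using the inequality $a\wedge(b+c)\le(a\wedge b)+c$ (valid for $c\ge 0$) and finite distributivity of $\wedge$ over $\vee$,
\[
  x_\alpha\wedge u\le(x_\alpha\wedge Qu)+(u-Qu)=\Bigl(\bigvee_{i=1}^n (x_\alpha\wedge P_{B_i}u)\Bigr)+(u-Qu)\le\sum_{i=1}^n\bigl(x_\alpha\wedge P_{B_i}u\bigr)+(u-Qu).
\]
For each $i$, the positive element $x_\alpha\wedge P_{B_i}u$ lies in the ideal $B_i$ and $P_{B_i}$ is a lattice homomorphism fixing $B_i$, so $x_\alpha\wedge P_{B_i}u=(P_{B_i}x_\alpha)\wedge(P_{B_i}u)$; since $P_{B_i}x_\alpha\goesun 0$ in $B_i$ and $P_{B_i}u\in(B_i)_+$, this converges to $0$ in norm. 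Taking norms above and letting each of the finitely many summands tend to $0$ gives $\limsup_\alpha\norm{x_\alpha\wedge u}\le\norm{u-Qu}\le\varepsilon$, and since $\varepsilon>0$ was arbitrary, $x_\alpha\wedge u\goesnorm 0$; as $u\in X_+$ was arbitrary, $x_\alpha\goesun 0$ in $X$.

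I do not expect a serious obstacle. The one delicate point is converting the norm estimate $\norm{u-Qu}<\varepsilon$ into an estimate for $\norm{x_\alpha\wedge u}$; this is exactly what the inequality $x_\alpha\wedge u\le(x_\alpha\wedge Qu)+(u-Qu)$ accomplishes, once one observes that $x_\alpha\wedge Qu$ splits as a disjoint join over the finitely many bands $B_1,\dots,B_n$ on which the projected net is controlled. The reductions, the identity $x_\alpha\wedge P_{B_i}u=(P_{B_i}x_\alpha)\wedge(P_{B_i}u)$, and the relevant properties of band projections all appear, in essence, already in the proof of the lemma characterising dense band decompositions.
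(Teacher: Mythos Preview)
Your proof is correct and follows essentially the same approach as the paper's: reduce to $x=0$ and $x_\alpha\ge 0$, invoke \Cref{pb} for the forward direction, and for the converse approximate $u$ by $\sum_{i=1}^nP_{B_i}u$, split $x_\alpha\wedge u$ accordingly, and use $x_\alpha\wedge P_{B_i}u=(P_{B_i}x_\alpha)\wedge(P_{B_i}u)$ to exploit the hypothesis on each band. The only cosmetic differences are that the paper uses the estimate $\bigabs{x_\alpha\wedge u-x_\alpha\wedge Qu}\le\abs{u-Qu}$ directly and fixes $\alpha_0$ to get a $2\varepsilon$ bound, whereas you use $a\wedge(b+c)\le(a\wedge b)+c$ and a $\limsup$ argument.
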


\begin{proof}
  Without loss of generality, $x=0$ and $x_\alpha\ge 0$ for every
  $\alpha$.  The forward implication follows immediately from
  \Cref{pb}. To prove the converse, suppose that
  $P_Bx_\alpha\goesun 0$ in $B$ for each $B\in\mathcal B$. Let
  $u\in X_+$; it suffices to show that $x_\alpha\wedge u\goesnorm 0$.
  Fix $\varepsilon>0$. Find $B_1,\dots,B_n\in\mathcal B$
  such that
  \begin{math}
    \bignorm{u-\sum_{i=1}^nP_{B_i}u}<\varepsilon.
  \end{math}
  Since $P_{B_i}x_\alpha\goesun 0$ in $B_i$ as $i=1,\dots,n$, we can
  find $\alpha_0$ such that
  \begin{math}
    \bignorm{P_{B_i}x_\alpha\wedge P_{B_i}u}<\frac{\varepsilon}{n}
  \end{math}
  for every $\alpha\ge\alpha_0$ and every $i=1,\dots,n$.
  It follows from $x_\alpha\wedge P_{B_i}u\in B_i$ that
  $x_\alpha\wedge P_{B_i}u=P_{B_i}x_\alpha\wedge P_{B_i}u$.
  Therefore,
  \begin{multline*}
      \norm{x_\alpha\wedge u}\le
      \Bignorm{x_\alpha\wedge\sum_{i=1}^nP_{B_i}u}+
      \Bignorm{u-\sum_{i=1}^nP_{B_i}u}
      \le\Bignorm{\sum_{i=1}^nx_\alpha\wedge P_{B_i}u}+\varepsilon\\
      =\Bignorm{\sum_{i=1}^nP_{B_i}x_\alpha\wedge P_{B_i}u}+\varepsilon
      \le n\cdot\frac{\varepsilon}{n}+\varepsilon
      \le 2\varepsilon.
  \end{multline*}
\end{proof}

\begin{remark}\label{atoms}
  Recall that a positive non-zero vector $a$ in a vector lattice $X$
  is an \term{atom} if the principal ideal $I_a$ generated by $a$
  coincides with $\Span a$. In this case, $I_a$ is a projection band,
  and the corresponding band projection $P_a$ has form $f_a\otimes a$
  for some positive functional $f_a$, that is, $P_ax=f_a(x)a$. We say
  that $X$ is \term{non-atomic} if it has no atoms. We say that $X$ is
  \term{atomic} if $X$ is the band generated by all the atoms. In the
  latter case,
\begin{math}
  x=\sup\{f_a(x)a\mid a\text{ is an atom}\}
\end{math}
for every $x\in X_+$.
See, e.g., \cite[p.~143]{Schaefer:74}.
\end{remark}

It follows that if $X$ is an order continuous atomic Banach lattice, the
family $\{I_a\mid a\text{ is an atom}\}$ is a dense band decomposition
of $X$. Applying \Cref{band-decomp}, we conclude that in such spaces
un-convergence is exactly the ``coordinate-wise'' convergence:

\begin{corollary}\label{un-atomic}
  Let $X$ be an atomic order continuous Banach lattice. Then
  $x_\alpha\goesun x$ iff $f_a(x_\alpha)\to f_a(x)$ for every atom $a$.
\end{corollary}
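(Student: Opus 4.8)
The plan is to obtain this as a direct consequence of \Cref{band-decomp}. First I would recall, as noted just before the statement, that an atomic order continuous Banach lattice $X$ admits the dense band decomposition $\mathcal B=\{I_a\mid a\text{ an atom}\}$: by \Cref{atoms} each $I_a$ is a projection band, distinct atom-ideals are pairwise disjoint, and $x=\sup\{f_a(x)a\mid a\text{ an atom}\}$ for every $x\in X_+$, so the remark quoted before \Cref{un-atomic} applies. \Cref{band-decomp} then gives that $x_\alpha\goesun x$ in $X$ if and only if $P_ax_\alpha\goesun P_ax$ in $I_a$ for every atom $a$, where $P_a$ is the band projection onto $I_a$.

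It remains to identify un-convergence inside a single $I_a$. Here $I_a=\Span a$ is one-dimensional and, by \Cref{atoms}, $P_ay=f_a(y)a$; in particular every positive element of $I_a$ is a multiple of $a$. Testing the definition of un-convergence in $I_a$ against $u=a\in (I_a)_+$ gives
\[
  \bignorm{\abs{P_ax_\alpha-P_ax}\wedge a}
  =\bigl(\abs{f_a(x_\alpha)-f_a(x)}\wedge 1\bigr)\norm{a},
\]
which tends to $0$ exactly when $f_a(x_\alpha)\to f_a(x)$; conversely $f_a(x_\alpha)\to f_a(x)$ clearly forces $P_ax_\alpha\goesnorm P_ax$ and hence $P_ax_\alpha\goesun P_ax$. (Equivalently, $a$ is a strong unit of $I_a$, so by \Cref{su} the un-topology on $I_a$ is the norm topology, and $P_ax_\alpha\to P_ax$ in norm iff the scalars $f_a(x_\alpha)$ converge to $f_a(x)$.) Combining the two equivalences yields the corollary.

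Since \Cref{band-decomp} carries all the weight, there is no genuine obstacle here; the only point requiring a moment's care is verifying that $\mathcal B$ really is a \emph{dense} band decomposition, and this is precisely the remark preceding the statement, which rests on the supremum representation of positive elements of an atomic vector lattice recorded in \Cref{atoms}.
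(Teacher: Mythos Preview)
Your proposal is correct and follows exactly the approach the paper intends: the paragraph preceding the corollary already observes that $\{I_a\mid a\text{ an atom}\}$ is a dense band decomposition of $X$, and the corollary is then stated as an immediate application of \Cref{band-decomp}. Your additional verification that un-convergence in the one-dimensional band $I_a$ coincides with scalar convergence of $f_a(x_\alpha)$ is a routine detail the paper leaves implicit.
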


\begin{remark}
  The order continuity assumption cannot be removed. Indeed,
  $\ell_\infty$ is atomic, the sequence $(e_n)$ converges to zero
  coordinate-wise, yet it is not un-null.
\end{remark}

The following results extends \cite[Proposition 6.2]{DOT}.

\begin{proposition}\label{w-un}
  The following are equivalent:
  \begin{enumerate}
  \item\label{w-un-net} $x_\alpha\goesw 0$ implies $x_\alpha\goesun 0$ for every
    net $(x_\alpha)$ in $X$;
  \item\label{w-un-seq} $x_n\goesw 0$ implies $x_n\goesun 0$ for every
    sequence $(x_n)$ in $X$;
  \item\label{w-un-aoc} $X$ is atomic and order continuous.
  \end{enumerate}
\end{proposition}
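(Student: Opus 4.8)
The plan is to prove $(i)\Rightarrow(ii)\Rightarrow(iii)\Rightarrow(i)$. The implication $(i)\Rightarrow(ii)$ is immediate, a sequence being a net. For $(iii)\Rightarrow(i)$, assume $X$ is atomic and order continuous and let $x_\alpha\goesw 0$. By \Cref{atoms} the band projection onto $I_a$ equals $f_a\otimes a$ for a positive functional $f_a\in X^*$, so $f_a(x_\alpha)\to 0$ for every atom $a$, and \Cref{un-atomic} then gives $x_\alpha\goesun 0$. Hence the whole content lies in $(ii)\Rightarrow(iii)$, which I would prove contrapositively: assuming that $X$ is not both atomic and order continuous, I will exhibit a weakly null sequence that is not un-null. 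Two transfer facts will be used: if $C$ is a closed sublattice of $X$ and $(c_n)$ is a sequence in $C$, then weak nullity of $(c_n)$ in $C$ implies weak nullity in $X$ (restrict functionals), and failure of un-nullity of $(c_n)$ in $C$ implies failure of un-nullity in $X$ (a test vector in $C_+$ is a test vector in $X_+$).

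Case 1: $X$ is not order continuous. By \cite[Theorem~4.14]{Aliprantis:06} --- the result already invoked in the proof of \Cref{disj} --- there exist $u\in X_+$ and a disjoint sequence $(x_n)$ in $X_+$ with $x_n\le u$ for all $n$ and $\norm{x_n}\not\to0$; after passing to a subsequence, $\norm{x_n}\ge\delta>0$. The point is that $(x_n)$ is automatically weakly null: for $0\le\varphi\in X^*$, disjointness gives $\sum_{k=1}^n\varphi(x_k)=\varphi\!\bigl(\bigvee_{k=1}^n x_k\bigr)\le\varphi(u)$, so $\varphi(x_n)\to0$, and a general functional is a difference of positive ones. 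On the other hand $x_n\wedge u=x_n$, so $\bignorm{x_n\wedge u}=\norm{x_n}\ge\delta$ and $(x_n)$ is not un-null. This contradicts $(ii)$.

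Case 2: $X$ is order continuous but not atomic. Let $A$ be the band generated by the atoms; then $A\ne X$, so $A^d\ne\{0\}$, and I pick $v\in A^d$ with $v>0$ and set $C=B_v$, the band generated by $v$. Then $C$ is a closed ideal of $X$, hence an order continuous Banach lattice with weak unit $v$, and $C$ is non-atomic, since an atom of $C$ would be an atom of $X$ lying in $A\cap A^d=\{0\}$. By the standard representation theorem for order continuous Banach lattices with a weak unit (see, e.g., \cite{Lindenstrauss:79}), $C$ is lattice isomorphic to a Banach function space $E$ with continuous inclusions $L_\infty(\mu)\subseteq E\subseteq L_1(\mu)$ over a probability space $(\Omega,\mu)$, with $v$ corresponding to $\one$; non-atomicity of $C$ forces $\mu$ to be non-atomic. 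On $(\Omega,\mu)$ I choose a Rademacher-type sequence $(r_n)$ of independent $\{-1,1\}$-valued functions, so that $r_n\in L_\infty(\mu)\subseteq E$ and $\abs{r_n}=\one$. Since $(r_n)$ is orthonormal in $L_2(\mu)$ and $L_\infty$-bounded, $\int r_n g\,d\mu\to0$ holds for $g\in L_\infty(\mu)$ and then, by density of $L_\infty(\mu)$ in $L_1(\mu)$, for every $g\in L_1(\mu)$, hence for every $g$ in the Köthe dual $E'=E^*$ (this is where order continuity of $E$ is used). Therefore $(r_n)\goesw0$ in $E\cong C$, and so in $X$; but $\bignorm{\abs{r_n}\wedge v}=\norm{v}\ne0$, so $(r_n)$ is not un-null in $C$, hence not un-null in $X$. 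This again contradicts $(ii)$, completing the proof.

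The soft step is Case 1, once one observes that an order-bounded disjoint sequence is weakly null. The main obstacle is Case 2: reducing a non-atomic order continuous lattice to a band $C$ carrying a function-space representation sandwiched between $L_\infty$ and $L_1$, and then verifying that a Rademacher sequence is genuinely weakly null in $C$ rather than merely in the ambient $L_1$ --- which is precisely the place where order continuity, via $E^*=E'\subseteq L_1$, enters.
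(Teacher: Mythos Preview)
Your argument is correct. The implications $(i)\Rightarrow(ii)$ and $(iii)\Rightarrow(i)$ match the paper's proof exactly: the latter is just \Cref{un-atomic} together with the observation that each coordinate functional $f_a$ is in $X^*$.

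For $(ii)\Rightarrow(iii)$ the paper does not give an argument at all; it simply cites \cite[Proposition~6.2]{DOT}. You instead supply a self-contained proof. Your Case~1 is the standard reduction via \cite[Theorem~4.14]{Aliprantis:06}, and the key observation that an order-bounded disjoint positive sequence is automatically weakly null is exactly what is needed. Your Case~2 is the more substantial part: passing to a non-atomic band $C=B_v$ with weak unit, invoking the AL-representation $L_\infty(\mu)\subseteq C\subseteq L_1(\mu)$, and then exhibiting a Rademacher-type sequence that is weakly null but has constant modulus $\one$. The delicate point---that weak nullity must be checked against $C^*$ and not merely $L_1(\mu)^*$---you handle correctly by noting that order continuity gives $C^*=C'\subseteq L_1(\mu)$ (since $\one\in C$), so that integration against any $g\in C^*$ is integration against an $L_1$-function, whence the uniformly bounded Rademacher sequence is weakly null by density of $L_\infty$ in $L_1$. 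Your transfer facts (weak nullity passes from $C$ to $X$; failure of un-nullity passes from $C$ to $X$) are both immediate and correctly stated.

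What your approach buys is a self-contained proof within the paper's own framework, reusing the AL-representation machinery already developed in Section~5 (and indeed the Rademacher idea reappears later in \Cref{Radem}). What the paper's approach buys is brevity, since the sequential equivalence $(ii)\Leftrightarrow(iii)$ was already established in~\cite{DOT}.
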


\begin{proof}
  \eqref{w-un-net}$\Rightarrow$\eqref{w-un-seq} is
  trivial. The implication
  \eqref{w-un-seq}$\Rightarrow$\eqref{w-un-aoc} is a part of
  \cite[Proposition 6.2]{DOT}. The implication
  \eqref{w-un-aoc}$\Rightarrow$\eqref{w-un-net} follows from \Cref{un-atomic}.
\end{proof}

\section{AL-representations and local convexity}

In this section, we will show that un-topology on an order continuous
Banach lattice $X$ is locally convex iff $X$ is atomic. Our main tool
is the relationship between un-convergence in $X$ and in an
AL-representation of $X$.

It was observed in \cite[Example~23]{Troitsky:04} that for a net
$(x_\alpha)$ in $L_p(\mu)$ where $\mu$ is a finite measure and
$1\le p<\infty$, one has $x_\alpha\goesun 0$ iff $x_\alpha\goesmu 0$
(i.e., the net converges to zero in measure). Note that this does not
extend to $\sigma$-finite measures.  Indeed, let
$X=L_p(\mathbb R)$ and let $x_n$ be the characteristic function of
$[n,n+1]$. Then $x_n\goesun 0$ but $(x_n)$ does not converge to zero
in measure. On the other hand, let $(x_\alpha)$ be a net in $L_p(\mu)$
where $\mu$ is a $\sigma$-finite measure, let $(\Omega_n)$ be a
countable partition of $\Omega$ into sets of finite measure; it
follows from \Cref{band-decomp} that $x_\alpha\goesun 0$ iff the
restriction of $x_\alpha$ to $\Omega_n$ converges to zero in measure
for every $n$.

Suppose that $X$ is an order continuous Banach lattice with a weak
unit $e$. By \cite[Theorem~1.b.14]{Lindenstrauss:79}, $X$ can be
represented as an ideal of $L_1(\mu)$ for some probability measure
$\mu$. More precisely, there is a lattice isomorphism from $X$ onto a
norm-dense ideal of $L_1(\mu)$; with a slight abuse of notation we
will view $X$ itself as an ideal of $L_1(\mu)$. Moreover, this
representation may be chosen so that $e$ corresponds to $\one$,
$L_\infty(\mu)$ is a norm-dense ideal in $X$, and both inclusions in
$L_\infty(\mu)\subseteq X\subseteq L_1(\mu)$ are continuous. We call
$L_1(\mu)$ an \term{AL-representation} for $X$ and $e$. Let $(x_n)$ be
a sequence in $X$. It was shown in \cite[Remark~4.6]{GTX} that
$x_n\goesuo 0$ in $X$ iff $x_n\goesae 0$ in $L_1(\mu)$. It was shown
in \cite[Theorem~4.6]{DOT} that $x_n\goesun 0$ in $X$ iff
$x_n\goesmu 0$ in $L_1(\mu)$. Since un-topology and the topology of
convergence in measure are both metrizable on $X$ because $X$ has a
weak unit, it follows that these two topologies coincide on $X$. In
particular, $x_\alpha\goesun 0$ in $X$ iff $x_\alpha\goesmu 0$ in
$L_1(\mu)$ for every net $(x_\alpha)$ in $X$. This may also be deduced
from Amemiya's Theorem (see, e.g., Theorem~2.4.8 in
\cite{Meyer-Nieberg:91}) as follows:
\begin{displaymath}
  x_\alpha\goesun 0\text{ in }X\quad\Leftrightarrow\quad
  \norm{x_\alpha\wedge e}_X\to 0
  \quad\overset{\text{Amemiya}}{\Leftrightarrow}\quad
  \norm{x_\alpha\wedge\one}_{L_1}\to 0\quad\Leftrightarrow\quad
  x_\alpha\goesmu 0\text{ in }L_1(\mu)
\end{displaymath}
for every net $(x_\alpha)$ in $X_+$.

\begin{proposition}\label{un-nonat-conv}
  Let $X$ be a non-atomic order continuous Banach lattice and
  $W$ a neighborhood of zero for un-topology. If $W$ is convex then $W=X$.
\end{proposition}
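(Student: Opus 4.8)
The plan is to show that if $W$ is a convex un-neighborhood of zero in a non-atomic order continuous Banach lattice $X$, then $W$ absorbs every vector and is moreover already all of $X$. The key is to reduce to the AL-representation and exploit that convergence in measure on a non-atomic probability space is nowhere locally convex.

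First I would reduce to the case where $X$ has a weak unit. By \Cref{wunit-decomp}, $X$ admits a dense band decomposition $\mathcal B$ where each band has a weak unit. Each band $B \in \mathcal B$ is itself non-atomic and order continuous, and by \Cref{band-decomp} the un-topology on $X$ is determined coordinatewise along $\mathcal B$. If I can show that a convex un-neighborhood of zero inside each such $B$ must equal $B$, then given a convex un-neighborhood $W$ of zero in $X$: for each $B$, the band projection $P_B$ is un-continuous by \Cref{pb}, so $W \cap B = P_B(W \cap B)$-type reasoning shows $W$ contains a convex un-neighborhood of zero in $B$, hence contains $B$; since $W$ is convex and contains the linear span of all the bands, and that span is norm dense hence un-dense, and $W$ contains a norm ball (actually one must argue $W$ is norm-closed or pass to its closure carefully)—here I would instead argue directly that $W$ contains every finite sum $\sum P_{B_i}x$, and since those are un-dense and we can also absorb the norm-small remainder, conclude $W = X$. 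So the crux is the weak-unit case.

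So assume $X$ has a weak unit $e$ and fix an AL-representation $X \subseteq L_1(\mu)$ with $e \leftrightarrow \one$ and $\mu$ a non-atomic probability measure, so that on $X$ the un-topology coincides with the topology of convergence in measure (as recorded in the paragraph preceding the statement). Then $W$ contains a basic un-neighborhood, hence a set of the form $\{x : \norm{\,\abs{x}\wedge\one\,}_{L_1} < \delta\} \cap X$ for some $\delta > 0$; equivalently $W$ contains $\{x \in X : \mu(\abs{x} > \eta) < \eta\}$ for some small $\eta>0$. Now take an arbitrary $x \in X_+$. Using non-atomicity of $\mu$, partition $\Omega$ into $N$ sets $\Omega_1,\dots,\Omega_N$ of measure $1/N < \eta$ each, with $N$ large. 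Then $x = \sum_{j=1}^N N \cdot \bigl(\tfrac1N x\one_{\Omega_j}\bigr)$, and each vector $N x \one_{\Omega_j}$ lies in $X$ (as $X$ is an ideal) and satisfies $\mu\bigl(N x\one_{\Omega_j} \ne 0\bigr) \le \mu(\Omega_j) = 1/N < \eta$, so $N x\one_{\Omega_j}$ lies in the small un-neighborhood, hence in $W$. Since $W$ is convex, the average $\frac1N \sum_{j=1}^N \bigl(N x \one_{\Omega_j}\bigr) = x$ lies in $W$. As $x \in X_+$ was arbitrary and $W$ is solid-ish only via convexity—actually for general $x$ apply this to $\abs{x}$ and note $W$ convex and symmetric (it contains a symmetric neighborhood, but $W$ itself need not be symmetric; so instead decompose $x = x^+ - x^-$, handle $x^+$ and $-x^-$ separately after also running the argument with $-x^-$ in place of $x$, using convexity with the $0 \in W$ point)—we get $W = X$.

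The main obstacle I anticipate is the bookkeeping in passing from the weak-unit case to the general case via the band decomposition: convexity interacts awkwardly with infinite sums, so I must be careful to only ever use finitely many bands at a time and absorb the tail using that $W$ contains a genuine un-neighborhood (which, by \Cref{metriz}-type considerations, need not be norm-bounded, so "norm-small tail lies in $W$" requires the un-smallness of the tail, i.e. one controls $\norm{(x - \sum P_{B_i} x)\wedge u}$ for the relevant $u$'s). A cleaner route, which I would try first, is to skip the reduction entirely: prove directly that any convex un-neighborhood $W$ of $0$ in $X$ absorbs each $x\in X_+$ by a "local" version of the measure-partition trick applied inside the principal band $B_x$ generated by $x$ — but since $B_x$ need not be a projection band in general, the band-decomposition detour may be unavoidable. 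In any case, the heart of the argument is the partition-and-average step on a non-atomic probability space, which is where non-atomicity is used essentially.
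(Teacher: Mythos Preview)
Your core partition-and-average idea is exactly the paper's, and your weak-unit argument is correct (indeed slightly slicker than the paper's: you partition $x$ itself and use that small-support functions lie in any measure-neighborhood, whereas the paper partitions $\one=e$ and invokes Amemiya's theorem to control the $X$-norm of $(n\chi_{A_{n,i}})\wedge u$).

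The one real gap is your handling of the general (non-weak-unit) case. Your global reduction via \Cref{wunit-decomp} is, as you admit, incomplete: getting from ``$W$ contains each band'' to ``$W=X$'' via convexity and density needs care you never supply. But your ``cleaner route'' is the right one, and your reason for dismissing it is simply wrong: in an \emph{order continuous} Banach lattice every band is a projection band (this is standard; order continuity implies Dedekind $\sigma$-completeness, hence the principal projection property). So for each $e\in X_+$ the principal band $B_e$ \emph{is} a projection band, carries $e$ as a weak unit, and admits an AL-representation. This is precisely the paper's route: fix $e\in X_+$, work inside $B_e\subseteq L_1(\mu)$, run the partition trick there to land the pieces in $V_{\varepsilon,u}\subseteq W$ (noting $V_{\varepsilon,u}$ is computed in $X$, but $(n\chi_{A_{n,i}})\wedge u\in B_e$ so the AL-representation applies), and conclude $e\in W$ by convexity. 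No global decomposition is needed.

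For the passage from $X_+\subseteq W$ to $W=X$: the basic neighborhood $V_{\varepsilon,u}$ is symmetric (it depends only on $\abs{x}$), so the same argument gives $X_-\subseteq W$, and then $x=\tfrac12\bigl(2x^+ + 2(-x^-)\bigr)$ finishes it by convexity --- exactly as the paper does and as you sketched.
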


\begin{proof}
  Fix $e\in X_+$; we will show that $e\in W$. We know that
  $V_{\varepsilon,u}\subseteq W$ for some $\varepsilon>0$ and
  $u>0$. Consider the principal band $B_e$. Since $X$ is order
  continuous, $B_e$ is a projection band in $X$; let $P_e$ be the
  corresponding band projection. Furthermore, $B_e$ is a non-atomic
  order continuous Banach lattice with a weak unit. Let
  $L_1(\Omega,\mathcal F,\mu)$ be an AL-representation for $B_e$ with
  $e=\one$. Note that the measure $\mu$ is non-atomic because if a
  measurable set $A$ were an atom for $\mu$ then its characteristic
  function $\chi_A$ would be an atom in $X$. Fix $n\in\mathbb N$.
  Using the non-atomicity of $\mu$, we find a measurable partition
  $A_{n,1},\dots,A_{n,n}$ of $\Omega$ with $\mu(A_{n,i})=\frac1n$ as
  $i=1,\dots,n$; see, e.g., Exercise~2 in~\cite[p.~174]{Halmos:70}.
  Since $L_\infty(\mu)\subseteq B_e\subseteq L_1(\mu)$, we may view
  the characteristic functions $\chi_{A_{n,i}}$ as elements of
  $B_e$. Consider the vectors $(n\chi_{A_{n,i}})\wedge u$ as
  $i=1,\dots,n$; they belong to $B_e$, so that we may view them as
  functions in $L_1(\mu)$. Let $g_n$ be the function in this list
  whose norm in $X$ is maximal; if there are more than one, pick any
  one. Repeating this construction for every $n\in\mathbb N$, we
  produce a sequence $(g_n)$ in $[0,u]\cap B_e$. It follows that
  $g_n\le P_eu$ for every $n$. Since $P_eu$ may be viewed as an
  element of $L_1(\mu)$ and the measure of the support of $g_n$ tends
  to zero, it follows that $\norm{g_n}_{L_1}\to 0$.  Amemiya's Theorem
  yields $\norm{g_n}_X\to 0$. Fix $n$ such that
  $\norm{g_n}_X<\varepsilon$. It follows from the definition of $g_n$
  that $\bignorm{(n\chi_{A_{n,i}})\wedge u}_X<\varepsilon$ as
  $i=1,\dots,n$, so that $n\chi_{A_{n,i}}$ is in $V_{\varepsilon,u}$
  and, therefore, in $W$. Since $W$ is convex and
  \begin{displaymath}
    e=\one=\frac1n\sum_{i=1}^nn\chi_{A_{n,i}},
  \end{displaymath}
  we have $e\in W$. Therefore, $X_+\subseteq W$. Furthermore,
  it follows from $n\chi_{A_{n,i}}\in V_{\varepsilon,u}$ that
  $-n\chi_{A_{n,i}}\in V_{\varepsilon,u}$ for all $i=1,\dots,n$ and,
  therefore, $-e\in W$. This yields 
  $X_-\subseteq W$. Finally, for every $x\in X$ we have
  $x=\frac12\bigl(2x^++2(-x^-)\bigr)$, so that $x\in W$.

\end{proof}

\begin{theorem}\label{loc-conv}
  Let $X$ be an order continuous Banach lattice. Un-topology on $X$ is
  locally convex iff $X$ is atomic.
\end{theorem}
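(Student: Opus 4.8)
The plan is to prove the two implications of the equivalence separately: the implication ``$X$ atomic $\Rightarrow$ un-topology locally convex'' from \Cref{un-atomic}, and its contrapositive from \Cref{un-nonat-conv} via a reduction to the non-atomic band of $X$.

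Suppose first that $X$ is atomic (it is order continuous by the standing hypothesis). By \Cref{un-atomic}, a net $(x_\alpha)$ satisfies $x_\alpha\goesun x$ iff $f_a(x_\alpha)\to f_a(x)$ for every atom $a$, where $f_a$ is the coordinate functional of \Cref{atoms}. Hence un-topology has exactly the same convergent nets, with the same limits, as the weak topology $\tau=\sigma\bigl(X,\{f_a\mid a\text{ an atom}\}\bigr)$; since a topology is determined by which nets converge to which points, un-topology $=\tau$, and $\tau$ is locally convex because the finite intersections $\bigcap_{i=1}^n\{x\mid\abs{f_{a_i}(x)}<\varepsilon\}$ form a base of convex zero-neighborhoods. (Alternatively, without quoting that fact: \Cref{un-atomic} shows each $f_a$ is un-continuous, so $\tau$ is coarser than un-topology, while $\tau$-convergence implying un-convergence forces the reverse inclusion for linear topologies.)

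For the converse I would prove the contrapositive: if $X$ is order continuous but not atomic, then un-topology on $X$ is not locally convex. Let $Y=Y_a^d$, where $Y_a$ is the band generated by all atoms of $X$; order continuity makes $Y$ a projection band, with band projection $P$. Then $Y\neq\{0\}$ because $X$ is not atomic, and $Y$ is a non-atomic order continuous Banach lattice: an atom of the ideal $Y$ would be an atom of $X$, hence would lie in $Y_a$ as well as in $Y$, i.e.\ in $Y_a\cap Y=\{0\}$, which is absurd. Now assume, for contradiction, that un-topology on $X$ is locally convex. I claim un-topology on $Y$ is then locally convex as well: given a neighborhood $V$ of $0$ for un-topology on $Y$, the set $P^{-1}(V)$ is a neighborhood of $0$ for un-topology on $X$ by \Cref{pb}, hence contains a convex un-neighborhood $W$ of $0$; then $W\cap Y$ is convex, it is a neighborhood of $0$ for un-topology on $Y$ because the inclusion $Y\hookrightarrow X$ is un-continuous by \Cref{sublat}\eqref{sublat-projb}, and $W\cap Y\subseteq P^{-1}(V)\cap Y=V$ since $P$ fixes $Y$ pointwise. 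But \Cref{un-nonat-conv} says the only convex un-neighborhood of $0$ in $Y$ is $Y$ itself, so local convexity of $Y$ forces every un-neighborhood of $0$ in $Y$ to equal $Y$; that is, un-topology on $Y$ is indiscrete, contradicting the facts that it is Hausdorff (if $\abs{y}\wedge u=0$ for all $u\in Y_+$, take $u=\abs{y}$ to get $y=0$) and that $Y\neq\{0\}$. Hence $X$ must be atomic.

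I expect the step requiring the most care to be the transfer of local convexity from $X$ down to the non-atomic band $Y$; it rests on two facts already in the paper — un-continuity of the band projection $P\colon X\to Y$ (\Cref{pb}) and of the inclusion $Y\hookrightarrow X$ (\Cref{sublat}\eqref{sublat-projb}) — together with \Cref{un-nonat-conv}. The remaining ingredients — the atomic/non-atomic band decomposition of an order continuous Banach lattice, the Hausdorffness of un-topology, and the identification of un-topology with a weak topology in the atomic case — are routine.
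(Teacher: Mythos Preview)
Your proof is correct and follows essentially the same route as the paper: both directions rest on \Cref{un-atomic} and \Cref{un-nonat-conv}, and the reduction in the non-atomic case is to a non-atomic projection band where \Cref{un-nonat-conv} applies. The only cosmetic differences are that the paper works with a band $B_e$ (for some $e$ with $B_e$ non-atomic) rather than the full band $Y_a^d$, and that the paper compresses your $P^{-1}(V)$ transfer into the single observation that, for a projection band, un-topology on the band coincides with the relative un-topology inherited from $X$ (one inclusion is \Cref{sublat}\eqref{sublat-projb}, the other is the trivial direction noted just before \Cref{ex:c0}); once that identification is made, local convexity passes down immediately and the contradiction with \Cref{un-nonat-conv} is reached without spelling out the indiscrete-versus-Hausdorff step.
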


\begin{proof}
  Suppose that $X$ is atomic. By \Cref{un-atomic}, un-topology is
  determined by the family of seminorms $x\mapsto
  \bigabs{f_a(x)}$ where $a$ is an atom of $X$; hence the topology is
  locally convex.

  Suppose that un-topology is locally convex but $X$ is not atomic. It
  follows that there is $e\in X_+$ such that $B_e$ is non-atomic.
  By \Cref{sublat}, un-topology on $B_e$ agrees
  with the relative topology induced on $B_e$ by un-topology on
  $X$; in particular, it is locally convex. On the other hand,
  \Cref{un-nonat-conv} asserts that this topology on $B_e$ has no
  proper convex neighborhoods; a contradiction. 
\end{proof}

\subsection*{Un-continuous functionals}
\Cref{loc-conv} allows us to describe un-continuous linear
functionals. For a functional $\varphi\in X^*$, we say that $\varphi$
is \term{un-continuous} if it is continuous with respect to the
un-topology on $X$ or, equivalently, if $x_\alpha\goesun 0$ implies
$\varphi(x_\alpha)\to 0$.

\begin{proposition}\label{un-cont-ideal}
  The set of all un-continuous functionals in $X^*$ is an ideal.
\end{proposition}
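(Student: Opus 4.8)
The plan is to verify directly that the collection $J$ of un-continuous functionals is a linear subspace of $X^*$ which is also order-closed in the sense required of an ideal: if $\abs{\psi}\le\abs{\varphi}$ with $\varphi\in J$ and $\psi\in X^*$, then $\psi\in J$. Linearity is immediate, since if $\varphi,\psi$ are un-continuous and $x_\alpha\goesun 0$, then $\varphi(x_\alpha)\to 0$ and $\psi(x_\alpha)\to 0$, so $(\lambda\varphi+\mu\psi)(x_\alpha)\to 0$. The substantive point is solidity, and here the natural reduction is to show that $\varphi\in J$ implies $\abs{\varphi}\in J$; combined with the elementary observation that $0\le\psi\le\rho$ with $\rho\in J$ forces $\psi\in J$ (because $0\le\psi(x_\alpha)\le\rho(x_\alpha)$ when $x_\alpha\ge 0$, and one passes to $\abs{x_\alpha}$ using that un-convergence respects the modulus), this gives the full statement: from $\abs{\psi}\le\abs{\varphi}$ and $\abs{\varphi}\in J$ we get $\abs{\psi}\in J$, and then $\psi^+,\psi^-\le\abs{\psi}$ lie in $J$, hence so does $\psi$.

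So the crux is: if $\varphi$ is un-continuous, then $\abs{\varphi}$ is un-continuous. I would prove the contrapositive. Suppose $\abs{\varphi}$ is not un-continuous; then there is a net $(x_\alpha)$ with $x_\alpha\goesun 0$ but $\abs{\varphi}(x_\alpha)\not\to 0$. Since un-convergence respects lattice operations we also have $\abs{x_\alpha}\goesun 0$, and $\abs{\varphi}(\abs{x_\alpha})\ge\abs{\abs{\varphi}(x_\alpha)}$, so after replacing $x_\alpha$ by $\abs{x_\alpha}$ we may assume $x_\alpha\ge 0$ and, passing to a subnet, that $\abs{\varphi}(x_\alpha)\ge\delta>0$ for all $\alpha$. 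Now use the Riesz–Kantorovich formula
\begin{displaymath}
  \abs{\varphi}(x_\alpha)=\sup\bigl\{\varphi(y)\mid \abs{y}\le x_\alpha\bigr\}.
\end{displaymath}
For each $\alpha$ choose $y_\alpha$ with $\abs{y_\alpha}\le x_\alpha$ and $\varphi(y_\alpha)>\delta/2$. Then $\abs{y_\alpha}\le x_\alpha$ together with $x_\alpha\goesun 0$ and the solidity of each $V_{\varepsilon,u}$ (noted at the start of Section~2) gives $y_\alpha\goesun 0$, while $\varphi(y_\alpha)\not\to 0$; hence $\varphi$ is not un-continuous. This is the step I expect to carry the weight, and the delicate point is simply making sure the Riesz–Kantorovich selection interacts correctly with solidity of the basic neighborhoods, which it does because $\abs{y_\alpha}\le x_\alpha$ implies $\abs{y_\alpha}\wedge u\le x_\alpha\wedge u$ for every $u\in X_+$, so $\bignorm{\abs{y_\alpha}\wedge u}\to 0$ whenever $\bignorm{x_\alpha\wedge u}\to 0$.

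Assembling the pieces: $J$ is a subspace by linearity; it is solid because $\varphi\in J\Rightarrow\abs{\varphi}\in J$ by the argument above, and then any $\psi$ with $\abs{\psi}\le\abs{\varphi}$ satisfies $\psi^+,\psi^-\le\abs{\varphi}\in J$, giving $\psi^+,\psi^-\in J$ via the order-interval estimate and hence $\psi=\psi^+-\psi^-\in J$. Therefore $J$ is an ideal in $X^*$. Note that no local convexity or order continuity hypothesis is needed for this statement; the reference to \Cref{loc-conv} in the surrounding text pertains to the finer description of $J$ (e.g.\ identifying it with a concrete band) rather than to the bare fact that it is an ideal.
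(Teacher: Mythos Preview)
Your proof is correct and follows essentially the same approach as the paper: both arguments hinge on the Riesz--Kantorovich formula together with the solidity of the basic neighborhoods $V_{\varepsilon,u}$ to pass from un-continuity of $\varphi$ to un-continuity of $\abs{\varphi}$. The only cosmetic difference is that the paper works directly with the neighborhood definition of continuity (finding $V_{\varepsilon,u}$ on which $\abs{\varphi}$ is bounded by $\delta$), while you phrase the same idea as a contrapositive in terms of nets; the content is identical.
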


\begin{proof}
  It is straightforward to verify that this set is a linear
  subspace. Suppose that $\varphi$ in $X^*$ is un-continuous; we will
  show that $\abs{\varphi}$ is also un-continuous. Fix $\delta>0$. One
  can find $\varepsilon>0$ and $u>0$ such that
  $\bigabs{\varphi(x)}<\delta$ whenever $x\in V_{\varepsilon,u}$. Fix
  $x\in V_{\varepsilon,u}$.  Since $V_{\varepsilon,u}$ is solid,
  $\abs{y}\le\abs{x}$ implies $y\in V_{\varepsilon,u}$ and, therefore,
  $\bigabs{\varphi(y)}<\delta$. By the Riesz-Kantorovich formula, we
  get
  \begin{displaymath}
    \bigabs{\abs{\varphi}(x)}\le\abs{\varphi}\bigl(\abs{x}\bigr)
    =\sup\bigl\{\bigabs{\varphi(y)}\mid\abs{y}\le\abs{x}\bigr\}
    \le\delta.
  \end{displaymath}
  It follows that $\abs{\varphi}$ is un-continuous. Hence, the set of
  all un-continuous functionals in $X^*$ forms a sublattice. It is
  easy to see that if $\varphi\in X^*_+$ is un-continuous and
  $0\le\psi\le\varphi$ then $\psi$ is also un-continuous; this
  completes the proof.
\end{proof}

Recall that if $a$ is an atom then $f_a$
stands for the corresponding ``coordinate functional''.

\begin{corollary}\label{un-dual}
  Suppose that $X$ is an order continuous Banach lattice and
  $\varphi\in X^*$ is un-continuous.
  \begin{enumerate}
  \item\label{dual-atomic} If $X$ is atomic then
    $\varphi=\lambda_1f_{a_1}+\dots+\lambda_nf_{a_n}$, where
    $\lambda_1,\dots,\lambda_n\in\mathbb R$ and $a_1,\dots,a_n$ are
    atoms;
  \item If $X$ is non-atomic then $\varphi=0$.
  \end{enumerate}
\end{corollary}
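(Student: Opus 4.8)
The plan is to treat the two cases by different techniques, both drawing on results of this section. Part~(ii) is the quicker one. If $X$ is non-atomic and order continuous and $\varphi\in X^*$ is un-continuous, then $W=\{x\in X\mid\abs{\varphi(x)}<1\}$ is an un-open neighborhood of zero (because $\varphi$ is un-continuous) and it is convex (because $\varphi$ is linear). \Cref{un-nonat-conv} then forces $W=X$, so $\abs{\varphi(x)}<1$ for all $x\in X$; applying this to $nx$ and letting $n\to\infty$ gives $\varphi=0$.

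For part~(i), assume $X$ is atomic and order continuous and fix a maximal pairwise disjoint family $(a_i)_{i\in I}$ of normalized atoms. I will use two consequences of order continuity. First, by \Cref{atoms} together with the fact that an increasing order-bounded net converges in norm to its supremum, for every $x\in X_+$ the net of finite partial sums $\sum_{i\in F}f_{a_i}(x)a_i$ converges to $x$ in norm; since $\varphi$ is norm-continuous, this yields $\varphi(x)=\sum_{i\in I}f_{a_i}(x)\varphi(a_i)$ for every $x\in X_+$. Second, for any $u\in X_+$ and any sequence of distinct atoms $(a_{i_k})$, the partial sums $\sum_{k\le N}f_{a_{i_k}}(u)a_{i_k}$ form an increasing norm-convergent, hence norm-Cauchy, sequence bounded above by $u$, so $f_{a_{i_k}}(u)\to 0$.

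The crux is to show that an un-continuous $\varphi$ has $\varphi(a_i)\ne 0$ for only finitely many $i$; once this is known, the identity $\varphi(x)=\sum_{i\in I}f_{a_i}(x)\varphi(a_i)$ has only finitely many nonzero terms and exhibits $\varphi$ as the required finite combination $\lambda_1 f_{a_1}+\dots+\lambda_n f_{a_n}$. Suppose instead that $\varphi(a_{i_k})\ne 0$ for a sequence of distinct atoms, and put $\lambda_k=1/\varphi(a_{i_k})$. Direct the finite subsets of $\mathbb N$ by inclusion; for each such $F$ let $k(F)=\min(\mathbb N\setminus F)$ and set $x_F=\lambda_{k(F)}a_{i_{k(F)}}$. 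For $u\in X_+$, since $a_{i_{k(F)}}$ is an atom one computes $\abs{x_F}\wedge u=\min\bigl(\abs{\lambda_{k(F)}},f_{a_{i_{k(F)}}}(u)\bigr)a_{i_{k(F)}}$, so $\bignorm{\abs{x_F}\wedge u}\le f_{a_{i_{k(F)}}}(u)$; as $k(F)\to\infty$ along the net, the second fact above gives $x_F\goesun 0$. But $\varphi(x_F)=1$ for every $F$, contradicting un-continuity of $\varphi$, and the contradiction proves the finiteness claim.

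I expect the main obstacle to be convincing oneself that the net $(x_F)$ really is un-null: the scalars $\lambda_k$ may be unbounded, which suggests that scaling a single atom ought to destroy un-convergence. The point to stress is that the truncation $\wedge u$ clips the spike---$\abs{x_F}\wedge u$ is dominated by $f_{a_{i_{k(F)}}}(u)\,a_{i_{k(F)}}$ no matter how large $\lambda_{k(F)}$ is---and order continuity drives these dominating quantities to zero. The remaining ingredients (convexity in part~(ii), passing the finite sum through $\varphi$ in part~(i)) are routine.
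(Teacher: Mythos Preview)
Your argument is correct, and part~(ii) is identical to the paper's. For part~(i) the overall strategy matches the paper's---show that $\varphi$ vanishes on all but finitely many atoms by exhibiting a un-null object on which $\varphi$ is identically~$1$---but the execution differs in two places. First, your net indexed by finite subsets of~$\mathbb N$ is more elaborate than needed: the plain sequence $x_k=\lambda_k a_{i_k}$ already satisfies $\bignorm{\abs{x_k}\wedge u}\le f_{a_{i_k}}(u)\to 0$ by the very computation you carry out, so it is un-null; the paper uses this sequence and in fact bypasses the computation entirely by invoking \Cref{un-atomic}. Second, to pass from ``finitely many nonzero $\varphi(a_i)$'' to the explicit representation, the paper first reduces to $\varphi\ge 0$ via \Cref{un-cont-ideal} and then uses the band decomposition $X=B_F\oplus B_{A\setminus F}$ together with order continuity of~$\varphi$; your route through the norm-convergent expansion $\varphi(x)=\sum_i f_{a_i}(x)\varphi(a_i)$ avoids both the positivity reduction and the appeal to order continuity of the functional, which is a mild simplification.
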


\begin{proof}
  By \Cref{un-cont-ideal}, we may assume that  $\varphi\ge 0$;
  otherwise we consider $\varphi^+$ and $\varphi^-$. 

  Suppose $X$ is atomic; let $A$ be a maximal disjoint family of
  atoms. We claim that the set $F:=\{a\in A\mid\varphi(a)\ne 0\}$ is
  finite. Indeed, otherwise, take a sequence $(a_n)$ of distinct atoms
  in $F$ and put $x_n=\frac{1}{\varphi(a_n)}a_n$. Then $x_n\goesun 0$
  by \Cref{un-atomic}, yet $\varphi(x_n)=1$; a contradiction. This
  proves the claim.

  Since $X$ is order continuous, it follows from Remark~\ref{atoms}
  that $X$ has a disjoint band decomposition
  $X=B_F\oplus B_{A\setminus F}$. Since $\varphi(a)=0$ for all
  $a\in A\setminus F$, $\varphi$ vanishes on the ideal
  $I_{A\setminus F}$ and, therefore, on $B_{A\setminus F}$ because
  $\varphi$ is order continuous. On the other hand, since $F$ is
  finite, $B_F=\Span F$ and, therefore, is finite-dimensional. It
  follows that $\varphi$ is a linear combination of
  $\{f_a\mid a\in F\}$.

  Suppose now that $X$ is non-atomic. Let $W=\varphi^{-1}(-1,1)$. Then
  $W$ is a convex neighborhood of zero for the un-topology. By
  \Cref{un-nonat-conv}, $W=X$. This easily implies $\varphi=0$.
\end{proof}

Case~\eqref{dual-atomic} of the preceding corollary essentially says
that every un-continuous functional on an atomic order continuous
space has finite support.

\begin{example}
  Let $X=\ell_2$. By \Cref{un-dual}, the set of all un-continuous
  functionals in $X^*$ may be identified with $c_{00}$, the linear
  subspace of all sequences with finite support. Clearly, it is
  neither norm closed nor order closed; it is not even
  $\sigma$-order closed in $X^*$.
\end{example}

\begin{example}
  Let $X=C_0(\Omega)$ where $\Omega$ is a locally compact Hausdorff
  topological space. It was observed in \cite[Example~20]{Troitsky:04}
  that the un-topology in $X$ agrees with the topology of uniform
  convergence on compact subsets of $\Omega$. 

  Let $\varphi\in X^*_+$. By the Riesz Representation Theorem, there
  exists a regular Borel measure $\mu$ such that $\varphi(f)=\int
  f\,d\mu$ for every $f\in X$; see, e.g.,
  \cite[Theorem~III.5.7]{Conway:90}. An argument similar to the proof
  of  \cite[Proposition~IV.4.1]{Conway:90} shows that $\varphi$ is
  un-continuous iff $\mu$ has compact support.
\end{example}

\section{Un-completeness}

Throughout this section, $X$ is assumed to be an order continuous
Banach lattice. Since un-topology is linear, one can talk about
un-Cauchy nets. That is, a net $(x_\alpha)$ is un-Cauchy if for every
un-neighborhood $U$ of zero there exists $\alpha_0$ such that
$x_\alpha-x_\beta\in U$ whenever
$\alpha,\beta\ge\alpha_0$.
We investigate whether $X$
itself or some ``nice'' subset of $X$ is un-complete. First, we
observe that the entire space is un-complete only when $X$ is
finite-dimensional.

\begin{lemma}\label{disj-sum}
  Let $(x_n)$ be a positive disjoint sequence in an order continuous
  Banach lattice $X$ such that $(x_n)$ is not norm null. Put
  $s_n=\sum_{i=1}^nx_i$. Then $(s_n)$ is un-Cauchy but not un-convergent.
\end{lemma}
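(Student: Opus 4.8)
The plan is to show separately that $(s_n)$ is un-Cauchy and that it cannot un-converge. First I would observe that, since the $x_i$ are pairwise disjoint, for $m>n$ we have $s_m-s_n=\sum_{i=n+1}^m x_i$, and this is a partial sum of a disjoint sequence; in particular $|s_m-s_n|\wedge u$ is controlled by a tail of the disjoint sequence $(x_i)$. More precisely, I would fix $u\in X_+$ and $\varepsilon>0$ and use that $(x_i)$ is disjoint, hence un-null by \Cref{disj} (here order continuity is used), to conclude that $\norm{x_i\wedge u}\to 0$. The slightly delicate point is passing from control of each $x_i\wedge u$ to control of $\bignorm{\bigl(\sum_{i=n+1}^m x_i\bigr)\wedge u}$: because the $x_i$ are disjoint, $\bigl(\sum_{i=n+1}^m x_i\bigr)\wedge u=\bigvee_{i=n+1}^m (x_i\wedge u)$, and a finite disjoint supremum in a Banach lattice, while not equal to the sum of norms, is still dominated appropriately — in fact for a disjoint family $\bignorm{\bigvee_{i} y_i}$ need not be small even if each $\norm{y_i}$ is small, so this is exactly where I would instead argue via \Cref{disj} applied to the disjoint net of partial tails, or more directly note that the tails $\sum_{i>n} x_i$ themselves are not order-bounded in general. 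Let me reconsider: the cleanest route is to observe that $s_m-s_n$ for $m,n$ both large is, up to reindexing, again a tail sum, and to apply directly that $(s_n)$ being un-Cauchy is equivalent to: for every $u\in X_+$, the net $\bigl(\bignorm{(s_m-s_n)\wedge u}\bigr)$ tends to $0$. Since $X$ is order continuous and the band generated by $(x_i)$ carries a natural $\ell$-sum-type structure, I would use that $\bignorm{(s_m - s_n)\wedge u} = \bignorm{\bigvee_{i=n+1}^{m}(x_i\wedge u)}$ and that in an order continuous Banach lattice a disjoint sequence $(x_i\wedge u)$ with $x_i\wedge u\goesnorm 0$ has $\bignorm{\bigvee_{i>n}(x_i\wedge u)}\to 0$; this last fact is the technical heart and follows because $\bigvee_{i>n}(x_i\wedge u)\le u$ is order bounded, so the disjoint sequence of its "pieces" is norm null and order continuity forces the tail norms to zero (otherwise one extracts a disjoint order-bounded sequence that is not norm null, contradicting order continuity via \cite[Theorem~4.14]{Aliprantis:06}).

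Granting that $(s_n)$ is un-Cauchy, I would then show it is not un-convergent. Suppose toward a contradiction that $s_n\goesun x$ for some $x\in X$. Since $(s_n)$ is increasing and positive, \Cref{monot}\eqref{monot-un} applies: $s_n\uparrow x$ and, crucially, $s_n\goesnorm x$. But $s_n-s_{n-1}=x_n$, so $\norm{x_n}=\norm{s_n-s_{n-1}}\to 0$, i.e. $(x_n)$ is norm null, contradicting the hypothesis that $(x_n)$ is not norm null. Hence $(s_n)$ cannot un-converge.

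The main obstacle, as flagged above, is the un-Cauchy claim: one must be careful that finite disjoint suprema of small-norm vectors can have large norm in a general Banach lattice, so the estimate $\bignorm{(s_m-s_n)\wedge u}\to 0$ must be extracted from order continuity applied to the order-bounded disjoint data $\bigl(x_i\wedge u\bigr)_i$ inside $[0,u]$, rather than from any naive triangle-inequality bound. Once that lemma on tail norms of order-bounded disjoint sequences in order continuous spaces is in hand (it is essentially a restatement of \cite[Theorem~4.14]{Aliprantis:06}), both halves of the proof are short, with the non-convergence half relying entirely on the monotone un-convergence fact \Cref{monot}\eqref{monot-un}.
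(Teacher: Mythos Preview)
Your non-convergence argument is essentially the paper's: if $s_n\goesun x$ then \Cref{monot}\eqref{monot-un} forces $s_n\goesnorm x$, hence $x_n=s_n-s_{n-1}\goesnorm 0$, a contradiction.

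For the un-Cauchy part, your approach is correct in outline but takes a longer route than the paper, and your justification of the key step is not quite right as written. The paper simply observes that, by disjointness, $s_n\wedge u=\sum_{i=1}^n(x_i\wedge u)$, so $(s_n\wedge u)$ is increasing and order bounded by $u$; order continuity (Nakano's Theorem, \cite[Theorem~4.9]{Aliprantis:06}) makes it norm Cauchy, and the disjointness identity
\[
  s_m\wedge u - s_n\wedge u=\sum_{i=n+1}^m(x_i\wedge u)=(s_m-s_n)\wedge u=\abs{s_m-s_n}\wedge u
\]
finishes the job in one line. Your route via tail suprema $w_n=\bigvee_{i>n}(x_i\wedge u)$ also works, but the parenthetical justification you give (``otherwise one extracts a disjoint order-bounded sequence that is not norm null'') does not close the gap: the $x_i\wedge u$ \emph{are} norm null, and the tails $w_n$ are not disjoint, so \cite[Theorem~4.14]{Aliprantis:06} does not apply to them directly. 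The clean way to finish your argument is to note that $w_n\downarrow$ and $\inf_n w_n=0$ (since $\inf_n w_n$ is disjoint from every $x_j\wedge u$ yet lies in the band they generate), whence $\norm{w_n}\to 0$ by order continuity. Once that is said, both approaches are equivalent; the paper's version just avoids the detour through infinite suprema by working with the increasing partial sums instead.
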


\begin{proof}
  The sequence $(s_n)$ is monotone increasing and does not converge in
  norm; hence it is not un-convergent by
  \Cref{monot}\eqref{monot-un}. To show that $(s_n)$ is un-Cauchy, fix
  any $\varepsilon>0$ and a non-zero $u\in X_+$. Since $x_i$'s are
  disjoint, we have $s_n\wedge u=\sum_{i=1}^n(x_i\wedge u)$. The
  sequence $(s_n\wedge u)$ is increasing and order bounded, hence is
  norm Cauchy by Nakano's Theorem; see
  \cite[Theorem~4.9]{Aliprantis:06}. We can find $n_0$ such that
  \begin{math}
    \bignorm{s_m\wedge u-s_n\wedge u}<\varepsilon
  \end{math}
  whenever $m\ge n\ge n_0$. Observe that
  \begin{displaymath}
    s_m\wedge u-s_n\wedge u=
    \sum_{i=n+1}^m(x_i\wedge u)=(s_m-s_n)\wedge u
    =\abs{s_m-s_n}\wedge u.
  \end{displaymath}
  It follows that $\bignorm{\abs{s_m-s_n}\wedge u}<\varepsilon$, so
  that $s_m-s_n\in V_{\varepsilon,u}$.
\end{proof}

\begin{proposition}
  Let $X$ be an order continuous Banach lattice. $X$ is un-complete
  iff $X$ is finite-dimensional.
\end{proposition}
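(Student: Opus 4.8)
The plan is to prove both implications, with the substantive direction handled by contraposition using \Cref{disj-sum}.

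For the easy direction, suppose $X$ is finite-dimensional. Then $X$ has a strong unit (it is lattice isomorphic to $\mathbb R^n$, so any strictly positive vector is a strong unit), hence by \Cref{su} the un-topology coincides with the norm topology. Since $X$ is norm complete, it is un-complete.

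For the converse I would argue the contrapositive: assuming $\dim X=\infty$, I produce a un-Cauchy sequence that fails to un-converge. The key fact is that an infinite-dimensional Archimedean vector lattice contains an infinite sequence of pairwise disjoint nonzero vectors; this is standard (a vector lattice in which every disjoint system of nonzero elements has at most $n$ terms is at most $n$-dimensional). Pick such a sequence $(y_n)$, replace each $y_n$ by $\abs{y_n}/\norm{y_n}$ — this stays pairwise disjoint, positive, and has $\norm{x_n}=1$, so in particular $(x_n)$ is a positive disjoint sequence that is not norm null. Now \Cref{disj-sum} applies directly: the partial sums $s_n=\sum_{i=1}^n x_i$ form a un-Cauchy sequence that is not un-convergent. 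Therefore $X$ is not un-complete.

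The only point needing care is the existence of the infinite disjoint sequence, which is a classical fact about infinite-dimensional vector lattices; everything else is immediate from \Cref{su} and \Cref{disj-sum}, so I anticipate no real obstacle.
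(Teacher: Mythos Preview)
Your proof is correct and follows essentially the same approach as the paper: both directions use \Cref{su} for the finite-dimensional case and \Cref{disj-sum} applied to a normalized disjoint sequence for the infinite-dimensional case.
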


\begin{proof}
  If $X$ is finite-dimensional then it has a strong unit, so that
  un-topology agrees with norm topology and is, therefore,
  un-complete. Suppose now that $\dim X=\infty$. Then $X$ contains a
  disjoint normalized positive sequence. By \Cref{disj-sum}, $X$ is
  not un-complete.
\end{proof}

\begin{example}
  Let $X=L_p$ with $1<p<\infty$. Pick $0\le x\in L_1\setminus L_p$ and
  put $x_n=x\wedge(n\one)$. It is easy to see that $(x_n)$ is
  un-Cauchy in $L_p$, yet it does not un-converge in $L_p$.
\end{example}

Even when the entire space is not un-complete, the closed unit ball
$B_X$ may still be un-complete; that is, complete in the topology
induced by un-topology on $X$.  Since $B_X$ is
un-closed, it is un-complete iff every norm bounded
un-Cauchy net in $X$ is un-convergent.  The following theorem
should be compared with \cite[Theorem~4.7]{GaoX:14}, where a similar
statement was proved for uo-convergence.

\begin{theorem}\label{BX-complete}
  Let $X$ be an order continuous Banach lattice. Then $B_X$ is
  un-complete iff $X$ is a KB-space.
\end{theorem}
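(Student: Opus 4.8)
The plan is to prove both implications. For the easy direction, suppose $X$ is a KB-space. Then a norm bounded un-Cauchy net should be shown to be un-convergent. By \Cref{KP}-type reasoning or directly, I would first reduce to sequences: since un-topology on a KB-space is... well, not metrizable in general, so this reduction is not automatic. Instead I would argue as follows. Let $(x_\alpha)$ be a norm bounded un-Cauchy net, say $\norm{x_\alpha}\le 1$. The idea is to produce a candidate limit. Since $X$ is a KB-space, $X$ is a band in $X^{**}$, and order intervals are weakly compact; more usefully, a KB-space is weakly sequentially complete. I would try to extract from un-Cauchyness that the net is also Cauchy in a weaker sense that forces convergence. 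Concretely: passing to positive and negative parts and using that lattice operations are un-continuous, assume $x_\alpha\ge 0$. For each fixed $u\in X_+$, the net $(x_\alpha\wedge u)$ is norm Cauchy (this is what un-Cauchy gives on the order interval $[0,u]$, where un agrees with norm), hence norm convergent to some $y_u\in[0,u]$. These $y_u$ are consistent: $y_{u\wedge v}=y_u\wedge y_v$ and $y_u\uparrow$ in $u$. Using order continuity and norm boundedness, $\sup_u y_u$ exists in $X$ (here is where KB enters: the increasing net $(y_u)$ is norm bounded by $\liminf\norm{x_\alpha}\le 1$... more precisely each $y_u$ has norm $\le 1$, so by the KB property the net $y_u$ converges in norm to some $x\in X_+$). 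Then one checks $x_\alpha\goesun x$: given $u$, $\norm{\abs{x_\alpha-x}\wedge u}\le\norm{x_\alpha\wedge u-y_u}+\norm{y_u-x\wedge u}\to 0$, using $x\wedge u=\lim_v y_{u\wedge v}$ appropriately. This direction is mostly bookkeeping.

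For the converse, suppose $B_X$ is un-complete but $X$ is not a KB-space. Since $X$ is order continuous and not a KB-space, $X$ contains a sublattice lattice-isomorphic to $c_0$ (by the characterization of KB-spaces, e.g. \cite[Theorem~4.60]{Aliprantis:06}); moreover one can take the copy of $c_0$ spanned by a normalized positive disjoint sequence $(d_n)$ with $\norm{\sum_{i=1}^n d_i}$ bounded. The plan is to use \Cref{disj-sum}: the partial sums $s_n=\sum_{i=1}^n d_i$ form a sequence that is un-Cauchy but not un-convergent. But $(s_n)$ is norm bounded (this is exactly the $c_0$-estimate: $\norm{s_n}\le C$ for all $n$), so it lies in $C\cdot B_X$, which is un-complete. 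That forces $(s_n)$ to un-converge — contradiction. So $X$ must be a KB-space.

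The main obstacle I anticipate is the forward direction — showing that a norm bounded un-Cauchy \emph{net} (not sequence) in a KB-space un-converges. Extracting the limit $y_u=\lim_\alpha(x_\alpha\wedge u)$ on each order interval is fine, but assembling the $y_u$'s into a single element $x\in X$ and verifying $x_\alpha\goesun x$ requires care: one needs that $\{y_u:u\in X_+\}$ is upward directed with a supremum, that this supremum is in $X$ (the KB property applied to the norm-bounded increasing net), and that $x\wedge u=y_u$ for each $u$, which in turn uses order continuity to pass limits through $\wedge$. The disjoint-sequence trick of \Cref{disj-sum} makes the reverse direction routine once the $c_0$-embedding is invoked, so essentially all the real work is in constructing the limit for the ``if'' part.

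Another viable route for the forward direction, perhaps cleaner: reduce to the case where $X$ has a weak unit by using a dense band decomposition $\mathcal B$ into bands with weak units (\Cref{wunit-decomp}); by \Cref{band-decomp}, $(x_\alpha)$ is un-Cauchy in $X$ iff $(P_B x_\alpha)$ is un-Cauchy in each $B$, and each $B$ is again a KB-space with a weak unit, hence its un-topology is metrizable (\Cref{metriz}) and coincides with convergence in measure on an $L_1(\mu)$-representation; completeness of $B_X$ transfers to $B_B$; then invoke the sequential statement $L_1(\mu)$-side. One would still need to glue the limits $x_B\in B$ back to an $x\in X$ with $\norm{x}\le 1$, which is once more where the KB property is essential — but this packaging may avoid some of the ad hoc lattice estimates.
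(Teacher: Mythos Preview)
Your reverse direction is correct and matches the paper exactly: embed $c_0$, take partial sums of the copy of the unit basis, and invoke \Cref{disj-sum}. Your second route for the forward direction is also essentially the paper's proof: reduce via \Cref{wunit-decomp} to bands $B$ with weak units, where un-topology is metrizable by \Cref{metriz}; handle the sequential case in each $B$ through the AL-representation (the paper extracts an a.e.-convergent subsequence and invokes the uo-completeness theorem \cite[Theorem~4.7]{GaoX:14}); then glue the band-wise limits into a single $x\in X$ using the KB property and \Cref{band-decomp}. One wording slip: you say ``completeness of $B_X$ transfers to $B_B$,'' but $B_X$ is what you are proving complete; what you mean is that each $B$ is itself a KB-space with a weak unit, so $B_B$ is un-complete by the metrizable case.

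Your first route, however, has a genuine gap at the final step. The construction of $x=\sup_u y_u$ with $y_u=\lim_\alpha(x_\alpha\wedge u)$ and the identity $x\wedge u=y_u$ are fine, but the estimate
\[
  \bignorm{\abs{x_\alpha-x}\wedge u}\le\norm{x_\alpha\wedge u-y_u}+\norm{y_u-x\wedge u}
\]
is false: in $\mathbb R$ with $x_\alpha=3$, $x=2$, $u=1$ the left side equals $1$ while the right side equals $0$. Knowing $x_\alpha\wedge u\to x\wedge u$ for every $u$ is strictly weaker than $\abs{x_\alpha-x}\wedge u\to 0$. The route can be repaired, but it is more than bookkeeping: using the pointwise identity $(a-v)^+\wedge u=a\wedge(v+u)-a\wedge v$ for $a,u,v\ge 0$, one gets
\[
  \abs{x_\alpha-x}\wedge u
  \le\bigabs{x_\alpha\wedge(x+u)-x}
  +\bigl(x_\alpha\wedge(x+2u)-x_\alpha\wedge(x+u)\bigr),
\]
and both terms tend to zero in norm because $x_\alpha\wedge v\to x\wedge v=x$ for $v=x+u$ and $v=x+2u$. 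With this correction your first argument becomes a genuinely different and more self-contained proof than the paper's, avoiding both the AL-representation and the external uo-completeness result.
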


\begin{proof}
  Suppose $X$ is not KB. Then $X$ contains a lattice copy of
  $c_0$. Let $(x_n)$ be the sequence in $X$ corresponding to the unit
  basis of $c_0$. Let $s_n=\sum_{i=1}^nx_i$. Clearly, $(s_n)$ is norm
  bounded. However, by \Cref{disj-sum}, $(s_n)$ is un-Cauchy but not
  un-convergent.

  Suppose now that $X$ is a KB-space. First, we consider the case when
  $X$ has a weak unit. In this case, un-topology on $X$ and,
  therefore, on $B_X$, is metrizable by Theorem~\ref{metriz}. Hence,
  it suffices to prove that $B_X$ is sequentially un-complete. Let
  $(x_n)$ be a sequence in $B_X$ which is un-Cauchy in $X$. Let
  $L_1(\mu)$ be an AL-representation for $X$. It follows that $(x_n)$
  is Cauchy with respect to convergence in measure in $L_1(\mu)$. By
  \cite[Theorem~2.30]{Folland:99}, there is a subsequence $(x_{n_k})$
  which converges a.e.  It follows that $(x_{n_k})$ is uo-Cauchy in
  $X$ by \cite[Remark~4.6]{GTX}. Then \cite[Theorem~4.7]{GaoX:14}
  yields that $x_{n_k}\goesuo x$ for some $x\in X$. It follows that
  $x_{n_k}\goesun x$. 
  Since $(x_n)$ is un-Cauchy, this yields that
  $x_n\goesun x$.

  Now consider the general case. Let $X$ be a KB-space and
  $(x_\alpha)$ a net in $B_X$ such that $(x_\alpha)$ is un-Cauchy in
  $X$; we need to prove that the net is un-convergent. We may assume
  without loss of generality that $x_\alpha\ge 0$ for every $\alpha$;
  otherwise, consider $(x_\alpha^+)$ and $(x_\alpha^-)$, which are
  also un-Cauchy because
  $\abs{x_\alpha^+-x_\beta^+}\le\abs{x_\alpha-x_\beta}$ and 
  $\abs{x_\alpha^--x_\beta^-}\le\abs{x_\alpha-x_\beta}$.
  By \Cref{wunit-decomp}, there exists a dense band decomposition
  $\mathcal B$ of $X$ such that each $B$ in $\mathcal B$ has a
  weak unit. Put
  \begin{displaymath}
    \mathcal C=\bigl\{B_1\oplus\dots\oplus B_n\mid
    B_1,\dots,B_n\in\mathcal B\bigr\}.
  \end{displaymath}
  Note that $\mathcal C$ is a family of bands with weak
  units. Furthermore, $\mathcal C$ is a directed set when ordered by
  inclusion, so the family of band projections
  $(P_C)_{C\in\mathcal C}$ may be viewed as a net.

  For every $C\in\mathcal C$, the net $(P_Cx_\alpha)$ is un-Cauchy by
  \Cref{pb}. Since $C$ has a weak unit, the first part of the proof
  yields that $(P_Cx_\alpha)$ un-converges to some positive vector $x_C$ in
  $C$. This produces a net $(x_C)_{C\in\mathcal C}$. It is easy to
  verify that $x_C=x_{B_1}+\dots+x_{B_n}$ whenever
  $C=B_1\oplus\dots\oplus B_n$ for some $B_1,\dots,B_n\in\mathcal B$.
  It follows that the net $(x_C)_{C\in\mathcal C}$ is increasing. On
  the other hand,
  \begin{math}
    \norm{x_C}\le\liminf_\alpha\norm{P_Cx_\alpha}\le 1,
  \end{math}
  so that this net is norm bounded. Since $X$ is a KB-space, the net
  $(x_C)_{C\in\mathcal C}$ converges in norm to some $x\in X$. 

  Fix $B\in\mathcal B$. On one hand, norm continuity of $P_B$ yields
  $\lim_{C\in\mathcal C}P_Bx_C=P_Bx$. On the other hand, for every
  $C\in\mathcal C$ with $B\subseteq C$ we have $P_Bx_C=x_B$, so that
  $\lim_{C\in\mathcal C}P_Bx_C=x_B$. It follows that $P_Bx=x_B$, so
  that $P_Bx_\alpha\goesun P_Bx$ for every $B\in\mathcal B$. Now
  \Cref{band-decomp} yields $x_\alpha\goesun x$.
\end{proof}

The assumption that $X$ is order continuous cannot be removed: for
example, $\ell_\infty$ is not a KB-space, yet its closed unit ball is
un-complete (because the un and the norm topologies on $\ell_\infty$
agree).

\begin{example}
  The following examples show that in general $B_X$ in
  \Cref{BX-complete} cannot be replaced with an arbitrary convex closed
  bounded set. Let $X=\ell_1$; let $C$ be the set of all vectors in $B_X$ whose
  coordinates sum up to zero. Clearly, $C$ is convex, closed, and
  bounded. Let $x_n=\frac12(e_1-e_n)$. Then $(x_n)$ is a sequence in
  $C$ which un-converges to $\frac12e_1$ which is not in $C$. Thus,
  $C$ is not un-closed in $X$; in particular, $C$ is not un-complete.

  It is easy to construct a similar example in $X=L_1$; take
  \begin{math}
    C=\bigl\{x\in B_X\mid \int x=0\bigr\}
  \end{math}
  and put
  $x_n=\chi_{[0,\frac12]}-\frac{n}{2}\chi_{[\frac12,\frac12+\frac{1}{n}]}$,
  $n\ge 2$.
\end{example}

\begin{proposition}\label{sets-un-closed}
  Suppose that $X^*$ is order continuous and $C$ is a norm closed
  convex norm bounded subset of $X$. Then $C$ is un-closed.
\end{proposition}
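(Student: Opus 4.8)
The plan is to deduce un-closedness of $C$ from weak closedness, using the principle that, under the hypothesis on $X^*$, an un-convergent norm bounded net converges weakly to the same limit. Once this is established the proposition follows at once: if $(x_\alpha)$ is a net in $C$ with $x_\alpha\goesun x$, then $(x_\alpha-x)$ is norm bounded (since $C$ is) and un-null, hence weakly null, so $x_\alpha\goesw x$; and a norm closed convex set is weakly closed by Mazur's theorem, so $x\in C$. For the argument it is slightly more convenient to proceed by contradiction, because then the separation theorem hands us the functional we need together with a \emph{uniform} estimate.

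So suppose $\norm{y}\le M$ for all $y\in C$, let $x_\alpha\in C$ with $x_\alpha\goesun x$, and assume toward a contradiction that $x\notin C$. By the Hahn--Banach separation theorem there are $\varphi\in X^*$ and $c\in\mathbb R$ with $\varphi(x)>c\ge\varphi(y)$ for all $y\in C$. Set $\delta=\varphi(x)-c>0$ and $y_\alpha=x_\alpha-x$. Then $y_\alpha\goesun 0$, the net $(y_\alpha)$ is norm bounded, and $\varphi(y_\alpha)=\varphi(x_\alpha)-\varphi(x)\le-\delta$ for every $\alpha$, so $\abs{\varphi}(\abs{y_\alpha})\ge\abs{\varphi(y_\alpha)}\ge\delta$ for every $\alpha$. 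Passing to $z_\alpha:=\abs{y_\alpha}$ (lattice operations are un-continuous), we obtain a norm bounded un-null net $(z_\alpha)$ of positive vectors with $\abs{\varphi}(z_\alpha)\ge\delta$ for all $\alpha$.

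Now I would apply \Cref{KP} to $(z_\alpha)$: this produces an increasing sequence of indices $(\alpha_k)$ and a disjoint sequence $(d_k)$ with $z_{\alpha_k}-d_k\goesnorm 0$. Since $z_{\alpha_k}\ge 0$ we have $\abs{z_{\alpha_k}-d_k^+}\le\abs{z_{\alpha_k}-d_k}\goesnorm 0$, and $(d_k^+)$ is again disjoint, so we may replace $d_k$ by $d_k^+$ and assume $d_k\ge 0$. Then $(d_k)$ is a norm bounded positive disjoint sequence (eventually $\norm{d_k}\le M+\norm{x}+1$), and from $\abs{\varphi}(d_k)=\abs{\varphi}(z_{\alpha_k})-\abs{\varphi}(z_{\alpha_k}-d_k)$, together with $\abs{\varphi}(z_{\alpha_k})\ge\delta$ and $\abs{\varphi}(z_{\alpha_k}-d_k)\to 0$, we get $\liminf_k\abs{\varphi}(d_k)\ge\delta>0$; in particular $(d_k)$ is not weakly null. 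This contradicts the classical fact that a Banach lattice has order continuous dual if and only if every norm bounded disjoint sequence in it is weakly null (see, e.g., \cite{Meyer-Nieberg:91}). Therefore $x\in C$, and $C$ is un-closed.

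I expect the only genuine obstacle to be this last duality fact linking order continuity of $X^*$ with the weak behaviour of norm bounded disjoint sequences; the remainder of the proof — Hahn--Banach separation, the reduction to positive vectors and to positive parts, and the application of \Cref{KP} — is routine bookkeeping. If one wishes to keep the paper self-contained, the single direction that is actually used (namely, $X^*$ order continuous $\Rightarrow$ every norm bounded disjoint sequence in $X$ is weakly null) can be proved directly, but doing so does not appear to shorten the exposition. One should also note at the outset that $C$ may be assumed nonempty, the statement being vacuous otherwise.
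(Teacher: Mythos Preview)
Your proof is correct, and the high-level strategy matches the paper's: show that a norm bounded un-convergent net converges weakly to the same limit, then invoke Mazur's theorem. The difference lies in how this key implication is obtained. The paper simply cites \cite[Theorem~6.4]{DOT}, which asserts directly that if $X^*$ is order continuous then norm bounded un-null nets are weakly null; this makes the paper's proof a three-line affair. You instead reprove that implication in situ: separating by Hahn--Banach to isolate a single functional, passing to moduli, invoking \Cref{KP} to replace the un-null sequence by an asymptotically equal disjoint one, and then appealing to the classical characterisation of order continuity of $X^*$ via weak nullity of norm bounded disjoint sequences. Your route is longer but more self-contained within the paper (it uses only \Cref{KP} and a textbook fact about $X^*$), and it makes transparent \emph{why} order continuity of the dual enters; the paper's version is terser but outsources the substance to \cite{DOT}. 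Both are perfectly sound.
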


\begin{proof}
  Suppose that $x_\alpha\goesun x$ for a net $(x_\alpha)$ in $C$ and a
  vector $x$ in $X$. Since $(x_\alpha)$ is norm bounded and $X^*$ is
  order continuous, \cite[Theorem~6.4]{DOT} guarantees that
  $(x_\alpha)$ converges to $x$ weakly. Since $C$ is convex and
  closed, it is weakly closed, hence $x\in C$.
\end{proof}

\begin{corollary}
  Let $X$ be a reflexive Banach lattice and $C$ a closed convex norm
  bounded subset of $X$. Then $C$ is un-complete.
\end{corollary}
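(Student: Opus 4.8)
The plan is to deduce this directly from \Cref{BX-complete} and \Cref{sets-un-closed}, once the relevant structure of reflexive Banach lattices is recorded. First I would note two classical facts. A reflexive Banach lattice contains no closed sublattice linearly isomorphic to $c_0$ (reflexivity passes to closed subspaces and $c_0$ is not reflexive), hence it is a KB-space and, in particular, order continuous. Likewise $X^*$ is reflexive, hence also a KB-space and therefore order continuous. So both hypotheses we shall invoke --- ``$X$ is an order continuous KB-space'' and ``$X^*$ is order continuous'' --- hold automatically; see, e.g., \cite[Theorem~4.70]{Aliprantis:06}.

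Next, since $C$ is norm bounded there is $r>0$ with $C\subseteq rB_X$. By \Cref{BX-complete}, $B_X$ is un-complete; since un-topology is a linear topology, multiplication by $r$ is a uniform homeomorphism of $X$ onto itself, so $rB_X$ is un-complete as well. On the other hand, $X^*$ is order continuous and $C$ is norm closed, convex and norm bounded, so \Cref{sets-un-closed} gives that $C$ is un-closed in $X$, a fortiori un-closed as a subset of $rB_X$. A un-closed subset of the un-complete set $rB_X$ is un-complete, which is the assertion.

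Spelled out at the level of nets: given an un-Cauchy net $(x_\alpha)$ in $C$, it lies in $rB_X$; un-completeness of $rB_X$ yields $x_\alpha\goesun x$ for some $x\in rB_X\subseteq X$; and since $C$ is un-closed, $x\in C$. There is no genuine obstacle here: the corollary is merely a repackaging of \Cref{BX-complete} and \Cref{sets-un-closed}, the only points requiring verification being that reflexivity forces $X$ to be a KB-space and $X^*$ to be order continuous, both of which are standard.
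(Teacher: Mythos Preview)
Your proof is correct and follows essentially the same approach as the paper: both invoke \Cref{BX-complete} for un-convergence of a bounded un-Cauchy net and \Cref{sets-un-closed} to ensure the limit lies in $C$, after recording that reflexivity makes $X$ a KB-space and $X^*$ order continuous. The only difference is cosmetic: you make the scaling $C\subseteq rB_X$ explicit, whereas the paper argues directly at the level of nets.
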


\begin{proof}
  Since $X$ is reflexive, $X$ is a KB-space and $X^*$ is order
  continuous. Let $(x_\alpha)$ be a un-Cauchy net in
  $C$. \Cref{BX-complete} yields that
  $x_\alpha\goesun x$ for some $x\in X$, while \Cref{sets-un-closed}
  implies that $x\in C$.
\end{proof}

\section{Un-compact sets}

The main result of this section is \Cref{BX-compact}, which asserts
that $B_X$ is (sequentially) un-compact iff $X$ is an atomic
KB-space. We start with some auxiliary results.  The following theorem
shows that, under certain assumptions, un-compactness is a ``local''
property.

\begin{theorem}\label{comp-local}
  Let $X$ be a KB-space, $\mathcal B$ a dense band decomposition of
  $X$, and $A$ a un-closed norm bounded subset of $X$. Then $A$ is
  un-compact iff $P_B(A)$ is un-compact in $B$ for every $B\in\mathcal B$. 
\end{theorem}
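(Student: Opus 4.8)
The plan is to prove both directions, with the forward direction being the more delicate one. For the easy direction, suppose $A$ is un-compact; fix $B\in\mathcal B$. Since $P_B\colon X\to B$ is un-continuous as a map into $B$ (by \Cref{pb}, applied with the convention that $x_\alpha\goesun x$ in $X$ implies $P_Bx_\alpha\goesun P_Bx$ in $B$), and $P_B(A)$ is the continuous image of the un-compact set $A$, it is un-compact in $B$. (If one prefers the sequential formulation throughout — which is natural here since all the spaces involved will turn out to be metrizable in the relevant cases — one extracts from a sequence in $P_B(A)$ a preimage sequence in $A$, passes to an un-convergent subsequence, and pushes it forward by the un-continuous $P_B$.)

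For the converse, assume $P_B(A)$ is un-compact in $B$ for every $B\in\mathcal B$, and let $(x_\alpha)$ be a net in $A$; we must produce an un-convergent subnet with limit in $A$. The idea is to diagonalize over the directed family $\mathcal C=\{B_1\oplus\dots\oplus B_n\mid B_i\in\mathcal B\}$ of bands with weak units, exactly as in the proof of \Cref{BX-complete}. For each fixed $B\in\mathcal B$, the net $(P_Bx_\alpha)$ lies in the un-compact set $P_B(A)$; using the fact that on a band with a weak unit the un-topology is metrizable (\Cref{metriz}), hence un-compactness coincides with sequential un-compactness, we pass (via an iterated subnet / universal-subnet argument) to a subnet along which $P_Bx_\alpha$ un-converges in $B$ for \emph{every} $B\in\mathcal B$ simultaneously; call the limits $x_B\in P_B(A)\subseteq B$. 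As in \Cref{BX-complete}, additivity over finite direct sums shows $(x_C)_{C\in\mathcal C}$ is increasing, and $\norm{x_C}\le\liminf\norm{P_Cx_\alpha}$ is bounded by $\sup_{a\in A}\norm a<\infty$; since $X$ is a KB-space this net converges in norm to some $x\in X$. Norm continuity of each $P_B$ then forces $P_Bx=x_B$, so $P_Bx_\alpha\goesun P_Bx$ for every $B\in\mathcal B$, and \Cref{band-decomp} gives $x_\alpha\goesun x$ along the subnet. Finally $x\in A$ because $A$ is un-closed.

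The main obstacle is the passage from ``each $(P_Bx_\alpha)$ has an un-convergent subnet in $P_B(A)$'' to ``a single subnet works for all $B\in\mathcal B$ at once.'' When $\mathcal B$ is countable this is a routine diagonal argument over metrizable compacta; in general $\mathcal B$ need not be countable, so the clean fix is to replace ``subnet'' by ``universal subnet'': pass once to a universal subnet $(x_{\alpha_\lambda})$ of $(x_\alpha)$. Then for every $B\in\mathcal B$ the net $(P_Bx_{\alpha_\lambda})$ is a universal net in the un-compact space $P_B(A)$, hence un-convergent there. This handles all bands simultaneously with a single extraction and makes the diagonalization unnecessary. The rest of the argument is then a verbatim repeat of the KB-space bookkeeping from \Cref{BX-complete} (monotonicity of $(x_C)$, norm-boundedness, norm-convergence, identification of limits via $P_B$, and \Cref{band-decomp}), together with un-closedness of $A$ to place the limit in $A$. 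I would also remark that, as in \Cref{BX-complete}, one could phrase everything sequentially after first reducing to the weak-unit case where un-topology is metrizable, but the universal-subnet route is cleaner for the general dense band decomposition.
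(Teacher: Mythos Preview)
Your approach is correct and differs from the paper's. The paper argues via Tikhonov's theorem: it forms $H=\prod_{B\in\mathcal B}B$ equipped with the product of the un-topologies, observes that the map $\Phi\colon x\mapsto(P_Bx)_B$ is a homeomorphism from $X$ (with un-topology) onto its range in $H$ (this is exactly \Cref{band-decomp}), that $K:=\prod_BP_B(A)$ is compact by Tikhonov, and that $\Phi(A)\subseteq K$ is closed in $H$ (this last step is where \Cref{BX-complete} enters, used as a black box). Your universal-subnet extraction is the same mechanism in different dress --- Tikhonov and universal subnets are both avatars of the ultrafilter lemma --- but the paper's version separates the two ingredients (product compactness versus closedness of the image) more cleanly, whereas you re-run the internals of \Cref{BX-complete} inline. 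One wrinkle to watch: your assertion that $(x_C)_{C\in\mathcal C}$ is increasing needs the reduction to $x_\alpha\ge 0$ carried out in \Cref{BX-complete}; additivity over finite sums alone does not give monotonicity. The reduction survives your setup because each $P_B$ is a lattice homomorphism, so $P_Bx_{\alpha_\lambda}^{\pm}=(P_Bx_{\alpha_\lambda})^{\pm}$ still un-converges. A tidier route: once every $P_Bx_{\alpha_\lambda}$ converges, the estimate in the proof of \Cref{band-decomp} shows $(x_{\alpha_\lambda})$ is un-Cauchy, so you may invoke \Cref{BX-complete} directly and skip the monotone-net bookkeeping altogether.
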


\begin{proof}
  If $A$ is un-compact then $P_B(A)$ is un-compact in $B$ for every
  $B\in\mathcal B$ because $P_B$ is un-continuous by \Cref{pb}. To
  prove the converse, suppose that $P_B(A)$ is un-compact in $B$ for
  every $B\in\mathcal B$. Let $H=\prod_{B\in\mathcal B}B$, the formal
  product of all the bands in $\mathcal B$. That is, $H$ consists of
  families $(x_B)_{B\in\mathcal B}$ indexed by $\mathcal B$, where
  $x_B\in B$. We equip $H$ with the topology of coordinate-wise
  un-convergence; this is the product of un-topologies on the bands
  that make up $H$. This makes $H$ a topological vector space. Define
  $\Phi\colon X\to H$ via $\Phi(x)=(P_Bx)_{B\in\mathcal B}$. Clearly,
  $\Phi$ is linear. Since $\mathcal B$ is a dense band decomposition,
  $\Phi$ is one-to-one. By \Cref{band-decomp}, $\Phi$ is a
  homeomorphism from $X$ equipped with un-topology onto its range in
  $H$.
 
  Let $K$ be the subset of $H$ defined by $K=\prod_{B\in\mathcal
    B}P_B(A)$. By Tikhonov's Theorem, $K$ is compact in $H$. It is
  easy to see that $\Phi(A)\subseteq K$.

  We claim that $\Phi(A)$ is closed in $H$. Indeed, suppose that
  $\Phi(x_\alpha)\to h$ in $H$ for some net $(x_\alpha)$ in $A$. In
  particular, the net $\bigl(\Phi(x_\alpha)\bigr)$ is Cauchy in
  $H$. Since $\Phi$ is a homeomorphism, the net $(x_\alpha)$ is
  un-Cauchy in $A$. Since $(x_\alpha)$ is bounded and $X$ is a
  KB-space, $(x_\alpha)$ un-converges to some $x\in X$ by
  \Cref{BX-complete}. Since $A$ is un-closed, we have $x\in A$. It
  follows that $h=\Phi(x)$, so that $h\in\Phi(A)$. 

  Being a closed subset of a compact set, $\Phi(A)$ is itself
  compact. Since $\Phi$ is a homeomorphism, we conclude that $A$ is
  un-compact.
\end{proof}

Next, we discuss relationships between the sequential and the general
variants of un-closedness and un-compactness. Recall that for a set
$A$ in a topological space, we write $\overline{A}$ for the
closure of $A$; we write $\overline{A}^\sigma$ for the
\term{sequential closure} of $A$, i.e., $a\in\overline{A}^\sigma$ iff
$a$ is the limit of a sequence in $A$. We say that $A$ is
\term{sequentially closed} if $\overline{A}^\sigma=A$. It is well
known that for a metrizable topology, we always have
$\overline{A}^\sigma=\overline{A}$.

For a set $A$ in a Banach lattice, we write $\overline{A}^{\rm un}$
and $\overline{A}^{\sigma\text{-un}}$ for the un-closure and the
sequential un-closure of $A$, respectively. Obviously, 
$\overline{A}^{\sigma\text{-un}}\subseteq\overline{A}^{\rm
  un}$.

\begin{example}
  In general, $\overline{A}^{\rm
    un}\ne\overline{A}^{\sigma\text{-}\rm{un}}$. Indeed, in the
  notation of \Cref{emelyanov}, let
  $A=\{e_{\omega}\mid\omega\in\Omega\}$. It follows from
  \Cref{emelyanov} that zero is in $\overline{A}^{\rm
  un}$ but not in $\overline{A}^{\sigma\text{-un}}$.
\end{example}

\begin{proposition}\label{closures}
  Let $A$ be a subset of a Banach lattice $X$. If $X$ has a
  quasi-interior point \emph{or} $X$ is order continuous then 
  $\overline{A}^{\rm un}=\overline{A}^{\sigma\text{-}\rm{un}}$.
\end{proposition}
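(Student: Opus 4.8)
The plan is to treat the two hypotheses separately; in both cases it is enough to prove the inclusion $\overline{A}^{\rm un}\subseteq\overline{A}^{\sigma\text{-un}}$, since the reverse inclusion holds for any subset of any topological space.

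If $X$ has a quasi-interior point, then by \Cref{metriz} the un-topology is metrizable, and for a metrizable topology the sequential closure of a set coincides with its closure (this was recalled just before \Cref{closures}). So this case is immediate.

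Now suppose $X$ is order continuous, and fix $x\in\overline{A}^{\rm un}$. Replacing $A$ by $A-x$, I may assume $x=0$. Since the un-topology is a genuine topology, $0\in\overline{A}^{\rm un}$ means there is a net $(a_\alpha)$ in $A$ with $a_\alpha\goesun 0$. I then apply \Cref{KP} to this net: there exist an increasing sequence of indices $(\alpha_k)$ and a disjoint sequence $(d_k)$ in $X$ with $a_{\alpha_k}-d_k\goesnorm 0$. Because $X$ is order continuous, \Cref{disj} gives $d_k\goesun 0$, while $a_{\alpha_k}-d_k\goesnorm 0$ gives $a_{\alpha_k}-d_k\goesun 0$. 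Since un-convergence respects addition, $a_{\alpha_k}=d_k+(a_{\alpha_k}-d_k)\goesun 0$. Thus $(a_{\alpha_k})$ is a sequence in $A$ which un-converges to $0$, so $0\in\overline{A}^{\sigma\text{-un}}$; undoing the translation yields $x\in\overline{A}^{\sigma\text{-un}}$, which finishes the second case.

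There is no genuine obstacle here; the only point deserving attention is that in the order continuous case one should not attempt to extract a sequence directly from a (possibly uncountable) neighbourhood base at $0$ — \Cref{emelyanov} shows this can fail outright — but instead pass through \Cref{KP}, using that disjoint sequences are un-null in an order continuous Banach lattice.
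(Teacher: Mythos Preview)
Your proof is correct and follows essentially the same approach as the paper: the quasi-interior case is dispatched via \Cref{metriz}, and in the order continuous case you pass to a sequence through \Cref{KP} and use that disjoint sequences are un-null. The only cosmetic difference is that the paper invokes uo-nullity of disjoint sequences together with order continuity, whereas you cite \Cref{disj} directly, and the paper simply assumes $x=0$ rather than translating explicitly.
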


\begin{proof}
  If $X$ has a quasi-interior point then its un-topology is metrizable
  by \Cref{metriz}, hence $\overline{A}^{\rm
    un}=\overline{A}^{\sigma\text{-}\rm{un}}$.

  Suppose that $X$ is order continuous. Suppose that
  $x\in\overline{A}^{\rm un}$; we need to show that
  $x\in\overline{A}^{\sigma\text{-un}}$. Without loss of generality,
  $x=0$. This means that $A$ contains a un-null net $(x_\alpha)$. By
  \Cref{KP}, there exists an increasing sequence of indices
  $(\alpha_k)$ and a disjoint sequence $(d_k)$ such that
  $x_{\alpha_k}-d_k\goesnorm 0$. It follows that
  $x_{\alpha_k}-d_k\goesun 0$. Since $(d_k)$ is disjoint, it is
  uo-null and, since $X$ is order continuous, un-null. It follows that
  $x_{\alpha_k}\goesun 0$ and, therefore,
  $0\in\overline{A}^{\sigma\text{-un}}$.
\end{proof}

Recall that a topological space is said to be \term{sequentially
  compact} if every sequence has a convergent subsequence. In a
Hausdorff topological vector space which is metrizable (or,
equivalently, first countable), sequential compactness is equivalent
to compactness, see, e.g., \cite[Theorem~7.21]{Royden:88}. We do not
know whether un-compactness and sequential un-compactness are
equivalent in general, yet we have the following partial result.

\begin{proposition}\label{scomp}
  Let $A$ be a subset of a Banach lattice $X$.
  \begin{enumerate}
  \item\label{scomp-qip} If $X$ has a quasi-interior point, then $A$
    is sequentially un-compact iff $A$ is un-compact.
  \item\label{scomp-oc} Suppose that $X$ is order continuous. If $A$ is
    un-compact then $A$ is sequentially un-compact.
  \item\label{scomp-KB} Suppose that $X$ is a KB-space. If $A$ is
    norm bounded and sequentially un-compact then $A$ is un-compact.
  \end{enumerate}
\end{proposition}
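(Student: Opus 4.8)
The plan is to prove the three parts in order, using part~(i) to prove part~(iii). Part~(i) is immediate: if $X$ has a quasi-interior point then the un-topology is metrizable by \Cref{metriz}, so the subspace topology on $A$ is metrizable, and for a metrizable space sequential compactness and compactness coincide, as recalled just above (see \cite[Theorem~7.21]{Royden:88}).

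For part~(ii), let $(x_n)$ be a sequence in $A$. The un-topology is Hausdorff (un-limits are unique), so the un-compact set $A$ is un-closed, and the tails $F_k=\overline{\{x_j\mid j\ge k\}}^{\rm un}$ form a decreasing chain of nonempty closed subsets of the compact set $A$; hence $\bigcap_k F_k\ne\emptyset$, and I fix $x$ in the intersection. Since $X$ is order continuous, \Cref{closures} gives $F_k=\overline{\{x_j\mid j\ge k\}}^{\sigma\text{-un}}$, so for each $k$ there is a sequence drawn from $\{x_j\mid j\ge k\}$ that un-converges to $x$. It remains to produce a genuine \emph{subsequence} of $(x_n)$ un-converging to $x$. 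If for some $k$ such a sequence takes infinitely many distinct values, then those values (listed in order of first appearance) form an un-convergent subsequence of it, and relisting them according to their --- necessarily distinct --- original indices, arranged increasingly, gives a genuine subsequence of $(x_n)$ which is a rearrangement of an un-convergent sequence, hence still un-converges to $x$. Otherwise, choosing for each $k$ an approximating sequence as above, each one takes only finitely many distinct values, so it has a value repeated infinitely often, which by uniqueness of un-limits equals $x$; thus $x=x_j$ for some $j$, and iterating this on the tails $F_{j+1},F_{j'+1},\dots$ produces a strictly increasing sequence of indices along which $x_n=x$, i.e.\ a constant (hence un-convergent) subsequence. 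I expect this extraction --- the only place where the possible failure of first countability of the un-topology matters --- to be the main technical point.

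For part~(iii), recall that a KB-space is order continuous, so \Cref{closures} applies to $X$: since $A$ is sequentially un-compact it is sequentially un-closed, hence un-closed, and it is norm bounded by hypothesis. By \Cref{wunit-decomp} fix a dense band decomposition $\mathcal B$ of $X$ in which every band has a weak unit; each $B\in\mathcal B$, being a band in the order continuous space $X$, is itself order continuous, and an order continuous Banach lattice with a weak unit has a quasi-interior point (immediate from the definitions of weak unit and quasi-interior point together with order continuity). Now fix $B\in\mathcal B$ and a sequence $(y_m)$ in $P_B(A)$, say $y_m=P_Bx_m$ with $x_m\in A$. Since $A$ is sequentially un-compact, some subsequence satisfies $x_{m_i}\goesun x$ with $x\in A$, and then $y_{m_i}=P_Bx_{m_i}\goesun P_Bx\in P_B(A)$ in $B$ by \Cref{pb}; hence $P_B(A)$ is sequentially un-compact in $B$, and since $B$ has a quasi-interior point, part~(i) shows that $P_B(A)$ is un-compact in $B$. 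As this holds for every $B\in\mathcal B$ while $A$ is un-closed and norm bounded, \Cref{comp-local} yields that $A$ is un-compact.
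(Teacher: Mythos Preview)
Your proofs of parts~(i) and~(iii) are correct and essentially identical to the paper's; in~(iii) you spell out a few details (e.g.\ why $P_B(A)$ is sequentially un-compact, why $B$ has a quasi-interior point) that the paper leaves implicit, but the structure is the same: un-closedness via \Cref{closures}, dense band decomposition from \Cref{wunit-decomp}, metrizability in each band, then \Cref{comp-local}.

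For part~(ii) your argument is correct but takes a genuinely different route. The paper avoids the subsequence-extraction problem entirely by localizing: given $(x_n)$ in $A$, it builds $e=\sum 2^{-n}x_n/(\norm{x_n}+1)$ so that the sequence lies in the projection band $B_e$, observes that $A\cap B_e$ is un-compact in $B_e$ (since $B_e$ is un-closed by \Cref{band-closed}), and then uses that $e$ is a quasi-interior point of $B_e$ to make the un-topology on $B_e$ metrizable; a convergent subsequence in $B_e$ is then a convergent subsequence in $X$ by \Cref{sublat}\eqref{sublat-projb}. Your approach instead stays in $X$: you find a un-cluster point $x$ via the finite intersection property, invoke \Cref{closures} to replace the un-closures of the tails by sequential un-closures, and then run a careful case analysis to promote ``$x$ is a sequential limit from each tail'' to ``$x$ is the limit of a genuine subsequence.'' This works, but the extraction step (distinct values, rearrangement of indices, the constant-subsequence case) is exactly the technicality that the paper's localization trick sidesteps. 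The paper's method also has the advantage of reusing the same idea in \Cref{rcomp}; your method, on the other hand, is purely topological once \Cref{closures} is in hand and does not require knowing that bands are un-closed or that projection bands transfer un-convergence.
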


\begin{proof}
  \eqref{scomp-qip} follows immediately from \Cref{metriz}.

  \eqref{scomp-oc} Let $(x_n)$ be a sequence in $A$. Find $e\in X_+$
  such that $(x_n)$ is contained in $B_e$ (e.g., take
  $e=\sum_{n=1}^\infty\frac{x_n}{2^n\norm{x_n}+1}$). Since $B_e$ is
  un-closed, the set $A\cap B_e$ is un-compact in $B_e$. Since $e$ is
  a quasi-interior point for $B_e$, the un-topology on $B_e$ is
  metrizable, hence $A\cap B_e$ is sequentially un-compact. It follows
  that there is a subsequence $(x_{n_k})$ which un-converges in
  $B_e$ to some $x\in A\cap B_e$. By
  \Cref{sublat}\eqref{sublat-projb}, $x_{n_k}\goesun x$ in $X$.

  \eqref{scomp-KB} Clearly, $A$ is sequentially un-closed and,
  therefore, un-closed by \Cref{closures}.  Let $\mathcal B$ be as in
  \Cref{wunit-decomp}. For each $B\in\mathcal B$, the band projection
  $P_B$ is un-continuous by \Cref{pb}, so that $P_B(A)$ is
  sequentially un-compact in $B$. Since $B$ has a weak unit, the
  un-topology on $B$ is metrizable, so that $P_B(A)$ is un-compact in
  $B$. The conclusion now follows from \Cref{comp-local}.
\end{proof}

\begin{theorem}\label{BX-compact}
  For a Banach lattice $X$, TFAE:
  \begin{enumerate}
  \item\label{BX-compact-un} $B_X$ is un-compact;
  \item\label{BX-compact-seq} $B_X$ is sequentially un-compact;
  \item\label{BX-compact-KB} $X$ is an atomic KB-space.
  \end{enumerate}
\end{theorem}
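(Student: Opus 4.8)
The plan is to prove the cycle \eqref{BX-compact-un}$\Rightarrow$\eqref{BX-compact-seq}$\Rightarrow$\eqref{BX-compact-KB}$\Rightarrow$\eqref{BX-compact-un}, building on the previous sections. The implication \eqref{BX-compact-KB}$\Rightarrow$\eqref{BX-compact-un} is the quickest. If $X$ is an atomic KB-space then it is order continuous, so the family $\mathcal B=\{I_a\mid a\text{ an atom}\}$ is a dense band decomposition of $X$ (see \Cref{atoms} and the discussion following it), and each $I_a$ is one-dimensional. Since $B_X$ is un-closed and norm bounded and $X$ is a KB-space, \Cref{comp-local} reduces the un-compactness of $B_X$ to the un-compactness of $P_{I_a}(B_X)$ in $I_a$ for each atom $a$. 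But $0\le P_{I_a}\le I$ forces $P_{I_a}(B_X)$ into the closed unit ball of $I_a$, while $P_{I_a}y=y$ for every $y\in I_a$; hence $P_{I_a}(B_X)$ is exactly the closed unit ball of the finite-dimensional space $I_a$, which is un-compact. Thus $B_X$ is un-compact.

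For \eqref{BX-compact-un}$\Rightarrow$\eqref{BX-compact-seq} I would first show that $X$ is order continuous, after which \Cref{scomp}\eqref{scomp-oc}, applied to $A=B_X$, gives the conclusion. If $X$ is not order continuous then, by \cite[Theorem~4.14]{Aliprantis:06}, there is an order bounded disjoint sequence that is not norm null; passing to absolute values and rescaling, we get a disjoint sequence $(x_n)$ in $X_+$ and $u\in B_X$ with $x_n\le u$ and $\norm{x_n}\ge\delta>0$ for all $n$, so in particular $(x_n)\subseteq B_X$. Regarding $(x_n)$ as a net in the un-compact set $B_X$, there is a subnet $x_{n_\alpha}\goesun x$ with $x\in B_X$. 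The subnet is order bounded by $u$, so un-convergence agrees with norm convergence and $x_{n_\alpha}\goesnorm x$; in particular $(x_{n_\alpha})$ is norm Cauchy. But the cofinality property of a subnet lets us choose indices $\alpha_1,\alpha_2$ arbitrarily far out with $n_{\alpha_1}\ne n_{\alpha_2}$, and disjointness then gives $\norm{x_{n_{\alpha_1}}-x_{n_{\alpha_2}}}=\norm{x_{n_{\alpha_1}}+x_{n_{\alpha_2}}}\ge\delta$, contradicting norm Cauchyness. Hence $X$ is order continuous.

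For \eqref{BX-compact-seq}$\Rightarrow$\eqref{BX-compact-KB} I would argue in three steps. \textbf{(a)} \emph{$X$ is order continuous}: with $(x_n)$ as above, no subsequence can un-converge, since $x_{n_k}\goesun x$ would (order boundedness again) give $x_{n_k}\goesnorm x$ and hence $\norm{x_{n_k}-x_{n_{k+1}}}=\norm{x_{n_k}+x_{n_{k+1}}}\ge\delta\not\to0$; so $B_X$ is not sequentially un-compact. \textbf{(b)} \emph{$X$ is a KB-space}: otherwise $X$ contains a positive disjoint normalized sequence $(x_n)$ generating a lattice copy of $c_0$, and its partial sums $s_n=\sum_{i=1}^n x_i$ are norm bounded but, by \Cref{disj-sum}, un-Cauchy and not un-convergent; rescaling them into $B_X$ contradicts the fact that a sequentially un-compact set is sequentially un-complete (an un-Cauchy sequence with an un-convergent subsequence un-converges). \textbf{(c)} \emph{$X$ is atomic}: if not, then, exactly as in the proof of \Cref{loc-conv}, there is $e\in X_+$ with $B_e$ non-atomic. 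Since $X$ is order continuous, $B_e$ is a projection band, $P_{B_e}$ is un-continuous by \Cref{pb}, and $P_{B_e}(B_X)$ equals the closed unit ball of $B_e$ (one inclusion from $0\le P_{B_e}\le I$, the other since $P_{B_e}$ fixes $B_e$); hence this ball is sequentially un-compact in $B_e$. But $B_e$ is a non-atomic order continuous Banach lattice with weak unit $e$, so it has an AL-representation on a non-atomic probability space $L_1(\mu)$ with $L_\infty(\mu)$ a continuously included dense ideal, and on $B_e$ un-convergence coincides with convergence in measure. Using non-atomicity of $\mu$, choose independent measurable sets $(A_k)$ with $\mu(A_k)=\tfrac12$ (obtained by repeated halving) and put $r_k=2\chi_{A_k}-\one$; then $(r_k)$ is norm bounded in $B_e$, while by the Borel--Cantelli lemma a.e. point lies in $A_{k_j}$ and in its complement for infinitely many $j$ along any subsequence, so no subsequence of $(r_k)$ converges $\mu$-a.e., hence none converges in measure. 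Rescaling $(r_k)$ into the unit ball of $B_e$ contradicts its sequential compactness in measure. Therefore $X$ is an atomic KB-space.

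The step I expect to be the main obstacle is \textbf{(c)}: ruling out non-atomicity requires passing to the band $B_e$, transporting un-convergence to convergence in measure through the $L_1(\mu)$-representation, and exhibiting an explicit (Rademacher-type) sequence with no in-measure convergent subsequence; one also has to check that $P_{B_e}(B_X)$ is exactly the unit ball of $B_e$ and that the un-topology of $B_e$ is the one induced from $X$ (via \Cref{sublat}\eqref{sublat-projb}). The remaining implications are comparatively routine consequences of \Cref{comp-local}, \Cref{scomp}, \Cref{disj-sum}, and the basic properties of un-convergence recalled in the introduction.
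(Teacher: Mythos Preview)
Your cycle is correct, but your route to atomicity and order continuity is considerably more laborious than the paper's. The paper makes a single observation that dispatches both at once: under either \eqref{BX-compact-un} or \eqref{BX-compact-seq}, every order interval $[a,b]$ is bounded and un-closed, hence (sequentially) un-compact; but on order intervals un-topology agrees with norm topology, so $[a,b]$ is norm compact, and norm compactness of all order intervals characterizes atomic order continuous Banach lattices (e.g.\ \cite[Theorem~6.1]{Wnuk:99}). This replaces your explicit disjoint-sequence arguments for order continuity in \eqref{BX-compact-un}$\Rightarrow$\eqref{BX-compact-seq} and in step~(a), and it replaces the entire Rademacher construction in step~(c). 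Your argument in (c) is valid---the repeated-halving construction of independent sets of measure $\tfrac12$ works in any non-atomic probability space, and the second Borel--Cantelli lemma then blocks a.e.\ convergence along any subsequence---but it is a detour compared with the one-line reduction to norm-compact intervals.

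For the KB property in (b), the paper also takes a shorter path: given any increasing norm bounded sequence in $X_+$, sequential un-compactness yields a un-convergent subsequence, and \Cref{monot}\eqref{monot-un} upgrades this to norm convergence of the subsequence and hence of the whole sequence. Your $c_0$/\Cref{disj-sum} argument is correct but less direct. The implications \eqref{BX-compact-KB}$\Rightarrow$\eqref{BX-compact-un} (via \Cref{comp-local}) and \eqref{BX-compact-un}$\Rightarrow$\eqref{BX-compact-seq} (via \Cref{scomp}\eqref{scomp-oc} once order continuity is known) match the paper. In short: your proof works, but the paper's ``order intervals are norm compact'' observation is the efficient idea you are missing.
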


\begin{proof}
  First, observe that both \eqref{BX-compact-un} and
  \eqref{BX-compact-seq} imply that $X$ is order continuous and
  atomic. Indeed, since order intervals are bounded and un-closed,
  they are (sequentially) un-compact. But on order intervals, the
  un-topology agrees with the norm topology, hence order intervals are
  norm compact. This implies that $X$ is atomic and order
  continuous; see, e.g., \cite[Theorem~6.1]{Wnuk:99}.

  Suppose \eqref{BX-compact-un}. Since $X$ is order continuous,
  \Cref{scomp}\eqref{scomp-oc} yields~\eqref{BX-compact-seq}.

  Suppose \eqref{BX-compact-seq}. We already know that $X$ is
  atomic. To show that $X$ is a KB-space, let $(x_n)$ be an increasing
  norm bounded sequence in $X_+$. By assumption, it has a
  un-convergent subsequence $(x_{n_k})$. By
  \Cref{monot}\eqref{monot-un},  $(x_{n_k})$ converges in norm, hence
  $(x_n)$ converges in norm. This yields~\eqref{BX-compact-KB}.

  Suppose \eqref{BX-compact-KB}. Let $A$ be a maximal disjoint family
  of atoms in $X$. Then $\bigl\{B_a\mid a\in A\bigr\}$ is a dense band
  decomposition of $X$. For every $a\in A$, $P_a(B_X)$ is a closed
  bounded subset of the one-dimensional band $B_a$, hence $P_a(B_X)$
  is norm and un-compact in $B_a$. \Cref{comp-local} now implies that
  $B_X$ is un-compact, which yields~\eqref{BX-compact-un}.
\end{proof}

\begin{example}
  Let $X=c_0$ and $x_n=e_1+\dots+e_n$. Then $(x_n)$ is a sequence in
  $B_X$ with no un-convergent subsequences.
\end{example}

\begin{proposition}\label{rcomp}
  Let $A$ be a subset of an order continuous Banach lattice $X$. If
  $A$ is relatively un-compact then $A$ is relatively sequentially
  un-compact.
\end{proposition}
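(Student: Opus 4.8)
The plan is to reduce to the metrizable situation via a principal band, exactly as in the proof of \Cref{scomp}\eqref{scomp-oc}, but applied to the un-closure of $A$ rather than to $A$ itself. Let $(x_n)$ be an arbitrary sequence in $A$; the goal is to extract a subsequence that un-converges to a point of $\overline{A}^{\rm un}$. First I would pick $e\in X_+$ with $\abs{x_n}\le\lambda_ne$ for suitable scalars $\lambda_n>0$ --- for instance $e=\sum_{n=1}^\infty\frac{\abs{x_n}}{2^n(\norm{x_n}+1)}$, which converges absolutely in norm --- so that every $x_n$ lies in the principal ideal $I_e$, hence in the principal band $B_e$.

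Next I would record the relevant facts about $B_e$. Since $X$ is order continuous, $B_e$ is a projection band; by \Cref{band-closed} it is un-closed in $X$, and by \Cref{sublat}\eqref{sublat-projb} together with \Cref{pb} a net lying in $B_e$ un-converges in $B_e$ if and only if it un-converges in $X$. Hence the un-topology that $B_e$ carries as a Banach lattice coincides with the relative topology induced on $B_e$ by the un-topology of $X$. Moreover $e$ is a quasi-interior point of $B_e$, so \Cref{metriz} shows this un-topology on $B_e$ is metrizable. Now set $K=\overline{A}^{\rm un}$, which is un-compact by hypothesis. Since $B_e$ is un-closed, $K\cap B_e$ is an un-closed subset of the un-compact set $K$, hence un-compact, and therefore un-compact as a subset of the metrizable space $B_e$. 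A compact subset of a metrizable space is sequentially compact, so $K\cap B_e$ is sequentially un-compact. As $(x_n)$ lies in $A\cap B_e\subseteq K\cap B_e$, there is a subsequence $(x_{n_k})$ that un-converges in $B_e$ to some $x\in K\cap B_e$; by \Cref{sublat}\eqref{sublat-projb} we get $x_{n_k}\goesun x$ in $X$, with $x\in K=\overline{A}^{\rm un}$. This proves that $A$ is relatively sequentially un-compact.

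The only delicate point is the identification of the intrinsic un-topology on the band $B_e$ with the relative un-topology it inherits from $X$; this is precisely where order continuity is used, since it makes $B_e$ a projection band and lets one invoke \Cref{sublat}\eqref{sublat-projb} (and \Cref{pb}) in both directions. Everything else is a routine assembly of \Cref{metriz}, \Cref{band-closed}, and the elementary fact that compact metrizable spaces are sequentially compact.
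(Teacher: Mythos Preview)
Your proof is correct and follows essentially the same approach as the paper: pick $e\in X_+$ so that the given sequence lies in the principal band $B_e$, intersect the un-closure $\overline{A}^{\rm un}$ with $B_e$ to get a un-compact subset of the metrizable space $B_e$, and extract a convergent subsequence there. Your write-up is more explicit than the paper's (you spell out why the intrinsic and relative un-topologies on $B_e$ agree, invoking \Cref{band-closed}, \Cref{sublat}\eqref{sublat-projb}, and \Cref{pb}), but the structure of the argument is identical.
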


\begin{proof}
  Let $(x_n)$ be a sequence in $A$. Find $e\in X_+$ such that $(x_n)$
  is contained in $B_e$. Since $\overline{A}^{\rm un}$ is un-compact,
  the set $\overline{A}^{\rm un}\cap B_e$ is un-compact in $B_e$ and,
  therefore, sequentially un-compact in $B_e$ because the un-topology
  on $B_e$ is metrizable. Hence, there is a subsequence $(x_{n_k})$
  which un-converges in $B_e$ and, therefore, in $X$.
\end{proof}

\section{Un-convergence and weak*-convergence}

\subsection*{When does un-convergence imply weak*-convergence?}
It is easy to see that, in general, un-convergence does not imply
weak*-convergence. Indeed, let $X$ be an infinite-dimensional Banach
lattice with order continuous dual. Pick any unbounded disjoint
sequence $(f_n)$ in $X^*$. Being unbounded, $(f_n)$ cannot be
weak*-null. Yet it is un-null by \Cref{disj}. However, if we restrict
ourselves to norm bounded nets, the situation is more interesting. The
following result is analogous to \cite[Theorem~2.1]{Gao:14}.  Recall
that for a net $(f_\alpha)$ in $X^*$, we write
$f_{\alpha}\xrightarrow{\abs{\sigma}(X^*,X)}0$ if
$\abs{f_\alpha}(x)\to 0$ for every $x\in X_+$.

\begin{theorem}\label{un-ws}
  Let $X$ be a Banach lattice such that $X^*$ is order continuous. The
  following are equivalent:
  \begin{enumerate}
  \item\label{un-ws-oc} $X$ is order continuous;
  \item\label{un-ws-net} for any norm bounded net $(f_{\alpha})$ in $X^*$,
    if $f_{\alpha}\goesun 0$, then $f_{\alpha}\goesws 0$;
  \item\label{un-ws-net-abs} for any norm bounded net $(f_{\alpha})$ in $X^*$,
    if $f_{\alpha}\goesun 0$, then
    $f_{\alpha}\xrightarrow{\abs{\sigma}(X^*,X)}0$;
  \item\label{un-ws-seq} for any norm bounded sequence $(f_n)$ in $X^*$,
    if $f_n\goesun 0$, then $f_n\goesws 0$;
  \item\label{un-ws-seq-abs} for any norm bounded sequence $(f_n)$ in $X^*$,
    if $f_n\goesun 0$, then
    $f_n\xrightarrow{\abs{\sigma}(X^*,X)}0$.
  \end{enumerate}
\end{theorem}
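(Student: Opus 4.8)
The plan is to prove the two substantial implications $(i)\Rightarrow(iii)$ and $(iv)\Rightarrow(i)$; everything else is formal. Indeed $\abs{f_\alpha(x)}\le\abs{f_\alpha}(\abs{x})$ yields $(iii)\Rightarrow(ii)$ and $(v)\Rightarrow(iv)$, restricting from nets to sequences yields $(ii)\Rightarrow(iv)$ and $(iii)\Rightarrow(v)$, and combined with $(i)\Rightarrow(iii)$ and $(iv)\Rightarrow(i)$ this closes the circle $(i)\Rightarrow(iii)\Rightarrow(ii)\Rightarrow(iv)\Rightarrow(i)$, with $(v)$ caught between $(iii)$ and $(iv)$.

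For $(i)\Rightarrow(iii)$: replacing $f_\alpha$ by $\abs{f_\alpha}$, it suffices to show that a norm bounded positive un-null net $(g_\alpha)$ in $X^*$ has $g_\alpha(x)\to 0$ for every $x\in X_+$; put $M=\sup_\alpha\norm{g_\alpha}$. First I would reduce to the case that $X$ has a weak unit. By \Cref{wunit-decomp} fix a dense band decomposition $\mathcal B$ in which each band has a weak unit and let $\mathcal C$ be the directed family of finite joins from $\mathcal B$. Since $X$ is order continuous, $P_Cx\uparrow x$ along $\mathcal C$, so $\norm{x-P_{C_0}x}$ can be made as small as we wish for a suitable $C_0=B_1\oplus\dots\oplus B_n$; then $g_\alpha(x)\le \sum_{i=1}^n (g_\alpha|_{B_i})(P_{B_i}x)+M\norm{x-P_{C_0}x}$, and by \Cref{pb} (applied in $X^*$) each $(g_\alpha|_{B_i})$ is a norm bounded positive un-null net in the order continuous Banach lattice $(B_i)^*$. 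So assume $X$ is order continuous with weak unit $e$ (and order continuous dual), and fix an AL-representation $X\subseteq L_1(\mu)$ with $e=\one$. Since $X$ is order continuous, $X^*$ is, via its Köthe dual, realised as an ideal of $L_1(\mu)$ containing $\one$, with the duality $\langle g,x\rangle=\int gx\,d\mu$. By \cite[Theorem~4.6]{DOT} in its net form, $g_\alpha\goesmu 0$; moreover $(g_\alpha x)_\alpha$ is uniformly integrable, since $\int_A g_\alpha x\,d\mu=\langle g_\alpha,x\chi_A\rangle\le M\norm{x\chi_A}_X$ and $X$ has order continuous norm; Vitali's convergence theorem then gives $g_\alpha(x)=\int g_\alpha x\,d\mu\to 0$.

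For $(iv)\Rightarrow(i)$ I argue by contraposition: assume $X$ is not order continuous. By \cite[Theorem~4.14]{Aliprantis:06} there is a disjoint sequence $(x_n)$ in $X_+$ with $x_n\le u$ for some $u\in X_+$ and $\delta:=\inf_n\norm{x_n}>0$; pick $f_n\in X^*_+$ with $\norm{f_n}\le 1$ and $f_n(x_n)=\norm{x_n}$. The key step is to localise each $f_n$ at $x_n$. Regarding $x_n$ as an order continuous functional on $X^*$ (via $X\subseteq X^{**}$), its null ideal $N_n=\{f\in X^*:\abs{f}(x_n)=0\}$ is a band, hence a projection band since $X^*$ is Dedekind complete; let $Q_n$ project onto the carrier $N_n^d$ and set $d_n=Q_nf_n$. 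Then $0\le d_n\le f_n$, so $\norm{d_n}\le 1$; since $f_n-d_n\in N_n$ we get $d_n(x_n)=f_n(x_n)=\norm{x_n}$; and the $d_n$ are pairwise disjoint because the carriers of the pairwise disjoint positive order continuous functionals $x_n$ on $X^*$ are pairwise disjoint (standard carrier calculus, see e.g.\ \cite{Aliprantis:06}). As $X^*$ is order continuous, the disjoint sequence $(d_n)$ is un-null by \Cref{disj}, and it is norm bounded; yet $d_n(u)\ge d_n(x_n)\ge\delta$, so $(d_n)$ is not weak\nobreakspace*-null, contradicting $(iv)$.

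The step I expect to be the main obstacle is the localisation in $(iv)\Rightarrow(i)$: producing, out of the Hahn--Banach functionals $f_n$, a genuinely disjoint — hence, thanks to order continuity of $X^*$, un-null — bounded sequence in $X^*$ that still separates the $x_n$. This is where Dedekind completeness of $X^*$ together with the carrier theory of order continuous functionals is essential. A secondary technical point, in $(i)\Rightarrow(iii)$, is the reduction to a space with a weak unit and the identification of $X^*$ with the Köthe dual inside the AL-representation; after that the conclusion is a routine uniform-integrability argument.
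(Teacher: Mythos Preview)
Your argument is correct. For $(iv)\Rightarrow(i)$ you do exactly what the paper does: manufacture a norm bounded disjoint sequence in $X^*$ that fails to be weak*-null (your carrier/null-ideal construction is precisely the mechanism behind the corresponding step in \cite[Theorem~2.1]{Gao:14}), and then invoke \Cref{disj} to conclude it is un-null --- this is the one place where the paper explicitly says the argument deviates from \cite{Gao:14}, and you hit it.

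For $(i)\Rightarrow(iii)$ your route is genuinely different from the paper's. The paper simply defers to \cite[Theorem~2.1]{Gao:14}, whose proof is an abstract Banach-lattice argument: for $x\in X_+$ one splits $g_\alpha(x)=(g_\alpha\wedge h)(x)+(g_\alpha-h)^+(x)$ and uses order continuity of $X$ to choose $h\in X^*_+$ making the second term uniformly small, while un-convergence kills the first. You instead pass to an AL-representation, identify $X^*$ with the K\"othe dual, and run a uniform-integrability/Vitali argument. Both work; your approach is more concrete but, as you wrote it, leans on the standing hypothesis that $X^*$ is order continuous (you invoke \cite[Theorem~4.6]{DOT} in $X^*$). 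The paper observes that $(i)\Rightarrow(iii)$ holds even without that hypothesis. In fact your own setup gives this for free: from $g_\alpha\goesun 0$ in $X^*$ you get $\norm{g_\alpha\wedge\one}_{X^*}\to 0$, and evaluating at $\one/\norm{\one}_X\in B_X$ already yields $\norm{g_\alpha\wedge\one}_{L_1}\to 0$, hence $g_\alpha\goesmu 0$ --- no need to cite \cite[Theorem~4.6]{DOT} for $X^*$. With that tweak, your measure-theoretic proof recovers the full strength noted after the theorem.
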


The proof is similar to that of \cite[Theorem~2.1]{Gao:14} except that
in the proof of \eqref{un-ws-seq}$\Rightarrow$\eqref{un-ws-oc} we use
\Cref{disj}. Note that without the assumption that $X^*$ is
order continuous, we still get the following implications:
\begin{displaymath}
  \eqref{un-ws-oc}\Rightarrow
  \bigl[\eqref{un-ws-net}\Leftrightarrow\eqref{un-ws-net-abs}\bigr]
  \Rightarrow
  \bigl[\eqref{un-ws-seq}\Leftrightarrow\eqref{un-ws-seq-abs}\bigr].
\end{displaymath}

\subsection*{When does weak*-convergence imply un-convergence?}
Recall that for norm bounded nets, weak*-convergence
implies uo-convergence in $X^*$ iff $X$ is atomic and order continuous
by \cite[Theorem~3.4]{Gao:14}. Furthermore, \Cref{w-un} immediately
yields the following.

\begin{corollary}\label{ws-un-gen}
  If $f_n\goesws 0$ implies $f_n\goesun 0$ for every sequence in $X^*$
  then $X^*$ is atomic and order continuous.
\end{corollary}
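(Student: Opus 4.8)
The plan is simply to invoke \Cref{w-un} with the Banach lattice $X^*$ in place of $X$, after observing that the hypothesis of the corollary is formally stronger than the hypothesis of condition \eqref{w-un-seq} of that proposition applied to $X^*$.

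First I would recall that on the dual lattice $X^*$ the weak topology is $\sigma(X^*,X^{**})$ while the weak$^*$ topology is $\sigma(X^*,X)$. Since $X\subseteq X^{**}$, the weak$^*$ topology is coarser than the weak topology on $X^*$; consequently $f_n\goesw 0$ implies $f_n\goesws 0$ for every sequence $(f_n)$ in $X^*$. Combining this with the assumed implication $f_n\goesws 0\Rightarrow f_n\goesun 0$, we obtain that $f_n\goesw 0$ implies $f_n\goesun 0$ for every sequence $(f_n)$ in $X^*$. This is precisely statement \eqref{w-un-seq} of \Cref{w-un} for the Banach lattice $X^*$, which is equivalent to \eqref{w-un-aoc}; hence $X^*$ is atomic and order continuous.

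There is essentially no obstacle here: the only point requiring care is keeping the direction of the inclusion between the weak and weak$^*$ topologies straight, so that the stated (weak$^*$) hypothesis genuinely entails the (weak) hypothesis needed to feed into \Cref{w-un}. Everything else is an immediate application of the previously established proposition to $X^*$.
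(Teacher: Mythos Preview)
Your argument is correct and is exactly the route the paper takes: the corollary is stated as an immediate consequence of \Cref{w-un} applied to $X^*$, and you have simply made explicit the one observation needed, namely that weak convergence on $X^*$ implies weak$^*$ convergence so that the hypothesis feeds into condition~\eqref{w-un-seq}.
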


The following example shows that the converse is false in general.

\begin{example}
  Let $X=c$, the space of all convergent sequences. By \cite[Theorem
  16.14]{Aliprantis:06a}, $X^*$ may be identified with
  $\ell_1\oplus\mathbb R$ with the duality given by
  \begin{displaymath}
    \bigl\langle(f,r),x\bigr\rangle=r\cdot\lim_nx_n+\sum_{n=1}^\infty f_nx_n,
  \end{displaymath}
  where $x\in c$, $f\in\ell_1$, and $r\in\mathbb R$. It is easy to see
  that $X^*$ is atomic and order continuous. Consider the sequence
  $\bigl((e_n,0)\bigr)$ in $X^*$, where $e_n$ is the $n$-th standard
  unit vector in $\ell_1$. It is easy to see that
  $(e_n,0)\goesws(0,1)$ in $X^*$. On the other hand, this sequence is
  disjoint and, therefore, un-null. Take $f_n=(e_n,-1)$; it follows
  that $(f_n)$ is weak*-null but not un-null. Note that in this
  example, $X^*$ is order continuous while $X$ is not.
\end{example}

Nevertheless, we will show that the converse implication is true under
the additional assumption that $X$ is order continuous.



\begin{theorem}\label{ws-un}
  The following are equivalent:
  \begin{enumerate}
  \item\label{ws-un-net} For every net $(f_\alpha)$ in $X^*$, if
    $f_\alpha\goesws 0$ then $f_\alpha\goesun 0$;
  \item\label{ws-un-X} $X^*$ is atomic and both $X$ and $X^*$ are
    order continuous.
  \end{enumerate}
\end{theorem}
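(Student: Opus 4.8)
The plan is to prove the two implications separately. For \eqref{ws-un-net}$\Rightarrow$\eqref{ws-un-X}: assuming weak*-null nets are un-null, apply the sequence version together with \Cref{ws-un-gen} to conclude that $X^*$ is atomic and order continuous. It remains to show $X$ is order continuous. Here I would argue by contradiction: if $X$ is not order continuous, then $X^{**}\supseteq X$ properly, and moreover there is a disjoint order bounded sequence in $X$ that is not norm null, or equivalently (by \cite[Theorem~4.14]{Aliprantis:06}) $X$ contains a copy of $\ell_\infty$ or $c_0$ in a suitable way; the cleanest route is: since $X^*$ is order continuous but $X$ is not, $X$ contains a lattice copy of $c_0$ that is not complemented in a way that produces a weak*-null-but-not-un-null net. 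Concretely, I expect one can take a positive disjoint normalized sequence $(x_n)$ in $X$ whose partial sums $s_n=\sum_{i=1}^n x_i$ are order bounded (this uses failure of order continuity). Then in $X^{**}$ the sequence $(s_n)$ has a weak*-cluster point, and by testing against the corresponding functionals one extracts a net in $X$ (or in $X^{**}$) that is weak*-null but for which $\abs{\,\cdot\,}\wedge u$ does not go to zero in norm for a suitable $u$. The delicate point is to produce the net inside $X^*$ itself, not merely in $X^{***}$; I would use that $X$ embeds as a band in $X^{**}$ when... no — rather, since $X^*$ is order continuous, $X^*$ is an ideal (in fact a band) in $X^{***}$ and one works with the canonical projection.

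For the converse \eqref{ws-un-X}$\Rightarrow$\eqref{ws-un-net}: suppose $X^*$ is atomic and $X,X^*$ are both order continuous, and let $f_\alpha\goesws 0$ in $X^*$. By \Cref{un-atomic} applied to the atomic order continuous space $X^*$, it suffices to show $g_a(f_\alpha)\to 0$ for every atom $a$ of $X^*$, where $g_a\in X^{**}$ is the corresponding coordinate functional with $P_a h = g_a(h)a$. The key observation is that since $X$ is order continuous, $X$ is an ideal — in fact one should check it is a projection band — in $X^{**}$, so for each atom $a\in X^*$ the coordinate functional $g_a\in X^{**}$ can be decomposed as $g_a = g_a' + g_a''$ with $g_a'\in X$ and $g_a''$ in the disjoint complement; but actually the cleaner claim is that \emph{every} atom $a$ of $X^*$ has its coordinate functional $g_a$ lying in $X\subseteq X^{**}$. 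This is where order continuity of $X$ enters decisively: an atom of $X^*$ "is" a coordinate functional coming from an atom of $X$ when $X$ is order continuous — or more precisely $g_a$ is an order continuous functional on $X^*$, hence (since $X^*$ is order continuous so that $(X^*)^*_n = X^{**}$... ) I must show $g_a$ is weak*-continuous on $X^*$, i.e.\ $g_a\in X$. Granting $g_a\in X$, weak*-convergence $f_\alpha\goesws 0$ gives $g_a(f_\alpha)\to 0$ immediately, and \Cref{un-atomic} finishes the proof.

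The main obstacle, and the step I would spend the most care on, is showing in the converse direction that each atom of $X^*$ gives a coordinate functional that lies in the canonical image of $X$ in $X^{**}$ — equivalently, that the atoms of $X^*$ are precisely the coordinate functionals $f_a$ associated with atoms $a$ of $X$ (under order continuity of $X$). I would prove this as follows: let $\phi\in X^*_+$ be an atom, so $P_\phi$ is a band projection on $X^*$ of rank one. Since $X^*$ is order continuous, $P_\phi$ is weak*-continuous on $X^*$ if and only if its adjoint maps $X$ into $X$; using that $X$ is order continuous, the band projection onto $B_\phi^d$ in $X^*$ is the adjoint of a band projection on $X$, and one identifies the rank-one complement with $\Span f_a$ for an atom $a$ of $X$. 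An alternative, possibly smoother, line: since $X$ is order continuous, by \Cref{wunit-decomp} decompose $X$ along bands with weak units and reduce to the case $X$ has a weak unit, where $X$ sits as an ideal in an $L_1(\mu)$; atoms of $X^*$ then correspond to atoms of $\mu$, and atoms of $\mu$ are atoms of $X$ too, so the coordinate functional is evaluation at that atom, which lies in $L_\infty(\mu)\subseteq X^*$... one must be careful about which space is which, but the point is that the representation makes the identification transparent. For the forward direction the analogous obstacle is manufacturing the offending net genuinely inside $X^*$, which I would handle by the disjoint-partial-sums construction together with \Cref{disj-sum} and a Hahn–Banach/weak*-cluster-point argument.
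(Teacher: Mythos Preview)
Your plan for \eqref{ws-un-X}$\Rightarrow$\eqref{ws-un-net} is the same as the paper's: reduce via \Cref{un-atomic} to showing that each coordinate functional $\varphi_a$ on $X^*$ (for $a$ an atom of $X^*$) is weak*-continuous. You spend some effort trying to prove this by identifying atoms of $X^*$ with atoms of $X$, or via an AL-representation; the paper dispatches this in one line by citing \cite[Corollary~2.4.7]{Meyer-Nieberg:91}, which says that when $X$ is order continuous every band projection on $X^*$ is weak*-continuous. Since $\varphi_a(f)a=P_af$, the weak*-continuity of $\varphi_a$ follows immediately. So this direction is fine, just more laborious than necessary.

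The forward direction \eqref{ws-un-net}$\Rightarrow$\eqref{ws-un-X} has a genuine gap. You try to derive order continuity of $X$ by taking a disjoint sequence $(x_n)$ in $X$ with order bounded partial sums, then looking at weak*-cluster points of $(s_n)$ in $X^{**}$ and somehow transporting the construction into $X^*$. You yourself flag that ``the delicate point is to produce the net inside $X^*$ itself,'' and indeed nothing in your sketch does so: the Hahn--Banach/cluster-point step produces objects in $X^{**}$ or $X^{***}$, not in $X^*$, and \Cref{disj-sum} is a statement about $X$, not its dual. There is no clear mechanism to get from your partial sums in $X$ to a weak*-null-but-not-un-null net in $X^*$.

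The paper avoids this entirely by working on the correct side of the duality from the start. The key input is \cite[Corollary~2.4.3]{Meyer-Nieberg:91}: $X$ is order continuous iff every norm bounded disjoint sequence in $X^*$ is weak*-null. So if $X$ fails order continuity, one obtains directly a bounded disjoint sequence $(f_n)$ in $X^*$ that is not weak*-null; pass to a subsequence with $\bigabs{f_{n_k}(x_0)}>\varepsilon$ for some fixed $x_0$, then to a weak*-convergent subnet $g_\alpha\goesws g$ via Alaoglu. Since $X^*$ is order continuous (already established from \Cref{ws-un-gen}), the disjoint sequence is un-null, hence $g_\alpha\goesun 0$; applying hypothesis~\eqref{ws-un-net} to $(g_\alpha-g)$ forces $g=0$, contradicting $\bigabs{g_\alpha(x_0)}>\varepsilon$. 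The moral: the characterization of order continuity of $X$ that lives naturally in $X^*$ is the one you want, not the one that lives in $X$.
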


\begin{proof}
  \eqref{ws-un-net}$\Rightarrow$\eqref{ws-un-X} By \Cref{ws-un-gen},
  $X^*$ is atomic and order continuous. Suppose $X$ is
  not order continuous. By \cite[Corollary 2.4.3]{Meyer-Nieberg:91}
  there exists a disjoint norm-bounded sequence $(f_n)$ in $X^*$ which
  is not weak*-null. One can then find a subsequence $(f_{n_k})$, a
  vector $x_0\in X$ and a positive real $\varepsilon$ so that
  $\bigabs{f_{n_k}(x_0)}>\varepsilon$ for every $k$. By
  the Alaoglu-Bourbaki Theorem, there is a subnet $(g_\alpha)$ of
  $(f_{n_k})$ such that $g_\alpha\goesws g$ for some $g\in X^*$. Since
  $(f_{n_k})$ is disjoint and $X^*$ is order continuous, we have
  $f_{n_k}\goesun 0$ and, therefore, $g_\alpha\goesun 0$. By
  assumption, this yields $g=0$, so that $g_\alpha\goesws 0$. This
  contradicts $\abs{g_\alpha(x_0)}>\varepsilon$ for every $\alpha$.

  \eqref{ws-un-X}$\Rightarrow$\eqref{ws-un-net} 
  Let $f_\alpha\goesws 0$ in $X$. Let $A$ be a maximal disjoint
  collection of atoms in $X^*$; for each atom $a\in A$ let $P_a$ and
  $\varphi_a$ be the corresponding band projection and the coordinate
  functional, respectively; $P_a$ and $\varphi_a$ are defined on
  $X^*$. By \cite[Corollary~2.4.7]{Meyer-Nieberg:91}, $P_a$ and,
  therefore, $\varphi_a$, is weak*-continuous. It follows that
  $\varphi_a(f_\alpha)\to 0$ in $\alpha$. \Cref{un-atomic} yields that
  $f_\alpha\goesun 0$.
\end{proof}

\begin{proposition}
  Suppose that $X^*$ is atomic. The following are equivalent.
  \begin{enumerate}
  \item\label{aws-un-net} For every net $(f_\alpha)$ in $X^*$, if
  \begin{math}
    f_\alpha\xrightarrow{\abs{\sigma}(X^*,X)}0
  \end{math}
  then $f_\alpha\goesun 0$;
  \item\label{aws-un-seq} For every sequence $(f_n)$ in $X^*$, if
  \begin{math}
    f_n\xrightarrow{\abs{\sigma}(X^*,X)}0
  \end{math}
  then $f_n\goesun 0$;
  \item\label{aws-un-X} $X^*$ is order continuous.
  \end{enumerate}
\end{proposition}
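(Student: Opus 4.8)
The plan is to prove the implications \eqref{aws-un-net}$\Rightarrow$\eqref{aws-un-seq}$\Rightarrow$\eqref{aws-un-X}$\Rightarrow$\eqref{aws-un-net}. The first is immediate, since every sequence is a net. For \eqref{aws-un-seq}$\Rightarrow$\eqref{aws-un-X} I would argue as in the proof of \Cref{disj}: it suffices to check that every order bounded disjoint sequence in $X^*$ is norm null and then invoke \cite[Theorem~4.14]{Aliprantis:06}. So let $(f_n)$ be disjoint in $X^*$ with $\abs{f_n}\le g$ for some $g\in X^*_+$. Disjointness gives $\sum_{n=1}^N\abs{f_n}=\bigvee_{n=1}^N\abs{f_n}\le g$ for every $N$, hence $\sum_{n=1}^\infty\abs{f_n}(x)\le g(x)<\infty$ and therefore $\abs{f_n}(x)\to 0$ for each $x\in X_+$; that is, $f_n\xrightarrow{\abs{\sigma}(X^*,X)}0$. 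By \eqref{aws-un-seq}, $f_n\goesun 0$, and since $(f_n)$ is order bounded, un-convergence coincides with norm convergence, so $f_n\goesnorm 0$.

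For \eqref{aws-un-X}$\Rightarrow$\eqref{aws-un-net} the idea is to reduce un-convergence in $X^*$ to convergence of the atomic ``coordinates'' via \Cref{un-atomic}, which applies because $X^*$ is atomic and, by \eqref{aws-un-X}, order continuous. Fix an atom $a$ of $X^*$, write its band projection as $P_a=\varphi_a\otimes a$ with $\varphi_a\in X^{**}_+$, and pick $x_0\in X_+$ with $a(x_0)=1$. The key estimate is
\begin{displaymath}
  \bigabs{\varphi_a(f)}\le\abs{f}(x_0)\qquad\text{for every }f\in X^*.
\end{displaymath}
For $f\in X^*_+$ this follows by noting that $(I-P_a)f$ is disjoint from $a$, so $f\wedge na=(P_af)\wedge na=(\varphi_a(f)\wedge n)\,a$, which equals $P_af$ as soon as $n\ge\varphi_a(f)$; evaluating at $x_0$ and using $f\wedge na\le f$ gives $0\le\varphi_a(f)\le f(x_0)$, and the general case follows from $\bigabs{\varphi_a(f)}\le\varphi_a(\abs{f})\le\abs{f}(x_0)$. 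Consequently, if $f_\alpha\xrightarrow{\abs{\sigma}(X^*,X)}0$, then $\varphi_a(f_\alpha)\to 0$ for every atom $a$, and \Cref{un-atomic} yields $f_\alpha\goesun 0$.

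The step I expect to need the most care is \eqref{aws-un-X}$\Rightarrow$\eqref{aws-un-net}: since $X$ is not assumed order continuous, the coordinate functionals $\varphi_a\in X^{**}$ need not be weak*-continuous, so the route used in the proof of \Cref{ws-un} is unavailable; one must instead exploit the stronger $\abs{\sigma}(X^*,X)$-hypothesis through the explicit domination $\bigabs{\varphi_a(f)}\le\abs{f}(x_0)$. The remaining ingredients are routine: the reduction to \Cref{un-atomic} uses only order continuity of $X^*$, which is exactly \eqref{aws-un-X}, and the disjoint-sequence computation proving \eqref{aws-un-seq}$\Rightarrow$\eqref{aws-un-X} is standard.
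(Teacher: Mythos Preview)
Your proof is correct and follows essentially the same strategy as the paper: both arguments prove \eqref{aws-un-seq}$\Rightarrow$\eqref{aws-un-X} via order bounded disjoint sequences and \cite[Theorem~4.14]{Aliprantis:06}, and both prove \eqref{aws-un-X}$\Rightarrow$\eqref{aws-un-net} by showing that the atomic coordinate functionals $\varphi_a$ are $\abs{\sigma}(X^*,X)$-continuous and then invoking \Cref{un-atomic}. The only difference is that the paper obtains this continuity by citing \cite[Proposition~2.4.5]{Meyer-Nieberg:91} (band projections on $X^*$ are $\abs{\sigma}(X^*,X)$-continuous), whereas you supply the explicit domination $\bigabs{\varphi_a(f)}\le\abs{f}(x_0)$ by hand---a self-contained alternative that amounts to the same thing.
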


\begin{proof}
  \eqref{aws-un-net}$\Rightarrow$\eqref{aws-un-seq} is trivial.

  \eqref{aws-un-seq}$\Rightarrow$\eqref{aws-un-X} The proof is similar
  to that of \Cref{w-un}. To show that $X^*$
  is order continuous, suppose that $(f_n)$ is an order bounded
  positive disjoint sequence in $X_+^*$. It follows that 
  \begin{math}
    f_n\xrightarrow{\abs{\sigma}(X^*,X)}0
  \end{math}
  and, by assumption, $f_n\goesun 0$. Since the sequence is order
  bounded, this yields $f_n\goesnorm 0$. Therefore, $X^*$ is order
  continuous.

  \eqref{aws-un-X}$\Rightarrow$\eqref{aws-un-net} By
  \cite[Proposition~2.4.5]{Meyer-Nieberg:91}, band projections on
  $X^*$ are $\abs{\sigma}(X^*,X)$-continuous. The proof is now
  analogous to the implication
  \eqref{ws-un-X}$\Rightarrow$\eqref{ws-un-net} in \Cref{ws-un}.
\end{proof}

\subsection*{Simultaneous weak* and un-convergence}
Section~4 of~\cite{Gao:14} contains several results that assert that
if a sequence or a net in $X^*$ converges in both weak* and
uo-topology then it also converges in some other topology. Several of
these results remain valid if uo-convergence is replaced with
un-convergence. In particular, this works for Proposition~4.1
in~\cite{Gao:14}. Propositions~4.3, 4.4, and~4.6 in~\cite{Gao:14}
remain valid under the additional assumption that $X^*$ is order
continuous (note that the dual positive Schur property already implies
that $X^*$ is order continuous by
\cite[Proposition~2.1]{Wnuk:13}). The proofs are analogous to the
corresponding proofs in~\cite{Gao:14}. Alternatively, the un-versions
of these may be deduced from the uo-versions using the following two
facts: first, every un-convergent sequence has a uo-convergent
subsequence and, second, a sequence $(x_n)$ converges to $x$ in a
topology $\tau$ iff every subsequence $(x_{n_k})$ has a further
subsequence $(x_{n_{k_i}})$ such that
$x_{n_{k_i}}\xrightarrow{\tau}x$.

\section{Un-compact operators}

Throughout this section, let $E$ be a Banach space, $X$ a Banach
lattice, and $T\in L(E,X)$. We say that $T$ is \term{(sequentially)
un-compact} if $TB_E$ is relatively (sequentially) un-compact in
$E$. Equivalently, for every bounded net $(x_\alpha)$ (respectively,
every bounded sequence $(x_n)$) its image has a subnet
(respectively, subsequence), which is un-convergent.

Clearly, if $T$ is compact then it is un-compact and sequentially
un-compact. \Cref{metriz,BX-compact} and \Cref{rcomp} yield the following.

\begin{proposition}\label{un-comp-op}
  Let $T\in L(E,X)$.
  \begin{enumerate}
  \item If $X$ has a quasi-interior point then $T$ is un-compact iff
  it is sequentially un-compact;
  \item If $X$ is order continuous and $T$ is un-compact
  then $T$ is sequentially un-compact;
  \item\label{un-comp-op-atom} If $X$ is an atomic KB-space then $T$ is
  un-compact and sequentially un-compact.
  \end{enumerate}
\end{proposition}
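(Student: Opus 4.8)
\textbf{Proof proposal for \Cref{un-comp-op}.}

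The plan is to obtain each of the three parts directly from the corresponding results about un-compact \emph{sets}, applied to the bounded set $TB_E\subseteq X$; no genuinely new work should be needed. For part (i), I would note that since $X$ has a quasi-interior point, \Cref{metriz} tells us that un-topology on $X$ is metrizable. On a metrizable topological vector space, a subset is relatively compact iff it is relatively sequentially compact (this is the standard fact recalled just before \Cref{scomp}). Applying this to the set $TB_E$ gives that $TB_E$ is relatively un-compact iff it is relatively sequentially un-compact, which is precisely the assertion that $T$ is un-compact iff $T$ is sequentially un-compact.

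For part (ii), suppose $X$ is order continuous and $T$ is un-compact, i.e.\ $TB_E$ is relatively un-compact. Then \Cref{rcomp} applies verbatim to the subset $A=TB_E$ of the order continuous Banach lattice $X$ and yields that $TB_E$ is relatively sequentially un-compact; that is, $T$ is sequentially un-compact. This is an immediate citation of \Cref{rcomp}.

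For part (iii), suppose $X$ is an atomic KB-space. By \Cref{BX-compact}, $B_X$ is un-compact (and sequentially un-compact). The set $TB_E$ is norm bounded, hence contained in $\lambda B_X$ for some $\lambda>0$; since scaling is an un-homeomorphism of $X$, $\lambda B_X$ is un-compact, so its un-closed subset $\overline{TB_E}^{\mathrm{un}}$ is un-compact as well. Therefore $T$ is un-compact. Since an atomic KB-space is order continuous, part (ii) (or \Cref{rcomp} again) upgrades this to sequential un-compactness of $T$. I do not anticipate any real obstacle here: the only mild points to be careful about are that $TB_E$ need not be un-closed, so one passes to its un-closure, and that one must invoke the homeomorphism property of dilations to move from $B_X$ to $\lambda B_X$; both are routine.
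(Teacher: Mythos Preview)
Your proposal is correct and follows exactly the route the paper intends: the paper's entire proof is the single sentence ``\Cref{metriz,BX-compact} and \Cref{rcomp} yield the following,'' and you have simply spelled out how each citation is applied to the set $TB_E$. The only very minor point is that the fact recalled before \Cref{scomp} is stated for compactness rather than relative compactness, but the relative version is an immediate consequence in a metrizable space, so there is no gap.
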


\begin{proposition}
  The set of all un-compact operators is a linear subspace of
  $L(E,X)$. The set of all sequentially un-compact operators in
  $L(E,X)$ is a closed subspace.
\end{proposition}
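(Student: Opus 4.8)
The statement has two parts, and I would treat them separately.

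\textbf{Linearity of the un-compact operators.} The plan is to show that if $S, T \in L(E,X)$ are un-compact and $\lambda,\mu$ are scalars, then $\lambda S + \mu T$ is un-compact. Fix a bounded net $(x_\alpha)$ in $B_E$. Since $S$ is un-compact, there is a subnet along which $Sx_\alpha$ un-converges; passing to a further subnet (using un-compactness of $T$), I get a single subnet along which both $Sx_\alpha$ and $Tx_\alpha$ un-converge. Since un-convergence respects linear operations (stated in the preliminaries), $(\lambda S + \mu T)x_\alpha$ un-converges along that subnet. Hence $(\lambda S + \mu T)B_E$ is relatively un-compact, i.e.\ $\lambda S + \mu T$ is un-compact. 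The only slightly delicate point is the standard ``subnet of a subnet'' bookkeeping for nets, but this is routine. Actually one should be a touch careful: relative un-compactness of $TB_E$ means $\overline{TB_E}^{\mathrm{un}}$ is un-compact, so every net in $TB_E$ has a subnet un-converging to a point of the closure; this is exactly what is used above, and closure of the image under addition is handled by the fact that the limit of the sum is the sum of the limits.

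\textbf{The sequentially un-compact operators form a closed subspace.} Linearity is proved exactly as above, working with sequences and subsequences instead of nets, which is even more routine. For closedness, suppose $(T_k)$ is a sequence of sequentially un-compact operators in $L(E,X)$ with $T_k \to T$ in operator norm; I must show $T$ is sequentially un-compact. Take a bounded sequence $(x_n)$ in $B_E$, say $\norm{x_n} \le 1$. I would run a diagonal argument: since $T_1$ is sequentially un-compact, extract a subsequence of $(x_n)$ along which $T_1 x_n$ un-converges; from that, extract a further subsequence along which $T_2 x_n$ un-converges; and so on. The diagonal subsequence $(x_{n_j})$ has the property that $(T_k x_{n_j})_j$ un-converges for every fixed $k$. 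It remains to show $(T x_{n_j})_j$ un-converges. Here I would argue that $(Tx_{n_j})$ is un-Cauchy in some metrizable localization and then un-convergent. Concretely: fix $\varepsilon > 0$ and $u \in X_+$; choose $k$ with $\norm{T_k - T} < \varepsilon$; then
\[
  \bignorm{\abs{Tx_{n_i} - Tx_{n_j}} \wedge u}
  \le \bignorm{\abs{(T-T_k)(x_{n_i}-x_{n_j})}} + \bignorm{\abs{T_k x_{n_i} - T_k x_{n_j}} \wedge u}
  < 2\varepsilon + \bignorm{\abs{T_k x_{n_i} - T_k x_{n_j}} \wedge u},
\]
and the last term is eventually $< \varepsilon$ since $(T_k x_{n_j})_j$ un-converges, hence is un-Cauchy. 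So $(Tx_{n_j})$ is un-Cauchy.

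\textbf{The main obstacle.} The real issue is that a un-Cauchy sequence need not un-converge in general — un-topology is not complete. So ``un-Cauchy'' alone does not finish the argument, and I must be more careful. The way around this: the diagonal subsequence $(x_{n_j})$ is bounded, so all the vectors $T x_{n_j}$ and all the vectors $T_k x_{n_j}$ lie in $B_e$ for a suitable $e \in X_+$, for instance $e = \sum_j 2^{-j}(\abs{T x_{n_j}} + 1)/(\norm{T x_{n_j}}+1)$ together with similar contributions, or more simply one works inside the principal ideal $I_{\abs{Tx_{n_1}} \vee \dots}$; the cleanest choice is to take $e$ to be a suitable countable sum so that all the relevant vectors lie in $B_e$. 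Inside the closed sublattice generated, the subsequence $(T_k x_{n_j})_j$ un-converges to some limit $y_k$, and these $y_k$ form a norm-Cauchy sequence (by a similar estimate, using that un-convergence implies $\norm{y} \le \liminf\norm{\cdot}$), hence converge in norm to some $y$. One then checks $Tx_{n_j} \goesun y$ directly from the $3\varepsilon$-type estimate combining $\norm{T-T_k}$ small, $T_k x_{n_j} \goesun y_k$, and $\norm{y_k - y}$ small. This diagonalization-plus-norm-completeness argument is the heart of the proof; the subspace and linearity claims are bookkeeping.
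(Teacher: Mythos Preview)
Your proposal is correct and follows essentially the same route as the paper: a diagonal argument producing a subsequence with $T_k x_{n_j}\goesun y_k$ for each $k$, then the lower-semicontinuity estimate $\norm{y_k-y_m}\le\liminf_j\norm{T_kx_{n_j}-T_mx_{n_j}}\le\norm{T_k-T_m}$ to get $y_k\goesnorm y$, and finally a $3\varepsilon$ estimate to conclude $Tx_{n_j}\goesun y$. The $B_e$ localization you introduce is unnecessary---the limits $y_k$ already exist by sequential un-compactness of $T_k$, and it is norm-completeness of $X$ (not any un-completeness inside a sublattice) that furnishes $y$; the paper simply skips the un-Cauchy detour and goes straight to this argument.
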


\begin{proof}
  Linearity is straightforward. To prove closedness, suppose that
  $(T_m)$ is a sequence of sequentially un-compact operators in $L(E,X)$
  and $T_m\goesnorm T$. We will show that $T$ is sequentially
  un-compact.

  Let $(x_n)$ be a sequence in $B_E$. For every $m$, the sequence
  $(T_mx_n)_n$ has a un-convergent subsequence. By a standard diagonal
  argument, we can find a common subsequence for all these
  sequences. Passing to this subsequence, we may assume without loss
  of generality that for every $m$ we have $T_mx_n\goesun y_m$ for
  some $y_m$. Note that
  \begin{displaymath}
    \norm{y_m-y_k}\le\liminf_n\norm{T_mx_n-T_kx_n}\le\norm{T_m-T_k}\to 0,
  \end{displaymath}
  so that the sequence $(y_m)$ is Cauchy and, therefore, $y_m\goesnorm
  y$ for some $y\in X$.

  Fix $u\in X_+$ and $\varepsilon>0$. Find $m_0$ such that
  $\norm{T_{m_0}-T}<\varepsilon$ and
  $\norm{y_{m_0}-y}<\varepsilon$. Find $n_0$ such that
  \begin{math}
    \bignorm{\abs{T_{m_0}x_n-y_{m_0}}\wedge u}<\varepsilon
  \end{math}
  whenever $n\ge n_0$. It follows from
  \begin{displaymath}
    \abs{Tx_n-y}\wedge u\le\abs{Tx_n-T_{m_0}x_n}+
     \abs{T_{m_0}x_n-y_{m_0}}\wedge u+\abs{y_{m_0}-y}
  \end{displaymath}
  that
  \begin{math}
    \bignorm{\abs{Tx_n-y}\wedge u}<3\varepsilon,
  \end{math}
  so that $Tx_n\goesun y$.
\end{proof}

We do not know whether the set of all un-compact operators is closed.

It is easy to see that if we multiply a (sequentially) un-compact
operator by another bounded operator on the right, the product is
again (sequentially) un-compact. The following example shows that this
fails when we multiply on the left.

\begin{example}\label{Radem}
  \emph{The class of all (sequentially) un-compact operators is not a
    left ideal.} Let $T\colon\ell_1\to L_1$ be defined via
  $Te_n=r_n^+$, where $(e_n)$ is the standard unit basis of $\ell_1$
  and $(r_n)$ is the Rademacher sequence in $L_1$. Note that $T$ is
  neither un-compact nor sequentially un-compact because the sequence
  $(Te_n)$ has no un-convergent subsequences. On the other hand,
  $T=TI_{\ell_1}$, where $I_{\ell_1}$ is the identity operator on
  $\ell_1$. Observe that $I_{\ell_1}$ is un-compact by
  \Cref{un-comp-op}\eqref{un-comp-op-atom}.
\end{example}

\begin{proposition}
  In the diagram $E\xrightarrow{T}X\xrightarrow{S}Y$, suppose that $T$
  is (sequentially) un-compact and $S$ is a lattice homomorphism. If the
  ideal generated by $\Range S$ is dense in $Y$ then $ST$ is
  (sequentially) un-compact.
\end{proposition}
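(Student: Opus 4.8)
The plan is to isolate the real content as a single statement: under the density hypothesis the lattice homomorphism $S$ is continuous from the un-topology of $X$ to the un-topology of $Y$, i.e.\ $w_\alpha\goesun 0$ in $X$ implies $Sw_\alpha\goesun 0$ in $Y$. Granting this, the proposition follows at once from the net/sequence description of un-compactness recalled at the beginning of this section. Indeed, let $(x_\alpha)$ be a bounded net in $E$. Since $T$ is un-compact, $(Tx_\alpha)$ has a subnet $(Tx_{\alpha_\beta})$ with $Tx_{\alpha_\beta}\goesun z$ for some $z\in X$; then $STx_{\alpha_\beta}\goesun Sz$, so the image of $(x_\alpha)$ under $ST$ has a un-convergent subnet, and $ST$ is un-compact. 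The sequentially un-compact case is word-for-word the same with subsequences replacing subnets.

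It remains to prove the displayed implication. First I would reduce to positive nets: since $S$ is a lattice homomorphism, $\abs{Sw_\alpha}=S\abs{w_\alpha}$, and since $w_\alpha\goesun 0\iff\abs{w_\alpha}\goesun 0$ (in $X$, and likewise in $Y$), it suffices to show that $p_\alpha\goesun 0$ in $X$ implies $Sp_\alpha\goesun 0$ in $Y$ whenever $p_\alpha\ge 0$; note that then $Sp_\alpha\ge 0$ as well. Fix $u\in Y_+$; we must show $\norm{Sp_\alpha\wedge u}\to 0$. Suppose first that $u$ lies in the ideal $I$ generated by $\Range S$. Since $S$ is a positive operator, this means $0\le u\le Sz$ for some $z\in X_+$, whence, using that $S$ is a lattice homomorphism,
\begin{displaymath}
  0\le Sp_\alpha\wedge u\le Sp_\alpha\wedge Sz=S(p_\alpha\wedge z).
\end{displaymath}
Since $p_\alpha\goesun 0$, we have $p_\alpha\wedge z\goesnorm 0$ in $X$, so, $S$ being bounded, $S(p_\alpha\wedge z)\goesnorm 0$, and therefore $\norm{Sp_\alpha\wedge u}\to 0$.

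Now let $u\in Y_+$ be arbitrary and fix $\varepsilon>0$. Since $I$ is norm dense and is a sublattice of $Y$, we may pick $v\in I$ with $\norm{u-v}<\varepsilon$, and, replacing $v$ by $v^{+}\in I$, we may further assume $v\ge 0$ (this keeps the estimate, since $\abs{u-v^{+}}\le\abs{u-v}$ as $u\ge 0$). From the elementary inequality $a\wedge u\le a\wedge v+\abs{u-v}$, valid for $a\ge 0$, we get
\begin{displaymath}
  \norm{Sp_\alpha\wedge u}\le\norm{Sp_\alpha\wedge v}+\norm{u-v}<\norm{Sp_\alpha\wedge v}+\varepsilon,
\end{displaymath}
and the previous paragraph gives $\norm{Sp_\alpha\wedge v}\to 0$. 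Hence $\limsup_\alpha\norm{Sp_\alpha\wedge u}\le\varepsilon$, and since $\varepsilon>0$ was arbitrary, $Sp_\alpha\wedge u\goesnorm 0$. Thus $Sp_\alpha\goesun 0$, as required.

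The step I expect to require the most care is the passage from the special case $u\in I$ to a general $u\in Y_+$: one must exploit that $I$ is a \emph{sublattice} (so the approximant can be taken positive and still inside $I$) together with the inequality $a\wedge u\le a\wedge v+\abs{u-v}$. Everything else — the reduction to positive nets, the description $u\le Sz$ of positive elements of $I$, and the identity $S(p_\alpha\wedge z)=Sp_\alpha\wedge Sz$ — is routine once the lattice-homomorphism and boundedness properties of $S$ are in hand.
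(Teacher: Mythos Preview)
Your proof is correct and follows essentially the same approach as the paper: show that $S$ carries un-convergence in $X$ to un-convergence in $Y$, first for test vectors $u$ dominated by something in $\Range S$ (using the lattice-homomorphism identity $S(p\wedge z)=Sp\wedge Sz$), then for arbitrary $u\in Y_+$ by a density-and-approximation step. The only difference is packaging: the paper shows $STh_{n_k}\goesun Sx$ in the sublattice $Z=\Range S$ and then invokes \Cref{sublat}\eqref{sublat-maj} and~\eqref{sublat-dense} to pass to $Y$, whereas you inline those two steps directly.
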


\begin{proof}
  We will prove the statement for the sequential case; the other case
  is analogous. Let $(h_n)$ be a norm bounded sequence in $E$. By
  assumption, there is a subsequence $(h_{n_k})$ such that
  $Th_{n_k}\goesun x$ for some $x\in X$. Let $Z=\Range S$; it is a
  sublattice of $Y$. Fix $u\in Z_+$. Then $u=Sv$ for some $v\in
  X_+$, and $\abs{Th_{n_k}-x}\wedge v\goesnorm 0$. Applying $S$, we
  get $\bigabs{STh_{n_k}-Sy}\wedge u\goesnorm 0$. Therefore,
  $STh_{n_k}\goesun Sx$ in $Z$. It follows from
  \Cref{sublat}\eqref{sublat-maj} and~\eqref{sublat-dense} that
  $STh_{n_k}\goesun Sx$ in $Y$.
\end{proof}

\begin{example}
  \emph{The set of all sequentially un-compact operators is not order
    closed.} Let $T$ be as in \Cref{Radem}. Let $T_n=TP_n$, where
  $P_n$ is the $n$-th basis projection on $\ell_1$, i.e.,
  $T_nh=\sum_{i=1}^nh_ir_i^+$ for $h\in\ell_1$. It is easy to see that
  each $T_n$ is finite rank and, therefore, sequentially
  un-compact. Note that $T_n\uparrow T$, yet $T$ is not sequentially
  un-compact.
\end{example}

\begin{proposition}
  Suppose that for every sequence $(T_n)$ of sequentially un-compact
  operators in $L(c_0,X)$, $T_n\uparrow T$ implies that $T$ is
  sequentially un-compact. Then $X$ is a KB-space.
\end{proposition}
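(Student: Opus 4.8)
The plan is to prove the contrapositive: assuming $X$ is not a KB-space, I will exhibit a sequence $(T_n)$ of sequentially un-compact operators in $L(c_0,X)$ with $T_n\uparrow T$ while $T$ is not sequentially un-compact. Recall that a Banach lattice fails to be a KB-space precisely when it contains a lattice copy of $c_0$ (see the proof of \Cref{BX-complete}). So fix a lattice isomorphism $J\colon c_0\to X$ onto a closed sublattice, and put $x_n=Je_n$, where $(e_n)$ is the standard unit basis of $c_0$. Then $(x_n)$ is a positive disjoint sequence, and since $J$ is a topological embedding, $c:=\inf_n\norm{x_n}>0$. For $h\in c_0$ we have $h=\lim_n\sum_{i=1}^nh_ie_i$ in norm, hence $Jh=\sum_{i=1}^\infty h_ix_i$.

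Let $P_n$ be the $n$-th basis projection on $c_0$, so $P_nh=\sum_{i=1}^nh_ie_i$; put $T=J$ and $T_n=TP_n$, so $T_nh=\sum_{i=1}^nh_ix_i$. Each $T_n$ is finite rank, hence compact, hence sequentially un-compact. The next step is to verify that $T_n\uparrow T$ in $L(c_0,X)$. Monotonicity is immediate, since $(T_{n+1}-T_n)h=h_{n+1}x_{n+1}\ge 0$ for $h\ge 0$; and $T$ is an upper bound, since $(T-T_n)h=J(h-P_nh)\ge 0$ for $h\ge 0$. For the least-upper-bound property, suppose $S\in L(c_0,X)$ satisfies $S\ge T_n$ for every $n$. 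Then for each $h\in(c_0)_+$ the vector $Sh$ dominates every partial sum $\sum_{i=1}^nh_ix_i$; these partial sums increase and converge in norm to $Th$, so \Cref{monot}\eqref{monot-un} gives $\sum_{i=1}^nh_ix_i\uparrow Th$, and therefore $Sh\ge Th$. Hence $S\ge T$, so $T=\sup_nT_n$.

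It remains to show that $T$ is not sequentially un-compact. Consider the norm-bounded sequence $h^{(n)}=e_1+\dots+e_n\in B_{c_0}$; its image is $s_n:=Th^{(n)}=\sum_{i=1}^nx_i$, an increasing positive sequence. If some subsequence $(s_{n_k})$ were un-convergent, then by \Cref{monot}\eqref{monot-un} it would converge in norm, hence be norm Cauchy; but disjointness of the $x_i$ yields $s_{n_{k+1}}-s_{n_k}=\sum_{i=n_k+1}^{n_{k+1}}x_i\ge x_{n_k+1}\ge 0$, so $\norm{s_{n_{k+1}}-s_{n_k}}\ge\norm{x_{n_k+1}}\ge c>0$, a contradiction. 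Thus $(T_n)$ is a sequence of sequentially un-compact operators in $L(c_0,X)$ with $T_n\uparrow T$, yet $T$ is not sequentially un-compact; this contradicts the hypothesis, so $X$ must be a KB-space.

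The only mildly delicate point is the order-theoretic identification $T=\sup_nT_n$ in $L(c_0,X)$ — as opposed to mere strong-operator or pointwise convergence — and this is handled by the monotone–norm-convergence fact recorded in \Cref{monot}\eqref{monot-un}. Everything else (the $c_0$-sublattice dichotomy, finite-rank operators being un-compact, and the disjointness estimate) is routine or already available in the excerpt.
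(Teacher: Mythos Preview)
Your proof is correct and follows the same approach as the paper: argue the contrapositive, use the lattice embedding $T\colon c_0\to X$ together with the finite-rank truncations $T_n=TP_n$, and test sequential un-compactness of $T$ on the partial sums $e_1+\dots+e_n$, invoking \Cref{monot}\eqref{monot-un} to reduce un-convergence of an increasing sequence to norm convergence. The only differences are cosmetic: you spell out the supremum verification for $T_n\uparrow T$ and obtain the non-norm-convergence via the disjointness lower bound $\norm{x_{n_k+1}}\ge c$, whereas the paper simply cites that $T$ is an isomorphism.
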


\begin{proof}
  Suppose not. Then there is a lattice isomorphism $T\colon c_0\to
  X$. Put $x_n=Te_n$, where $(e_n)$ is the standard unit basis of
  $c_0$. Put $T_n=TP_n$, where $P_n$ is the $n$-th basis projection on
  $c_0$, i.e., $T_nh=\sum_{i=1}^nh_ix_i$ for $h\in c_0$. It follows
  that $T_nh\goesnorm Th$, so that $T_nh\uparrow Th$ for every $h\ge
  0$ and, therefore, $T_n\uparrow T$. For each $n$, $T_n$ has finite
  rank and, therefore, is sequentially un-compact.

  We claim that, nevertheless, $T$ is not sequentially un-compact. Put
  $w_n=e_1+\dots+e_n$ in $c_0$. Note that $(w_n)$ is norm bounded and
  $Tw_n=x_1+\dots+x_n$. Since $T$ is an isomorphism, $(Tw_n)$ is not
  norm-convergent. Since $(Tw_n)$ is increasing, it is not
  un-convergent by \Cref{monot}\eqref{monot-un}. Similarly, no
  subsequence of $(Tw_n)$ is un-convergent.
\end{proof}

We do not know whether the converse is true.

Next, we study whether un-compactness is inherited under
domination. The following example shows that, in general, the answer
is negative.

\begin{example}
  Let $T$ be as in \Cref{Radem}. Let $S\colon\ell_1\to L_1$ be defined
  via $Se_n=\one$. Then $S$ is a rank-one operator; hence it is
  compact and un-compact. Clearly, $0\le T\le S$. Yet $T$ is not
  un-compact.
\end{example}

\begin{proposition}
  Suppose that $S,T\colon E\to X$, $0\le S\le T$, $X$ is a KB-space
  and $T$ is a lattice homomorphism. If $T$ is (sequentially)
  un-compact then so is $S$.
\end{proposition}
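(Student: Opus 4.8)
The plan is to derive a pointwise domination of moduli, $\abs{Se}\le\abs{Te}$ for every $e\in E$, and then feed it into the characterization of un-compactness together with the un-completeness of $B_X$ from \Cref{BX-complete}; here $E$ is understood to be a Banach lattice, since otherwise the hypothesis $0\le S\le T$ is meaningless. First I would verify the domination: for $e\in E$, positivity of $S$ gives $\abs{Se}=\abs{Se^+-Se^-}\le Se^++Se^-=S\abs{e}$, then $0\le S\le T$ gives $S\abs{e}\le T\abs{e}$, and finally $T\abs{e}=\abs{Te}$ since $T$ is a lattice homomorphism. Applying this with $e=h-h'$ yields $\abs{Sh-Sh'}\le\abs{Th-Th'}$ for all $h,h'\in E$.

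Next I would take a norm bounded sequence $(h_n)$ in $E$ (the case of a bounded net is identical, with ``subnet'' in place of ``subsequence''). Since $T$ is sequentially un-compact, $(Th_n)$ has an un-convergent subsequence; relabeling, we may assume $Th_n\goesun y$ for some $y\in X$, so that $(Th_n)$ is un-Cauchy. By the domination above, for every $u\in X_+$,
\[
  \bigabs{Sh_m-Sh_n}\wedge u\le\bigabs{Th_m-Th_n}\wedge u,
\]
whence $\bignorm{\bigabs{Sh_m-Sh_n}\wedge u}\le\bignorm{\bigabs{Th_m-Th_n}\wedge u}\to 0$ as $m,n\to\infty$. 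Thus $(Sh_n)$ is un-Cauchy, and it is norm bounded because $\norm{Sh_n}\le\norm{S}\sup_k\norm{h_k}$.

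Finally, a KB-space is order continuous, so \Cref{BX-complete} applies: $B_X$ is un-complete, equivalently every norm bounded un-Cauchy net in $X$ is un-convergent. Hence $(Sh_n)$ un-converges in $X$, and therefore $S$ is sequentially un-compact (respectively un-compact). I do not expect a genuine obstacle: the only step carrying real content is the identity $T\abs{e}=\abs{Te}$, which is exactly where the lattice-homomorphism hypothesis on $T$ is used, together with the appeal to \Cref{BX-complete}; the rest is routine.
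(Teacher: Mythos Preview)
Your proof is correct and follows essentially the same route as the paper: derive $\abs{Sh-Sh'}\le S\abs{h-h'}\le T\abs{h-h'}=\abs{Th-Th'}$ from positivity of $S$, the domination $0\le S\le T$, and the lattice-homomorphism property of $T$, conclude that $(Sh_n)$ is un-Cauchy whenever $(Th_n)$ is, and then invoke \Cref{BX-complete}. Your remark that $E$ must be a Banach lattice is well taken and is implicit in the paper's hypotheses.
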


\begin{proof}
 We will prove the sequential case; the other case is similar.
  Let $(h_n)$ be a bounded sequence in $E$. Passing to a subsequence,
  we may assume that $(Th_n)$ is un-convergent. In particular, it is
  un-Cauchy. Fix $u\in X_+$. Note that
  \begin{displaymath}
    \abs{Sh_n-Sh_m}\wedge u
    \le\bigl(S\abs{h_n-h_m}\bigr)\wedge u
    \le\bigl(T\abs{h_n-h_m}\bigr)\wedge u
    =\abs{Th_n-Th_m}\wedge u\goesnorm 0
  \end{displaymath}
  as $n,m\to\infty$. It follows that $(Sh_n)$ is un-Cauchy and,
  therefore, un-converges by \Cref{BX-complete}.
\end{proof}

We would like to mention that the class of un-compact operators is
different from several other known classes of operators. We already
mentioned that every compact operator is un-compact. The converse is
false as the identity operator on any infinite-dimensional atomic
KB-space is un-compact but not compact.

Recall that an operator between Banach lattices is AM-compact if it
maps order intervals to relatively compact sets.

\begin{proposition}
  Every order bounded un-compact operator is AM-compact.
\end{proposition}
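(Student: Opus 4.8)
The plan is to show that an order bounded operator $T$ sends an arbitrary order interval $[-v,v]\subseteq E_+$ to a set whose un-closure is un-compact \emph{and} whose members live inside a single order interval of $X$, so that on that image the un-topology coincides with the norm topology; norm relative compactness then follows. First I would use order boundedness: there is $w\in X_+$ with $T([-v,v])\subseteq[-w,w]$. Next, since $T$ is un-compact, $T(B_E)$ is relatively un-compact; as $[-v,v]$ is a bounded subset of $B_E$ scaled appropriately (say $[-v,v]\subseteq \|v\|B_E$), the set $A:=T([-v,v])$ is relatively un-compact as well, so $\overline{A}^{\,\mathrm{un}}$ is un-compact. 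Because order intervals are un-closed (stated in the preliminaries), $\overline{A}^{\,\mathrm{un}}$ is still contained in $[-w,w]$.

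The key observation is that on the order interval $[-w,w]$ the un-topology agrees with the norm topology: this is exactly the fact recorded early in the paper that for order bounded nets un-convergence agrees with norm convergence, so the relative un-topology and the relative norm topology on $[-w,w]$ coincide. Hence $\overline{A}^{\,\mathrm{un}}$, being an un-compact subset of $[-w,w]$, is norm compact; consequently $A=T([-v,v])$ is relatively norm compact. Since $v\in E_+$ was arbitrary (and a general order interval $[a,b]$ is a translate of $[0,b-a]\subseteq[-(b-a),b-a]$, with translation a norm homeomorphism), $T$ maps every order interval to a relatively norm compact set, i.e.\ $T$ is AM-compact.

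I do not anticipate a serious obstacle here; the only point requiring a little care is making sure the un-closure of the image stays inside the order interval $[-w,w]$, which is handled by the already-quoted fact that order intervals are un-closed. One should also be slightly careful that "relatively un-compact" for $T$ is defined via $T(B_E)$, so to transfer it to $T([-v,v])$ one uses that $[-v,v]$ is norm bounded and that a subset of a relatively un-compact set is relatively un-compact (its un-closure sits inside the un-compact set $\overline{T(B_E)}^{\,\mathrm{un}}$ after rescaling, hence is un-compact as a closed subset of a compact set).
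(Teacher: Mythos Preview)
Your proposal is correct and follows essentially the same route as the paper's proof: use order boundedness to trap $T[a,b]$ in an order interval, use un-compactness to trap it in a un-compact set, intersect (or take the un-closure, which stays in the interval since intervals are un-closed), and then invoke the fact that on order intervals un-convergence coincides with norm convergence to conclude norm relative compactness. The only cosmetic difference is that the paper works directly with $C\cap[c,d]$ rather than the un-closure of the image, and you have a small typo ($[-v,v]\subseteq E_+$ should be $[-v,v]\subseteq E$).
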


\begin{proof}
  Let $T\colon X\to Y$ be an order bounded un-compact operator between
  Banach lattices. Fix an order interval $[a,b]$ in $X$. Since $T$ is
  un-compact, $T[a,b]\subseteq C$ for some un-compact set $C$. Since
  $T$ is order bounded, $T[a,b]\subseteq[c,d]$ for some $c,d\in
  Y$. Note that $[c,d]$ is un-closed, hence $C\cap[c,d]$ is un-compact
  and, being order bounded, is compact. It follows that $T[a,b]$ is
  relatively compact.
\end{proof}

Note that the converse is false: the identity operator on $c_0$ is
AM-compact but not un-compact.

The identity operator on $\ell_1$ is un-compact, yet it is neither
L-weakly compact nor M-weakly compact.

Finally, we note that if $T$ is sequentially un-compact and
semi-compact then $T$ is compact. Indeed, let $(h_n)$ be a bounded
sequence in $E$. There is a subsequence $(h_{n_k})$ such that
$Th_{n_k}\goesun x$ for some $x\in X$. Since $T$ is semi-compact, the
sequence $(Th_{n_k})$ is almost order bounded and, therefore,
$Th_{n_k}\goesnorm x$ by \cite[Lemma~2.9]{DOT}.

Finally, we discuss when weakly compact operators are un-compact.

\begin{lemma}\label{w-un-lim}
  If $x_n\goesw x$ and $x_n\goesun y$ then $x=y$.
\end{lemma}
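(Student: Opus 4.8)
The plan is to show that the weak limit and the un-limit of a sequence must coincide by exploiting a subsequence with good uo-behaviour. First I would recall the fact, quoted in the introduction (Proposition~4.1 of~\cite{DOT}), that any un-null sequence in a Banach lattice has a uo-null subsequence. Applying this to $x_n-y$, which is un-null, yields a subsequence $(x_{n_k})$ with $x_{n_k}-y\goesuo 0$, i.e.\ $x_{n_k}\goesuo y$.

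Next I would invoke the known relationship between uo- and weak convergence. Since $(x_{n_k})$ is weakly convergent to $x$, it is in particular norm bounded; and a norm bounded sequence that is uo-convergent to $y$ has the property that its weak limit, if it exists, must equal $y$. Concretely, one can use that for a norm bounded uo-null sequence $(z_k)$ in a Banach lattice, every weak-cluster point is zero: this follows, for instance, from the fact that a uo-null norm bounded sequence has a subsequence whose Ces\`aro means are norm-null (or one may cite \cite{Gao:14} or \cite{GaoX:14} directly). Thus the weak limit of $(x_{n_k})$ is $y$. But $(x_{n_k})$, being a subsequence of $(x_n)$, also converges weakly to $x$. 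By uniqueness of weak limits, $x=y$.

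The main obstacle is making precise the step ``uo-null and weakly convergent forces the weak limit to be the uo-limit''. If the paper permits citing the uo--weak convergence results of \cite{Gao:14,GaoX:14} this is immediate; otherwise one needs a short self-contained argument. A clean route: pass to the subsequence $(x_{n_k})$ with $x_{n_k}-y\goesuo 0$, set $z_k=x_{n_k}-y$, so $z_k\goesw x-y$ and $z_k\goesuo 0$. Since $(z_k)$ is norm bounded and uo-null, by \cite[Corollary~3.6]{GTX} (disjoint sequences are uo-null) combined with a disjointification, or directly by the Koml\'os-type theorem, some sequence of convex combinations of the $z_k$ is norm-null; such convex combinations still converge weakly to $x-y$, forcing $x-y=0$. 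Either way, the conclusion is $x=y$, which completes the proof.

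\begin{proof}
  Since $x_n\goesun y$, the sequence $(x_n-y)$ is un-null, so by
  Proposition~4.1 of~\cite{DOT} it has a uo-null subsequence; that is,
  there is a subsequence $(x_{n_k})$ with $x_{n_k}\goesuo y$. As
  $(x_{n_k})$ is weakly convergent, it is norm bounded. Put
  $z_k=x_{n_k}-y$; then $(z_k)$ is norm bounded, uo-null, and
  $z_k\goesw x-y$. By the relationship between uo- and weak
  convergence for norm bounded sequences (see \cite{GaoX:14,Gao:14}),
  a norm bounded uo-null sequence admits a subsequence whose Ces\`aro
  means converge to zero in norm; since these Ces\`aro means still
  converge weakly to $x-y$, we conclude $x-y=0$, i.e.\ $x=y$.
\end{proof}
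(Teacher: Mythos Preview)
Your argument has a genuine gap at the key step. You assert that ``a norm bounded uo-null sequence admits a subsequence whose Ces\`aro means converge to zero in norm,'' attributing this to \cite{GaoX:14,Gao:14}. This statement is false, and neither reference contains it. For a concrete counterexample, take $z_k=k\chi_{[0,1/k]}$ in $L_1[0,1]$: the sequence is norm bounded, converges to zero a.e.\ (hence is uo-null), yet every convex combination $\sum_j\lambda_jz_{k_j}$ with $\lambda_j\ge 0$, $\sum_j\lambda_j=1$ satisfies $\bignorm{\sum_j\lambda_jz_{k_j}}_1=\sum_j\lambda_j\int z_{k_j}=1$, so no sequence of Ces\`aro means (of any subsequence) is norm-null. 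The Koml\'os-type results in \cite{GTX,GaoX:14} give only \emph{uo}-convergence of Ces\`aro means, which would make your argument circular. You might object that in your situation $(z_k)$ is additionally weakly convergent, but you do not use this in the claim as stated, the cited references still do not supply the needed statement, and proving it under that extra hypothesis amounts to proving the lemma itself.

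The paper's proof avoids the detour through uo-convergence entirely. It applies \Cref{KP} (Theorem~3.2 of \cite{DOT}) directly to the un-null sequence $(x_n-y)$ to obtain a subsequence $(x_{n_k})$ and a \emph{disjoint} sequence $(d_k)$ with $x_{n_k}-d_k\goesnorm 0$; then $d_k\goesw x-y$, and a weakly convergent disjoint sequence in a Banach lattice is weakly null by \cite[Theorem~4.34]{Aliprantis:06}, whence $x=y$. The moral is that the disjointification available for un-null nets via \Cref{KP} is exactly the right tool here; passing first to a uo-null subsequence discards this structure and leaves you needing an unavailable Ces\`aro-type fact.
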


\begin{proof}
  Without loss of generality, $y=0$. By \Cref{KP}, there
  exist a subsequence $(x_{n_k})$ and a disjoint sequence $(d_k)$
  such that $x_{n_k}-d_k\goesnorm 0$. It follows that
  $x_{n_k}-d_k\goesw 0$, so that $d_k\goesw x$. Now
  \cite[Theorem~4.34]{Aliprantis:06} yields $x=0$.
\end{proof}

\begin{theorem}
  A Banach lattice $X$ is atomic and order continuous iff $T$ is
  sequentially un-compact for every Banach space $E$ and every weakly
  compact operator $T\colon E\to X$.
\end{theorem}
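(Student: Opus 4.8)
The plan is to prove both implications, the forward one being short and the converse being handled by contraposition with two sub-cases.

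\emph{Forward implication.} Suppose $X$ is atomic and order continuous. By \Cref{w-un}, $x_\alpha\goesw 0$ implies $x_\alpha\goesun 0$ for every net, hence $x_\alpha\goesw x$ implies $x_\alpha\goesun x$. Given a Banach space $E$, a weakly compact operator $T\colon E\to X$, and a bounded sequence $(h_n)$ in $E$, the Eberlein--\v{S}mulian theorem yields a subsequence with $Th_{n_k}\goesw x$ for some $x\in X$; then $Th_{n_k}\goesun x$. Thus $T$ is sequentially un-compact.

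\emph{Converse, reduction step.} Assume $X$ is not atomic or not order continuous; I will produce a weakly compact operator into $X$ that is not sequentially un-compact. The key observation is: if $W\subseteq X$ is relatively weakly compact and $(x_n)$ is a sequence in $W$, then $T\colon\ell_1\to X$, $Ta=\sum_n a_nx_n$ (well defined since $(x_n)$ is bounded), is weakly compact, because $TB_{\ell_1}$ lies in the closed convex hull of the relatively weakly compact set $\{\pm x_n\mid n\in\mathbb N\}\cup\{0\}$, and such a hull is relatively weakly compact by the Krein--\v{S}mulian theorem. Applying the hypothesis to this $T$ and the bounded sequence $(e_n)$ in $\ell_1$, the sequence $(x_n)=(Te_n)$ has a un-convergent subsequence. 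So it suffices to exhibit a bounded sequence lying in some relatively weakly compact subset of $X$ with no un-convergent subsequence.

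\emph{Case 1: $X$ is not order continuous.} By \cite[Theorem~4.14]{Aliprantis:06} there is a disjoint sequence $(y_n)$ with $0\le y_n\le u$ for some $u\in X_+$ that is not norm null; after passing to a subsequence, $\|y_n\|\ge\delta>0$. First, $(y_n)$ is weakly null: for $\varphi\in X^*_+$ one has $\sum_{n\le N}\varphi(y_n)=\varphi(y_1\vee\dots\vee y_N)\le\varphi(u)$ for every $N$, so $\varphi(y_n)\to 0$, and the general case follows by splitting $\varphi=\varphi^+-\varphi^-$; hence $\{y_n\}\cup\{0\}$ is relatively weakly compact. Second, $(y_n)$ has no un-convergent subsequence: if $y_{n_k}\goesun z$ then $z\ge 0$, and for each fixed $m$ the net $(y_{n_k}\wedge y_m)_k$ is eventually zero by disjointness while it un-converges to $z\wedge y_m$ by un-continuity of lattice operations, so $z\wedge y_m=0$; therefore $\abs{y_{n_k}-z}=y_{n_k}+z\ge z$, giving $\abs{y_{n_k}-z}\wedge z=z$ for all $k$, while $y_{n_k}\goesun z$ forces $\bignorm{\abs{y_{n_k}-z}\wedge z}\to 0$, so $z=0$. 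But then $\|y_{n_k}\wedge u\|=\|y_{n_k}\|\ge\delta$ contradicts $y_{n_k}\goesun 0$.

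\emph{Case 2: $X$ is order continuous but not atomic.} Then there is $e\in X_+$ with the band $B_e$ non-atomic; $B_e$ is an order continuous Banach lattice with weak unit $e$, and in an AL-representation $L_1(\mu)$ of $(B_e,e)$ the measure $\mu$ is non-atomic (a $\mu$-atom would give an atom of $X$). Choose independent measurable sets $A_n$ with $\mu(A_n)=\tfrac12$ and put $g_n=\chi_{A_n}$, viewed in $L_\infty(\mu)\subseteq B_e\subseteq X$; then $(g_n)\subseteq[0,e]$, which is weakly compact since $X$ is order continuous, so $\{g_n\}$ is relatively weakly compact. If $g_{n_k}\goesun z$ in $X$, then $z\in B_e$ because bands are un-closed (\Cref{band-closed}), so $g_{n_k}\goesun z$ in $B_e$, which by the identification of un-convergence in $B_e$ with convergence in measure in $L_1(\mu)$ gives $g_{n_k}\goesmu z$; but $\mu\bigl(\{\abs{g_{n_k}-g_{n_{k+1}}}=1\}\bigr)=\tfrac12$ by independence, so $(g_{n_k})$ is not Cauchy in measure, a contradiction. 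In both cases the hypothesis fails, so $X$ must be atomic and order continuous. The crux of the argument is this converse direction: rather than detecting non-order-continuity or non-atomicity directly, one reduces to producing a relatively weakly compact sequence with no un-convergent subsequence, and then locates the right such sequence — an order bounded disjoint one (automatically weakly null, which is what makes the associated $\ell_1$-operator weakly compact) in the first case, and a sequence of independent indicator functions inside a non-atomic band in the second.
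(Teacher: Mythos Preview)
Your proof is correct, but the converse direction takes a genuinely different route from the paper's.

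The paper proves the converse \emph{positively}: given the hypothesis, it shows directly that every weakly null sequence $(x_n)$ in $X$ is un-null, and then invokes \Cref{w-un} to conclude that $X$ is atomic and order continuous. Concretely, one defines $T\colon\ell_1\to X$ by $Te_n=x_n$ (weakly compact since $(x_n)$ is weakly null), obtains a un-convergent subsequence $x_{n_k}\goesun x$ from the hypothesis, and then appeals to \Cref{w-un-lim} to force $x=0$; the subsequence trick finishes. You instead argue by contraposition and build explicit counterexamples in each bad case: an order bounded disjoint sequence when $X$ fails order continuity, and independent indicators inside a non-atomic band when $X$ is order continuous but not atomic. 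Both arguments share the $\ell_1$-operator device, but you use it to manufacture a bad operator rather than to test an arbitrary weakly null sequence.

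What each approach buys: the paper's argument is shorter and cleaner, but it leans on \Cref{w-un-lim} (uniqueness of simultaneous weak/un limits) and on the implication \eqref{w-un-seq}$\Rightarrow$\eqref{w-un-aoc} of \Cref{w-un}, which is imported from~\cite{DOT}. Your argument is longer but more self-contained and constructive; in particular it bypasses \Cref{w-un-lim} entirely, and in effect re-derives the relevant direction of \Cref{w-un} by hand. The only point worth flagging is that in Case~2 you use the existence of an independent sequence of sets of measure $\tfrac12$ in a non-atomic probability space; this is standard (a dyadic-tree construction using the intermediate-value property of non-atomic measures), but you might want to cite or sketch it.
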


\begin{proof}
  The forward implication follows immediately from \Cref{w-un}.  To
  prove the converse, let $(x_n)$ be a weakly null sequence in $X$. By
  \Cref{w-un}, it suffices to show that $x_n\goesun 0$. Define
  $T\colon\ell_1\to X$ via $Te_n=x_n$. By
  \cite[Theorem~5.26]{Aliprantis:06}, $T$ is weakly compact. By
  assumption, $T$ is sequentially un-compact. It follows that $(Te_n)$
  has a un-convergent subsequence, i.e., $x_{n_k}\goesun x$ for some
  $x\in X$ and a subsequence $(x_{n_k})$. \Cref{w-un-lim} yields
  $x=0$. By the same argument, every subsequence of $(x_n)$ has a
  further subsequence which is un-null; since un-convergence is
  topological, it follows that $x_n\goesun 0$.
\end{proof}

\begin{corollary}
  Every operator from a reflexive Banach space to an atomic order
  continuous Banach lattice is sequentially un-compact.
\end{corollary}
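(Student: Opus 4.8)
The plan is to reduce the statement to the theorem immediately preceding it, which characterizes atomic order continuous Banach lattices as exactly those $X$ for which every weakly compact operator into $X$ (from an arbitrary Banach space) is sequentially un-compact. So the only thing to check is that an operator out of a reflexive space is automatically weakly compact.

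First I would recall the standard fact that a Banach space $E$ is reflexive iff its closed unit ball $B_E$ is weakly compact (this is the Kakutani characterization of reflexivity; see, e.g., the references on Banach spaces already cited in the paper). Consequently, for any bounded operator $T\colon E\to X$, the set $T(B_E)$ is the image of a weakly compact set under a weak-to-weak continuous map, hence relatively weakly compact; that is, $T$ is weakly compact by definition.

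Now I would apply the preceding theorem: since $X$ is assumed atomic and order continuous, every weakly compact operator from any Banach space into $X$ is sequentially un-compact. In particular the operator $T\colon E\to X$ is sequentially un-compact, which is exactly the assertion.

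There is no real obstacle here — the corollary is essentially immediate once one notes the weak-compactness of operators out of reflexive spaces. If anything, the only point worth stating carefully is that reflexivity of $E$ (and not of $X$) is what is needed, and that atomicity plus order continuity of $X$ is precisely the hypothesis under which the preceding theorem converts weak compactness of $T$ into sequential un-compactness.
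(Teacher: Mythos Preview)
Your proposal is correct and is exactly the intended argument: the paper states the corollary without proof because it follows immediately from the preceding theorem together with the standard fact that every bounded operator out of a reflexive Banach space is weakly compact. There is nothing to add.
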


\bigskip

\subsection*{Acknowledgement and further remarks.}
Most of the work on this paper was done during a visit of the first
and the second author to the University of Alberta. After the work on
this paper was essentially completed, we learned of recent preprints
\cite{Zab,GLX}. In the former, the author studies \emph{unbounded
  absolute weak} convergence; it is shown there that in certain
situations it agrees with un-convergence. In the latter, techniques
of unbounded convergence are used to study risk measures.

The authors would like to thank the reviewers for valuable comments
and improvements.


\begin{thebibliography}{WWWW}

\bibitem[AA02]{Abramovich:02}
Y.~Abramovich and C.D.~Aliprantis,
\emph{An invitation to operator theory},
Vol. 50. Providence, RI: American Mathematical Society, 2002.

\bibitem[AB06]{Aliprantis:06}
C.D.~Aliprantis and O.~Burkinshaw,
\emph{Positive operators}, 2nd edition, Springer 2006.

\bibitem[AB06a]{Aliprantis:06a}
C.D.~Aliprantis, K. C. Border,
\emph{Infinite Dimensional Analysis: A hitchhiker's guide},
3rd edition, Springer, Berlin, 2006.

\bibitem[CW98]{Chen:98}
Z.L.~Chen and A.W.~Wickstead,
Relative weak compactness of solid hulls in Banach lattices,
\emph{Indag.\ Math.}, 9(2), 1998, 187--196.

\bibitem[Con99]{Conway:90}
J.B. Conway,
\emph{A course in functional analysis},
2nd edition, Springer-Verlag, New York, 1990

\bibitem[DeM64]{DeMarr:64}
R.~DeMarr,
Partially ordered linear spaces and locally convex linear topological
spaces,
\emph{Illinois J.\ Math.} 8, 1964, 601-–606.
    
\bibitem[DOT]{DOT}
Y.~Deng, M.~O'Brien, and V.G.~Troitsky, 
Unbounded norm convergence in Banach lattices,
\emph{Positivity}, to appear. doi:10.1007/s11117-016-0446-9.

\bibitem[EM16]{Emelyanov:16}
E.Yu.~Emelyanov, M.A.A.~Marabeh,
Two measure-free versions of the Brezis-Lieb Lemma,
\emph{Vladikavkaz Math.\ J.}, 18(1), 2016, 21--25.

\bibitem[Fol99]{Folland:99}
G.B.~Folland,
\emph{Real analysis: Modern techniques and their applications},
2nd edition, Pure and Applied Mathematics,
John Wiley \& Sons, Inc., New York, 1999.

\bibitem[GTX]{GTX}
N.~Gao, V.G.~Troitsky, and F.~Xanthos,
Uo-convergence and its applications to Ces\`aro means in Banach lattices,
\emph{Israel J.\ Math.}, to appear. arXiv:1509.07914 [math.FA].

\bibitem[GX14]{GaoX:14}
N.~Gao and F.~Xanthos,
Unbounded order convergence and application to martingales without
probability,
\emph{J.\ Math.\ Anal.\ Appl.}, 415 (2014), 931--947.

\bibitem[Gao14]{Gao:14}
N.~Gao, Unbounded order convergence in dual spaces,
\emph{J.\ Math.\ Anal.\ Appl.}, 419, 2014, 347--354.

\bibitem[GLX]{GLX}
N.~Gao, D.H.~Leung, and F.~Xanthos,
The dual representation problem of risk measures.
Preprint. arXiv:1610.08806 [q-fin.MF]

\bibitem[Hal70]{Halmos:70}
P.R.~Halmos,
\emph{Measure Theory}, Springer-Verlag, New York, 1970.

\bibitem[Kap97]{Kaplan:97}
S.~Kaplan,
On Unbounded Order Convergence, 
\emph{Real Anal.\ Exchange} 23(1), 1997, 175--184.

\bibitem[KN63]{Kelley:63}
J.L.~Kelley and I.~Namioka, \emph{Linear topological spaces.}
Van Nostrand Co., Inc., Princeton, N.J. 1963

\bibitem[LT79]{Lindenstrauss:79}
J.~Lindenstrauss and L.~Tzafriri,
\emph{Classical {B}anach spaces. {I}{I}}, Springer-Verlag, Berlin,
  1979.

\bibitem[MN91]{Meyer-Nieberg:91}
P.~Meyer-Nieberg,
\emph{Banach lattices},
Springer-Verlag, Berlin, 1991.

\bibitem[Nak48]{Nakano:48}
H.~Nakano,
Ergodic theorems in semi-ordered linear spaces,
\emph{Ann.\ of Math.\ (2)}, 49, 1948, 538--556.

\bibitem[Roy88]{Royden:88}
 H.L.~Royden,
\emph{Real analysis}, 3rd ed., Macmillan Publishing Company,
   New York, 1988. 

\bibitem[Sch74]{Schaefer:74}
H.H.~Schaefer,
\emph{Banach lattices and positive operators},
Springer-Verlag, Berlin, 1974.

\bibitem[Tro04]{Troitsky:04}
V.G.~Troitsky,
Measures of non-compactness of operators on Banach lattices,
\emph{Positivity}, 8(2), 2004, 165--178.

\bibitem[Wic77]{Wickstead:77}
A.W.~Wickstead,
Weak and unbounded order convergence in Banach lattices,
\emph{J.\ Austral.\ Math.\ Soc.\ Ser.\ A}, 24(3), 1977, 312--319.

\bibitem[Wnuk99]{Wnuk:99}
W.~Wnuk,
\emph{Banach lattices with order continuous norms},
Polish scientific publishers PWN, Warszawa, 1999.

\bibitem[Wnuk13]{Wnuk:13}
W.~Wnuk, 
On the dual positive Schur property in Banach lattices,
\emph{Positivity}, 17(2013), 759--773.

\bibitem[Zab]{Zab}
O.~Zabeti,
Unbounded absolute weak convergence in Banach lattices,
preprint. arXiv:1608.02151 [math.FA].

\end{thebibliography}
\end{document}